\definecolor{refkey}{gray}{.75}
\definecolor{labelkey}{gray}{.5}
 \definecolor{darkgreen}{rgb}{0,0.4,0}
\definecolor{light}{gray}{0.9}
\newtheorem{theorem}{Theorem}[section]
\newtheorem{proposition}[theorem]{Proposition}
\newtheorem{lemma}[theorem]{Lemma}
\newtheorem{corollary}[theorem]{Corollary}
\newtheorem{claim}[theorem]{Claim}
\newtheorem{definition}[theorem]{Definition}
\newtheorem{remark}[theorem]{Remark}
\numberwithin{equation}{section}
\numberwithin{theorem}{section}
\newtheorem{theoremA}{Theorem}
\newcommand{\mc}[1]{{\mathcal #1}}
\newcommand{\bb}[1]{{\mathbb #1}}
\renewcommand{\epsilon}{\varepsilon}
\renewcommand{\tilde}{\widetilde}
\renewcommand{\hat}{\widehat}
\renewcommand{\div}{\mathop{\rm div}\nolimits}
\newcommand{\Ent}{\mathop{\rm Ent}\nolimits}
\newcommand{\sce}{\mathop{\rm sc^-\!}\nolimits}
\definecolor{light}{gray}{.9}
\newcommand{\cA}{\ensuremath{\mathcal A}}
\newcommand{\cB}{\ensuremath{\mathcal B}}
\newcommand{\cC}{\ensuremath{\mathcal C}}
\newcommand{\cD}{\ensuremath{\mathcal D}}
\newcommand{\cG}{\ensuremath{\mathcal G}}
\newcommand{\cI}{\ensuremath{\mathcal I}}
\newcommand{\cJ}{\ensuremath{\mathcal J}}
\newcommand{\cK}{\ensuremath{\mathcal K}}
\newcommand{\cM}{\ensuremath{\mathcal M}}
\newcommand{\cO}{\ensuremath{\mathcal O}}
\newcommand{\cP}{\ensuremath{\mathcal P}}
\newcommand{\cS}{\ensuremath{\mathcal S}}
\newcommand{\cT}{\ensuremath{\mathcal T}}
\newcommand{\cU}{\ensuremath{\mathcal U}}
\newcommand{\bbE}{{\ensuremath{\mathbb E}} }
\newcommand{\bbP}{{\ensuremath{\mathbb P}} }
\newcommand{\bbR}{{\ensuremath{\mathbb R}} }
\newcommand{\bbX}{{\ensuremath{\mathbb X}} }
\newcommand{\bbZ}{{\ensuremath{\mathbb Z}} }
\let\a=\alpha    \let\d=\delta  \let\e=\varepsilon
 \let\g=\gamma       \let\l=\lambda
  \let\s=\sigma \let\t=\tau   
  \let\z=\zeta
     \let\L=\Lambda
\newcommand{\rosso}{\textcolor{black}}
\newcommand{\blu}{\textcolor{black}}
\newcommand{\re}{\mathbb{R}_+^E}
\title[LDP's  for  periodic Markov chains]{Level 2.5 large deviations  for  continuous time  Markov chains with time periodic rates}
\author[L.\ Bertini]{L. Bertini}
\address{Lorenzo Bertini \hfill\break \indent
   Dipartimento di Matematica, Universit\`a di Roma `La Sapienza'
   \hfill\break \indent
   P.le Aldo Moro 2, 00185 Roma, Italy}
 \email{bertini@mat.uniroma1.it}
\author[R.\ Chetrite]{R. Chetrite}
\address{Raphael Chetrite \hfill\break \indent
  CNRS, Laboratoire J.A. Dieudonn\'e, Universit\'e C{\^o}te d'Azur
   \hfill\break \indent
   06108 Nice Cedex 2, France. \hfill\break \indent
 Department of Physics, Graduate School of Science
 \hfill\break \indent
Kyoto University, Kyoto 606-8502, Japan
}
 \email{rchetrit@unice.fr}
\author[A.\ Faggionato]{A. Faggionato}
\address{Alessandra Faggionato \hfill\break \indent
  Dipartimento di Matematica, Universit\`a di Roma `La Sapienza'
  \hfill\break \indent
  P.le Aldo Moro 2, 00185 Roma, Italy}
\email{faggiona@mat.uniroma1.it}
\author[D.\ Gabrielli]{D. Gabrielli}
\address{Davide Gabrielli \hfill\break \indent
  DISIM, Universit\`a dell'Aquila
  \hfill\break\indent
  Via Vetoio,  Loc. Coppito, 67100 L'Aquila, Italy}
\email{gabriell@univaq.it}
\begin{document}

\begin{abstract}
We consider an irreducible continuous time Markov chain on a finite
state space and with time periodic jump rates and prove the joint large
deviation principle for the empirical measure and flow and the joint large deviation principle for the empirical measure and current. By contraction we get the large deviation principle of three types of  entropy production flow. We derive some Gallavotti-Cohen duality relations  and discuss some applications.

\medskip

\noindent {\em Keywords}:
time periodic Markov chain, large deviation principle,
empirical measure, flow  and current,
Gallavotti-Cohen duality relation.
\medskip

\noindent{\em AMS 2010 Subject Classification}:
60F10,  
60J27, 
82C05.  

\thanks{The work of L. Bertini and  A. Faggionato  has been  supported  by  PRIN
  20155PAWZB ``Large Scale Random Structures", while the  work of R. Chetrite has been supported by the French Ministry of Education grant
   ANR-15-CE40-0020-01.
 }

\end{abstract}
\maketitle
\thispagestyle{empty}

\section{Introduction}


\rosso{Periodically driven Markov processes are a common setup for several small systems, such as artificial molecular motors.
  Unlike their biological
counterparts, artificial molecular motors are non autonomous and  work under the  effect of time periodic externally-modulated
  stimuli
such as temperature, laser   light, chemical environment.  Periodic forcing is fundamental also  in relation to micro-sized engines. For example, experimental heat
engines driven by periodic temperature variations have been realized
in \cite{Blicke,Martinez} and experimental molecular pumps with
periodic modulation of hamiltonian have been realized in \cite{Eelkema,Li}.
Due to their  numerous applications inside nanotechnologies, in the last years
 periodically driven Markov processes have received  much attention in the  thermodynamic theory of small systems or, equivalently, stochastic thermodynamics \cite{Sek,Se}. Several theoretical results have been obtained for what concerns
 linear response and Onsager reciprocity relations \cite{Brandner,Joubaud,Proesmans1,Proesmans2,Ray,Yzumida1,Yzumida3}, and    no-go theorems in  stochastic pumping \cite{Chernyak2,Joubaud,Maes2,Rahav,Rotskoff}.
Time periodic forcing is also at the basis  of stochastic resonance phenomena \cite{GHJM}, i.e of  the amplification of a weak periodic signal by means of noise. }

\rosso{We  consider here  an irreducible continuous time Markov chain on a finite
state space $V$ with time periodic jump rates, having  period $T_0$.  We focus on  large deviations (shortly, LD) at large times of the empirical measure, flow  and current.
Roughly, referring  to a time window $[0,T]$, for each  $y\in V$ the empirical measure $\bar \mu_T(y)$   counts the fraction of time spent by the system  in the state  $y$. For each pair of states $y,z \in V$ the empirical flow $\bar Q_T(y,z)$   counts the number of jumps  from $y$ to $z$  per unit time, while the empirical current $\bar J_T (y,z)$ is given by the difference  $\bar Q_T (y,z)-\bar Q_T (z,y)$. The above objects enter naturally in several applications. Considering for example the dynamics of a  molecular motor for which an ATP hydrolysis takes place simultaneously to a transitions from  state $y$ to state $z$,  $ T Q_T(y,z)$ gives the number of hydrolyzed ATP's in the time window $[0,T]$.
 To have explicit LD functionals, we consider also  extended versions $\mu^{(n)} , Q^{(n)}, J^{(n)}$ of the above empirical measure, flow and current, keeping record of  time $t$ apart integer multiples of the period $T_0$. More precisely,
 $\mu^{(n)}(y,dt)$ is defined as  the time per  period spent   at state $y$  during  the $n$ infinitesimal time intervals  $[t,t+dt)$, $[t+T_0, t+T_0+dt)$,.., $[t+( n-1)T_0, t +(n-1) T_0+dt)$.
$Q^{(n)}(y,z,dt)$   equals  the number, per period,  of jumps   from $y$ to $z$ performed in the above $n$  infinitesimal time intervals, while $J^{(n)}(y,z,dt)$ is defined as the difference $Q^{(n)}(y,z,dt)-Q^{(n)}(z,y,dt)$.}

\rosso{Although our initial objects of investigation are  given by the empirical measure, flow and current, to get control on large deviations it is crucial to include more information and deal with their extended versions. Indeed,
the technical  core of our analysis is the   derivation of
  the LD principle   for the joint extended empirical measure and flow
   $\bigl(\mu^{(n)}, Q^{(n)}\bigr)$
   as $n$ goes to $\infty$ (cf. Theorem \ref{teo2}).
   Roughly, we get that
  \[ \bbP\left(  (\mu^{(n)}, Q^{(n)})\approx (\mu,Q) \right) \asymp e^{- n I(\mu,Q)}\,, \qquad n\gg 1\,,
  \]
  for a suitable  explicit rate functional $I(\cdot ,\cdot)$.
       By contraction, we obtain the  LD principle  for the joint extended empirical measure and current  $\bigl(\mu^{(n)}, J^{(n)}\bigr)$ (cf. Theorem \ref{teo3}). These LD principles  correspond to  level 2.5 (see below) and remarkably admit  explicit LD rate functionals. By contraction, we derive LD principles for the empirical  measure, flow and current  $\bar \mu_T$, $\bar Q_T$, $\bar J_T$ (cf. Theorem \ref{teo1} , Remark \ref{parto} and Remark \ref{nuovo}). As an application of the above results, after introducing  several forms of entropy production, we obtain the associated LD  principles by contraction. Moreover, we derive some Gallavotti-Cohen duality relations at the level 2.5 and show that, by  projection, theses relations  imply other Gallavotti-Cohen duality relations  for the entropy production rate and  some of them are new \blu{(cf. Theorem
\ref{teo_GC1},  Corollaries  \ref{cor_GC} and \ref{negramaro})}.
In addition, we discuss in detail the case of a 2-state model.
We point out that for a time periodic symmetric protocol, Gallavotti-Cohen duality relations for the entropy production rate have been experimentally verified in \cite{Schuller}  and theoretically analyzed in stochastic models in
\cite{Sinitsyn,Verley}.
Recently, in \cite{Ray}  a Gallavotti-Cohen duality relation for a not symmetric protocol has been obtained. Finally, we point out  that
  \cite{SW} provides a first  theoretical analysis of level 2.5 large deviations in periodically driven diffusion processes. }

\rosso{Our results  are a natural development  of the analysis  of level 2.5 large deviations for  time homogeneous  \blu{ergodic} Markov processes.} \blu{To explain this issue,  below  we recall  some fundamental  results  (in particular, below  we refer to time homogeneous processes).}
The celebrated papers by Donsker and Varadhan \cite{DV4} \rosso{have provided a
crucial contribution to the
 large deviation theory for ergodic Markov processes.}
One is typically interested on the long time
behavior of the process and three possible levels on which the large
deviations can be investigated have been identified: level 1, that
concerns the fluctuations of additive observables; level 2, that
concerns the fluctuations of the empirical measure; level 3, that
concerns the fluctuations of the empirical process.  These levels have
a hierarchical structure and
the large deviations on a lower level can be deduced by projection.
As the name implies, level 2.5 lies in between level 2 and level 3 and
concerns the joint fluctuations of the empirical measure and empirical
flow \blu{(or the  joint fluctuations of the empirical measure and empirical current)}. In the simple context of homogeneous continuous time Markov chains, the
empirical flow counts the numbers of jumps between pairs of states.
We emphasize that  in this case the rate functional for level 1 cannot be expressed in
closed form, for level 2 this is possible only in the reversible case,
while for level 3 the rate functional  is given by the specific relative
entropy with respect to the stationary process, that gives an explicit
but somehow abstract formula. On the other hand for level 2.5 there is
a simple explicit formula that covers both the reversible and non-reversible case, so  \blu{level 2.5 represents the lowest level admitting an explicit rate functional}.

A relevant motivation for the analysis of large deviations at level
2.5 comes from non-equilibrium statistical physics. Indeed, in this
context the current flowing through the system is a key observable and
exhibits rich and peculiar large deviation behavior, see e.g.\
\cite{noi,Lazarescu}. Moreover, the statistics of the entropy
production and the Gallavotti-Cohen symmetry cannot be described only
in terms of the empirical measure but require also the current
\cite{Chernyak3,Maes3,Maes4}.
From a purely probabilistic viewpoint,
the level 2.5 has been firstly investigated in \cite{Kesidis} in the
case of a two-state chain.  For a countable state space, the level 2.5
weak large deviation principle has been established in
\cite{Fortelle}.  In the same setting, the  large deviation
principle is proven in \cite{BFG1} \rosso{(and further analyzed in \cite{BFG_MPRF,BFG2})}, while the analogous result for
diffusion processes is obtained in \cite{Kusuoka}.
A more general setting with time dependent empirical measure and flow
is considered in \cite{KJZ,Re}. We also point out  that  recently some  thermodynamic uncertainty relation \cite{BaratoS} and some related
universal bound on current fluctuations \cite{Barato2,Barato3} have
been derived in \cite{Gingrich,Gingrich2} by using   the  level 2.5 large
deviation principle.
Finally, we refer to \blu{\cite{BFG_MPRF,che3}} for \blu{a further  discussion about level 2.5 for time-homogeneous Markov processes}.\\

\noindent
{\bf Outline of the paper}. In Section \ref{MR} we describe our main
assumptions on the continuous time Markov chains with time periodic
rates. In Section \ref{sec_LDP} we introduce the empirical measure,
flow, current and state our main large deviation principles
(cf. Theorems \ref{teo1}, \ref{teo2} and \ref{teo3}). In Section \ref{sec_EP} we discuss three forms of entropy production and in Section
\ref{sec_GC} we state the associated   Gallavotti-Cohen duality relations    (cf. Theorem
\ref{teo_GC1},  Corollaries  \ref{cor_GC} and \ref{negramaro}).  In Section \ref{sec_2stati} we apply our general
results to the case of   two states continuous time Markov chains with
time periodic rates. The rest of the paper is devoted
to the proof of our results. In particular, in Section
\ref{sec_preliminare} we collect some preliminary facts. In Section
\ref{sec_UB} we prove the upper bound for the LDP stated in Theorem
\ref{teo2} (cf. Eq. \eqref{eq_UB}), the convexity  \blu{and the lower-semicontinuity of  the LD rate functional  of Theorem \ref{teo2}}, while in Section \ref{sec_LB} we
prove the lower bound (cf. Eq. \eqref{eq_UB}) and the goodness of the \blu{same} rate functional. Theorem \ref{teo1}
follows easily from Theorem \ref{teo2} by contraction and therefore
the proof is omitted. The proofs of Theorems \ref{teo3} and Theorem
\ref{teo_GC1} are given in Sections \ref{asia} and \ref{pizza},
respectively.

\section{Continuous time Markov chains with time periodic rates}\label{MR}
We consider a continuous time Markov chain $\xi=(\xi_t)_{t \in \bb R_+} $ on a finite state space $V$,  with time periodic jump rates. We call $r(y,z;t)$ the jump rate from $y$ to $z$  at time $t$ and we assume that $r(\cdot,\cdot;t)= r(\cdot,\cdot; t+T_0)$ for some  $T_0>0$. To have a well defined process we  assume that
$r(y,z; \cdot)$ is a measurable,  locally integrable nonnegative function (see below). We also convey that  $r(x,x; t) \equiv 0$.

Roughly, the dynamics is defined as follows. Starting from a state $x$, the Markov chain spends at $x$ a random time $\tau_1$ such that
$$ \bb P( \tau_1 >t)= \exp \left\{ - \int _0 ^t r(x; s) ds \right\} \,,
$$
where
\[ r(x;s):= \sum _z r(x,z;s) \,.\]
Knowing that $\tau_1=t_1$, at time $t_1$ the Markov chain jumps to a new state $x_1$ chosen randomly with probability $r(x,x_1;t_1)/ r(x;t_1)$, afterwards it waits in $x_1$ a random time $\tau_2$  such that
$$ \bb P( \tau_2>t)= \exp \left\{ - \int _{t_1} ^{t_1+t} r(x_1; s) ds \right\} \,.
$$
Knowing that $\tau_2=t_2$, at time $t_2$ the Markov chain jumps to a new state $x_2$ chosen randomly with probability $r(x_1,x_2;t_2)/ r(x_1;t_2)$, and so on.

Above we have  not used the periodicity of the jump rates and indeed the  construction is common to all  time inhomogeneous Markov chains. Formally, a time inhomogeneous Markov chain can be seen as a piecewise-deterministic Markov process and its precise definition follows from the general construction in \cite{D}.  Indeed, we can introduce the continuous variable $s \in [0, +\infty)$ and describe the state of the system at time $t$ by $(\xi_t , s_t)$ where $s_t :=t$. Then the evolution in $V \times \bbR_+$ is described by a time homogeneous piecewise-deterministic Markov process  with formal generator $L$
\begin{equation}\label{ago}
 L f(x,s) = \partial _s f(x,s) + \sum _y r (x,y;s) \bigl[ f(y,s) - f (x,s) \bigr]\,.
\end{equation}
Following \cite{D}, to have a well defined operator one needs that the jump rates
$r (x,y;\cdot )$ are  measurable,  locally integrable nonnegative functions.
Due to \cite{D} time inhomogeneous Markov chains enjoy the strong Markov property.

%

\medskip

 We denote by $E$ the set of pairs $(y,z)$ such that $r(y,z; t)>0$ for all $t>0$, $y \not =z$.
 \rosso{We think of $(V,E)$ as a directed graph. Moreover,  we write $\cS_{T_0}$ for the set $ \bbR/ T_0 \bbZ$, i.e. for the set $[0,T_0]$ with periodic boundary conditions ($0$ and $T_0$ have to be identified).  }

\bigskip

\noindent
{\bf Assumptions}. \emph{Our  assumptions are the following:
\begin{itemize}
\item[(A1)]
 If $r(y,z;t)>0$ for some $t>0$, then $r(y,z;t)>0$ for all $t>0$;
 \item[(A2)]  \rosso{The  directed graph $(V,E)$ is  strongly connected;}
 \item[(A3)] The jump rates are nonnegative measurable functions such that
\begin{align}
&\max _{(y,z)\in E}  \sup _{t\in [0,T_0]} r(y,z; t) <\infty\,,\label{25aprile}\\
& \min _{(y,z)\in E}  \inf _{t\in [0,T_0]} r(y,z; t) >0\,.\label{1maggio}
\end{align}
\item[(A4)]
We assume that the set $\cD$ has zero Lebesgue measure, where    $\cD\subset \cS_{T_0}$ is the set of discontinuity points of the jump rates  $\cS_{T_0} \ni t\mapsto r(y,z;t)\in [0, \infty)$, as  $y,z$  vary in $V$.
\end{itemize}}

\bigskip
Assumption (A2) means that, given two distinct sites $y,z$ in $V$, there is a family of vertexes $x_0,x_1, \dots,x_n$
such that $x_0=y$, $x_n=z$ and $(x_i, x_{i+1})\in E$ for all $i=0,1,\dots, n-1$.

We point out  that assumption (A4) is used only to derive Lemma \ref{fuji}.

\bigskip

Trivially, the discrete time  process $\tilde \xi=(\tilde \xi_n)_{n \geq 0}$, with  $\tilde \xi_n:= \xi_{n T_0}$,   is a time homogeneous  Markov chain. We write $\tilde p(y,z)$, $y,z\in V$, for  its jump probabilities.
Since $V$ is finite and  \rosso{ $(V, E)$ is strongly connected},   $\tilde \xi$ admits a unique invariant distribution $\pi_0$, i.e. a unique probability measure $\pi_0$ on $V$ such that
\begin{equation}
  \label{invariante}
  \sum _{y \in V} \pi_0(x) \, \tilde p(x,y)
  = \sum _{y \in V} \pi_0(y) \, \tilde p(y,x)\qquad \forall \:x \in V\,.
\end{equation}
Note that the Markov chain $\tilde \xi$ starting with the invariant distribution $\pi_0$ is stationary, i.e. its law is invariant under time shifts (cf. Th.1.7.1 in \cite{N}). As a byproduct of this fact, the Markov property fulfilled by $\xi$ and the $T_0$-periodicity of the jump rates, one easily gets that   the Markov chain $\xi$ starting with initial distribution $\pi_0$ is $T_0$-stationary, i.e. its law is invariant under time translations  along times $T_0, 2T_0, 3T_0,\dots$.
In particular, when $\xi$ starts with distribution $\pi_0$, the law $\pi_t$ of  $\xi_t$ is a $T_0$-periodic function from $\bb R_+$ to the space  $\mc P (V)$ of probability measures on $V$.  We point out that $\pi_0$ is indeed the only initial distribution for which the  Markov chain $\xi$ starting at $\pi_0$  is $T_0$-periodic, hence we call the associated law
of $\xi=(\xi_t)_{t\geq 0}$ on  the space of c\`adl\`ag paths $D(\bbR_+;V)$ the \emph{oscillatory steady state} (sometimes, as in \cite{S}, this state is called \emph{nonequilibrium oscillatory state}, shortly NOS).

In what follows, we set \blu{$\pi:= \pi_t dt$. $\pi$ is  a nonnegative measure on $V \times \cS_{T_0}$ with total mass $T_0$ (shortly, $\pi \in \cM_{+,T_0} (V\times \cS_{T_0})$).}
\rosso{Given a probability measure $\nu$ on $V$,
we write $\bbP_\nu$ for the law of the Markov chain $(\xi_t)_{t\geq0}$  with initial distribution $\nu$, and we simply write  $\bbP_x$ if $\nu=\d_x$. The associated expectations are denoted by $\bbE_\nu$ and $\bbE_x$, respectively.}

\subsection{Graphical construction}\label{grafica}
We conclude by providing  a graphical construction of  the continuous time  Markov chain $(\xi_t)_{t\in \bbR_+}$, which will be useful in what follows.

To each $(y,z) \in E$ we  associate  a Poisson  process   of rate $\l(y,z)= \sup_{t\in [0,T_0] } r (y,z; t)$. We write
\[ \cT_{y,z}= \{ t^{(1)}_{y,z} < t^{(2)} _{y,z}< \cdots \}\]
for the jump times of the above  Poisson  process.
 Let us write
 \[\cT_y= \{ t^{(1)}_y < t^{(2)} _y< \cdots \}  \] for the superposition $\cup_z  \cT_{y,z}$. It is known that $\cT_y$ is a Poisson point process on $(0,\infty)$ with rate $\l(y):=\sum_z \l(y,z)$ (i.e. $\cT_y$ is the  set of  jump times of a Poisson process with rate $\l(y)$).
 Note that $\l(y) <\infty$ due to \eqref{25aprile}.
For each $(y,z)\in E$ consider also a sequence of i.i.d.  random variables $\cU_{y,z}=(U^{(k)}_{y,z}) _{k \geq 1}$ uniformly distributed on $[0,1]$.  The random objects given by $\cU_{y,z}$, $\cT_{y,z}$ with $(y,z)$ varying  in $E$, must be all independent.

Then the graphical construction is the following.  Suppose that $t=0$ or that the chain has been updated  at time $t$ and its state at time $t$ is  $y$. Let $s$ be the minimum of the set
$ \cT_y \cap (t,+\infty)$ and let $k,z$ be such that $s= t^{(k)}_{y,z}$ (they are well defined a.s.). Then  $s= t^{(k)}_{y,z}$ is an update time and the update is the following:
if  $U_k\leq \frac{ r(y,z; s)}{\l(y,z)}$ then we let   $\xi_s:=z$, otherwise  we let
$\xi_s:=y$. After the update the algorithm starts  afresh.

\section{Large deviation principles}\label{sec_LDP}
\subsection{Joint LDP for the empirical measure and flow}

\begin{definition}\label{pierpibello}
Given $T>0$, to each path $X \in D( \bb R_+; V)$  we associate the empirical measure $\bar \mu_T(X)\in \mc P(V) $ and the empirical flow $\bar Q_T(X) \in \bb R_+^E$ defined as
$$ \bar \mu_T(X)= \frac{1}{T} \int _0^T \delta_{X_t} dt \,, \qquad \bar  Q_T(X)(y,z)= \frac{1}{T}
\sum_{\substack{t \in [0,T]: \\ X_{t-} \not = X_t }} \mathds{1}\bigl(( X_{t-},  X_t)= (y,z) \bigr) \,.
$$
\end{definition}

  Let
$\Phi\colon \bb R_+ \times \bb R_+ \to [0,+\infty]$ be the function
defined by
\begin{equation}
  \label{Phi}
   \Phi (q,p)
   :=
   \begin{cases}
     \displaystyle{ q \log \frac qp - (q-p)}
     & \textrm{if $q,p\in (0,+\infty)$}\,,
     \\
     \;p  & \textrm{if $q=0$, $p\in [0,+\infty)$}\,,\\
     \; +\infty & \textrm{if $p=0$ and $q\in (0,+\infty)$.}
   \end{cases}
\end{equation}
For $p>0$, $\Phi( \cdot, p)$ is a nonnegative strictly convex function and is zero only at $q=p$. Indeed, since $\Phi(q,p)=\sup_{s\in \mathbb R}\left\{qs-p(e^s-1)\right\}$, $\Phi(\cdot, p) $   is the rate functional for the LDP of the sequence  $N_T/T$ as $T \to +\infty$, $(N_t)_{t \in \bb R_+}$
being a Poisson process with parameter $p$.

\medskip

  Given $t \in [0,T_0)$, let  $I_t\colon \mc P(V)\times \bb R_+^E \to
[0,+\infty]$ be the functional defined by
\begin{equation}
  \label{rfq}
  I _t (\bar \mu,\bar Q) :=
    \sum_{(y,z)\in E} \Phi \big( \bar Q(y,z), \bar \mu(y) r(y,z;t)\big)\,.
\end{equation}

\begin{theoremA}\label{teo1}   For each $x \in V$, \blu{by taking $T$ of the form $T=nT_0$ with $n$ integer,}
as $T\to+\infty$ the family of probability measures \rosso{$\{\bb
  P_x\circ (\bar\mu_T,\bar Q_T)^{-1}\}$} on $\mc P(V)\times \re$
  satisfies a large deviation principle with speed $T$ and good and convex rate functional $\bar I$ defined as
  \begin{equation}\label{aiuto}
  \bar I (\bar \mu,\bar Q)= \inf _{(\mu_t,Q_t)_{\rosso{t \in\cS_{T_0}} }}\frac{1}{T_0} \int _0^{T_0} I_t(\mu_t, Q_t) dt \,,
  \end{equation}
  where the infimum is taken among all measurable functions $\cS_{T_0} \ni t \to (\mu_t, Q_t)\in
  \mc P(V)\times \re$ such that
 \begin{equation}\label{siriano}
  \begin{cases}
   \partial _t \mu_t + \div Q_t  =0\,,\\
   \frac{1}{T_0}  \int _0^{T_0} \mu_t dt =\bar \mu\,,\\
   \frac{1}{T_0} \int _0^{T_0} Q_t dt = \bar Q\,.
   \end{cases}
   \end{equation}
\end{theoremA}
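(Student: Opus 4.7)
The plan is to deduce Theorem A from Theorem \ref{teo2} by the contraction principle, as explicitly indicated by the authors. Theorem \ref{teo2} yields a large deviation principle at speed $n$ for the extended empirical measure and flow $(\mu^{(n)}, Q^{(n)})$, valued in $\mc M_{+,T_0}(V \times \cS_{T_0}) \times \mc M_+(E \times \cS_{T_0})$, with a good and convex rate functional $I$. This functional is finite precisely on pairs $(\mu, Q)$ admitting densities $(\mu_t, Q_t)_{t \in \cS_{T_0}}$, with $\mu_t \in \mc P(V)$ and $Q_t \in \re$, satisfying the continuity equation $\partial_t \mu_t + \div Q_t = 0$, and on such pairs
\[
I(\mu, Q) = \int_0^{T_0} I_t(\mu_t, Q_t)\, dt.
\]

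First I would introduce the averaging map
\[
\pi\colon \mc M_{+,T_0}(V \times \cS_{T_0}) \times \mc M_+(E \times \cS_{T_0}) \longrightarrow \mc P(V) \times \re, \qquad \pi(\mu, Q) := \Bigl( \tfrac{1}{T_0}\mu(\cdot, \cS_{T_0}),\ \tfrac{1}{T_0}Q(\cdot, \cS_{T_0}) \Bigr).
\]
The map $\pi$ is linear and continuous in the weak topology of the source space, since each of its coordinates amounts to pairing $(\mu, Q)$ against a bounded continuous function on $\cS_{T_0}$. A direct check from Definition \ref{pierpibello} together with the definition of $\mu^{(n)}, Q^{(n)}$ gives, for $T = nT_0$,
\[
\pi\bigl(\mu^{(n)}, Q^{(n)}\bigr) = (\bar\mu_T, \bar Q_T).
\]

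I would then apply the contraction principle to the LDP in Theorem \ref{teo2}. Under $\bbP_x$, the laws of $(\bar\mu_T, \bar Q_T) = \pi(\mu^{(n)}, Q^{(n)})$ satisfy an LDP at speed $n$ with good rate functional $\inf\{I(\mu, Q) : \pi(\mu, Q) = (\bar\mu, \bar Q)\}$. Rewriting $n = T/T_0$ converts the speed to $T$ and absorbs a factor $1/T_0$ into the rate functional. Combined with the explicit form of $I$ and the definition of $\pi$, the infimum takes the shape
\[
\bar I(\bar\mu, \bar Q) = \inf \left\{\, \frac{1}{T_0}\int_0^{T_0} I_t(\mu_t, Q_t)\, dt \,\right\},
\]
where the infimum is over measurable curves $(\mu_t, Q_t)$ satisfying the constraints \eqref{siriano}. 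The continuity equation in \eqref{siriano} is inherited from the effective domain of $I$, while the two averaging constraints come from $\pi(\mu, Q) = (\bar\mu, \bar Q)$. This is exactly formula \eqref{aiuto}.

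Finally, goodness of $\bar I$ is automatic from the contraction principle applied to a good rate functional along a continuous map, while convexity of $\bar I$ follows from the linearity of $\pi$ together with convexity of $I$: for any two admissible pairs, a convex combination still satisfies the continuity equation and has averages equal to the convex combination of the $(\bar\mu^i, \bar Q^i)$, and convexity of $I_t$ (itself inherited from convexity of $\Phi(\cdot, p)$) yields the inequality after integrating in $t$. The argument is essentially routine, which is why the authors elect to omit it; the only bookkeeping worth flagging is the factor $1/T_0$ produced by the change of speed from $n$ to $T$.
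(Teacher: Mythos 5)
Your proposal is correct and follows exactly the route the paper indicates: the authors omit the proof, noting only that $\bar \mu_{nT_0} = \frac{1}{T_0}\mu^{(n)}(\cdot,\cS_{T_0})$ and $\bar Q_{nT_0} = \frac{1}{T_0}Q^{(n)}(\cdot,\cS_{T_0})$ and invoking the contraction principle applied to Theorem \ref{teo2}, which is precisely your averaging map argument including the speed conversion from $n$ to $T=nT_0$. The only cosmetic caveat is that your symbol for the averaging map collides with the paper's use of $\pi$ for the oscillatory steady state.
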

Theorem \ref{teo1}   follows easily  by contraction from  Theorem \ref{teo2} below, hence we omit the proof.

\smallskip
We give some comments on the notation used in Theorem \ref{teo1}. First, we recall that  the infimum of the empty set equals $+\infty$ by definition.
We  also recall that given   $A\in  \bbR_+^E$, the divergence $\div A: V \to \bbR$ is defined as
\begin{equation}  \label{ciocco}
\div A(y) = \sum_{z: (y,z) \in E} A(y,z) - \sum_{z: (z,y) \in E} A(z,y)\,.
\end{equation}
Below we will often use the convention that, given a  function \rosso{$B: E \to \bbR$, we set $B(y,z):=0$ if $(y,z) \not \in E$.} For example, due to this convention, we can rewrite \eqref{ciocco} as $\div A(y) = \sum_{z} A(y,z) - \sum_{z} A(z,y)$.
Finally,
the above \emph{continuity  equation }  $\partial _t \mu_t + \div Q_t  =0$ in \eqref{siriano} is thought of  in weak sense, i.e. (using the time $T_0$-periodicity)
\begin{equation}\label{zuccotto}
\int_0^{T_0} \sum _y \mu_s(y) \partial_s f(y,s) ds =  \int_0 ^{T_0} \sum _y \div Q_s(y) f(y,s) ds\,,
\end{equation}
for any $C^1$ function $f : V   \times \cS_{T_0}\to  \bbR$.
Here and in what follows, the $C^1$-regularity refers to time.

We finally observe that if $ \div \bar Q \neq 0$ then  $ \bar I (\bar
\mu,\bar Q)= +\infty$. Indeed by taking time average of the continuity
equation in \eqref{siriano} and using that $t\mapsto \mu_t$ is
\rosso{
defined on $\cS_{T_0}$ (and therefore can be thought of as a
$T_0$-periodic function),}
we deduce $\div \bar Q =0$.

\begin{remark}\label{parto}
By contraction one derives from Theorem \ref{teo1} both a LDP for the empirical measure $\bar \mu_T$ and a LDP  for the empirical flow $\bar Q_T$  \blu{(cf.  \cite{BFG_MPRF} for the corresponding contraction in the time--homogeneous case)}.
\end{remark}

\begin{remark}\label{parto2}
  By the goodness of the rate function $I$ in  Theorem~\ref{teo2},
  the infimum in \eqref{aiuto} is achieved whenever \eqref{siriano}
  admits a solution.  \blu{In particular, by the goodness of $I$, we have that   $\bar I (\bar \mu,\bar Q)= 0$
  if and only if there exists a pair $\mu=\mu_t dt$, $Q=Q_t dt$ solving \eqref{siriano} and such that $I(\mu, Q)=0$. As a byproduct with
  Remark    \ref{t-rex} below,  we conclude that
   $\bar I (\bar \mu,\bar Q)= 0$ if and only if $\bar\mu =\frac
  1{T_0}\int_0^{T_0} \pi_t \, dt$ and $\bar Q(y,z) = \frac
  1{T_0}\int_0^{T_0} \pi_t(y) r(y,z;t) \, dt$ for each $(y,z) \in E$. }\end{remark}

\subsection{Joint LDP for the extended empirical measure and flow}\label{2018}
We introduce the space  $\mc M_{+,T_0} ( V \times \cS_{T_0} )$ as   the family of nonnegative measures on $V \times \cS_{T_0} $ with total mass equal to $T_0$, and the space  $\mc M_+ (E \times \cS_{T_0}) $ as  the family of  nonnegative measures on $E \times \cS_{T_0}$ with finite total mass. Both spaces are endowed with the weak topology, i.e. $\nu_n \to \nu$  if and only if $\nu_n(f) \to \nu(f)$ for any bounded continuous function $f$ (by compactness,   continuous functions on $V \times \cS_{T_0}$ and on  $E \times \cS_{T_0}$   are automatically bounded).
  We will often use the trivial identifications   $\mc M_{+} ( V \times \cS_{T_0} ) \sim \mc M_{+} ( \cS_{T_0} ) ^V $ and $\mc M_+ (E \times \cS_{T_0}) \sim  \mc M_+ ( \cS_{T_0})^E  $, as in the following definition:

\begin{definition}
Given a positive integer $n$,  to each path $X \in D( \bb R_+; V)$ we associate the \emph{extended empirical measure} $\mu^{(n)} \in  \mc M_{+,T_0} ( V \times\cS_{T_0}) $ and the \emph{extended empirical flow} $Q^{(n)} \in \mc M_+ (E \times \cS_{T_0})$ defined by
\begin{align}
& \mu^{(n)}(x,dt )= \mu^{(n)} _t (x) dt  \;\;  \text{ where } \;\;\mu^{(n)} _t (x) := \frac{1}{n} \sum _{k=0}^{n-1}  \delta _{X_{t+k T_0}} (x) \,, \label{poretti}\\
& Q^{(n)}(y,z,B)= \frac{1}{n } \sum _{k=0}^{n-1}  \, \sum _{ \substack{ t\in B+k T_0\,:\\  X_{t^-} \not= X_t}} \mathds{1}\bigl(  (X_{t-},X_t)=(y,z) \bigr)\,,\label{moretti}
\end{align}
where $B$ is a generic Borel subset $B \subset (0,T_0]$ (in the above formulas we have used the natural parametrization of  $\cS_{T_0}$   by $(0,T_0]$).
\end{definition}

\medskip
 We can identify functions   $f: V \times \cS_{T_0} \to \bbR$ with functions $f: V \times \bbR_+\to \bbR $ which are $T_0$-periodic in the time variable. In what follows, when we say that $f: V \times \bbR_+\to \bbR $ is $T_0$-periodic or $C^k$ we always  mean in the time variable. Similar considerations hold for functions $f: E\times \cS_{T_0} \to \bbR$. By means of this identification, we can rewrite \eqref{poretti} and \eqref{moretti} as
 \begin{align}
& \mu^{(n)} (f) = \frac{1}{n}\int_0^{nT_0} f( X_t, t) dt\,,\qquad  \qquad\;\;  \qquad  f: V \times \cS_{T_0} \to \bbR\,,\label{roma_as}  \\
&Q^{(n)}(g)=\frac{1}{n} \sum _{\substack{ t \in [0, nT_0] : \\
X_{t^-}\not = X_t}} g  (X_{t-},X_t, t)\,, \qquad \qquad    g: E \times \cS_{T_0} \to \bbR \,,\label{juve}
 \end{align}
 where $f,g$ are bounded and measurable.

 \medskip

 To simplify the notation from now on we set
 \begin{equation}\label{ho_fame}
 \cM _*:= \mc M_{+,T_0}( V \times \cS_{T_0} )\times \mc M_+ (E \times \cS_{T_0})\,.
 \end{equation}

 \begin{definition}\label{waffel}
 We introduce the subset $\L\subset \cM_*$ given by the pairs $(\mu,Q)\in \cM_*$ such that:
 \begin{itemize}
 \item[(i)]   $\mu= \mu_t dt$ with
$\mu_t(V)=1$ for almost every $t \in \cS_{T_0}$;
\item[(ii)] $Q= Q_t dt $;
\item[(iii)]  $\partial_t \mu_t+ \div Q_t=0$ \rosso{weakly};
\item[(iv)] for almost every $t\in \cS_{T_0}$ it holds: $\mu_t(y) =0 \Rightarrow Q_t(y,z)=0$ for all $(y,z)\in E$.
\end{itemize}
 \end{definition}
\begin{theoremA}\label{teo2}
Given $x \in V$ the family $\left	\{ \bb P_x \circ (\mu^{(n)},Q^{(n)})^{-1}\right\}_{n \geq 1}$ of probability measures on
$\cM_*$ satisfies   a   large deviation principle with speed $n$ and good and convex rate functional $I$ defined as
\begin{equation}\label{pakistano}
I(\mu,Q):= \begin{cases}
\int _0^{T_0} I_t(\mu_t,Q_t) dt  & \text{ if }  (\mu,Q)\in \L\,,\\
+\infty & \text{ otherwise}\,.
\end{cases}
\end{equation}
\end{theoremA}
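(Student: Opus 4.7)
The strategy is to lift the problem to the time-homogeneous Markov process $(\xi_t, t \bmod T_0)$ on the product state space $V \times \cS_{T_0}$, so that $\mu^{(n)}$ and $Q^{(n)}$ become the time-averaged empirical measure and empirical flow of an ergodic (thanks to (A2)) time-homogeneous chain run up to time $nT_0$. This reduces the task to an adaptation of the level 2.5 LDP machinery developed for homogeneous continuous-time Markov chains (cf.\ \cite{BFG1,BFG_MPRF,Fortelle}).

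For the upper bound, the key tool will be the family of mean-one exponential martingales
\[
M_t^g := \exp\!\left\{\sum_{s \leq t:\, \xi_{s-} \neq \xi_s} g(\xi_{s-}, \xi_s, s) \;-\; \int_0^t \sum_z \bigl(e^{g(\xi_s, z, s)} - 1\bigr) r(\xi_s, z; s)\, ds\right\},
\]
indexed by bounded measurable $T_0$-periodic $g: E \times \cS_{T_0} \to \bbR$. Evaluated at $t = nT_0$ they read $M_{nT_0}^g = \exp\bigl\{n[Q^{(n)}(g) - \mu^{(n)}(\tilde g)]\bigr\}$ with $\tilde g(y,t) := \sum_z (e^{g(y,z,t)}-1) r(y,z;t)$. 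Combining Markov's inequality with exponential tightness---which I would obtain by dominating the total jump count $Q^{(n)}(E\times\cS_{T_0})$ by a Poisson sum via the graphical construction of Section~\ref{grafica} and assumption (A3)---leads to the weak upper bound $\limsup_n \tfrac{1}{n} \log \bbP_x((\mu^{(n)}, Q^{(n)}) \in A) \leq -\inf_{\bar A} \sup_g [Q(g) - \mu(\tilde g)]$. A pointwise Legendre computation based on $\Phi(q,p) = \sup_s[qs - p(e^s-1)]$ identifies the inner supremum with $\int_0^{T_0} I_t(\mu_t,Q_t)\,dt$ whenever $(\mu,Q) \in \L$, and returns $+\infty$ as soon as the absolute continuity condition (iv) of Definition~\ref{waffel} fails. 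The continuity equation (iii) is forced a posteriori by applying Dynkin's formula to $C^1$ test functions $f: V \times \cS_{T_0} \to \bbR$ (viewed as $T_0$-periodic in time): by periodicity the boundary contribution $f(\xi_{nT_0}, 0) - f(\xi_0, 0)$ is $O(1)$, so dividing by $n$ yields the identity $\mu^{(n)}(\partial_s f) + Q^{(n)}(\nabla f) = O(1/n)$, with $\nabla f(y,z,s) := f(z,s) - f(y,s)$, which in the limit enforces $\partial_t \mu_t + \div Q_t = 0$ weakly.

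For the lower bound, given $(\mu, Q) \in \L$ with $I(\mu, Q) < \infty$, the plan is to tilt the law of $\xi$ to that of the time-periodic chain with rates $\tilde r(y,z;t) := Q_t(y,z)/\mu_t(y)$ on $\{\mu_t(y) > 0\}$. Conditions (iii)--(iv) of Definition~\ref{waffel} encode precisely that $\mu_t$ is an invariant periodic solution for $\tilde r$; correspondingly, on the product space $V \times \cS_{T_0}$ the lifted tilted chain is time-homogeneous with stationary probability $\mu/T_0$, and an ergodic theorem gives $(\mu^{(n)}, Q^{(n)}) \to (\mu, Q)$ in $\tilde \bbP_x$-probability. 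Writing out the Radon-Nikodym derivative $d\bbP_x/d\tilde\bbP_x$ at time $nT_0$ in terms of $(\mu^{(n)}, Q^{(n)})$, it equals $\exp\{-n I(\mu,Q) + o(n)\}$ on a high-probability set, giving $\liminf_n \tfrac{1}{n} \log \bbP_x((\mu^{(n)}, Q^{(n)}) \in A) \geq -I(\mu,Q)$ for any neighborhood $A$ of $(\mu, Q)$. The main obstacle I anticipate is that $\tilde r$ may vanish on edges where $Q$ does, in which case the tilted chain is no longer irreducible; to handle this I would first approximate $(\mu, Q)$ by regularized pairs $(\mu^\epsilon, Q^\epsilon) \in \L$ satisfying $Q^\epsilon_t(y,z) \geq \epsilon \mu^\epsilon_t(y) r(y,z;t)$ on all of $E$ (the strong connectivity in (A2) makes such a perturbation feasible), run the tilt for $(\mu^\epsilon, Q^\epsilon)$, and then let $\epsilon \downarrow 0$ using lower semicontinuity of $I$. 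A secondary technicality is the low regularity of $t \mapsto (\mu_t, Q_t)$, which assumption (A4) together with a further mollification in time should absorb.

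Finally, convexity and lower semicontinuity of $I$ follow at once from the variational representation $I(\mu, Q) = \sup_g [Q(g) - \mu(\tilde g)]$ established in the upper bound, as a supremum of weakly continuous affine functionals. Goodness (compactness of sublevel sets) then follows by combining this lower semicontinuity with the same exponential tightness already used for the upper bound.
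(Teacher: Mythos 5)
Your proposal follows essentially the same route as the paper: exponential tightness via Poisson domination from the graphical construction; an upper bound built from the exponential martingales $\exp\{n[Q^{(n)}(F)-\mu^{(n)}(r^F-r)]\}$ together with $C^1$ test functions enforcing the continuity equation (the paper's functionals $I_{\phi,F}$, Lemmas \ref{salmone}--\ref{mauro}), with the Legendre duality $\Phi(q,p)=\sup_s\{qs-p(e^s-1)\}$ identifying $\sup_{\phi,F}I_{\phi,F}$ with $I$ and blowing up exactly when conditions (ii) or (iv) of Definition \ref{waffel} fail; and a lower bound by tilting to $\tilde r=Q_t/\mu_t$, identifying $\mu_t$ as the oscillatory steady state of the tilted chain via Proposition \ref{ananas}, and computing the relative entropy per period (the paper packages this through Lemma \ref{omm} rather than a direct Radon--Nikodym estimate on a high-probability set, but the content is the same). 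Convexity, lower semicontinuity and goodness are obtained exactly as you indicate.

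One point needs correcting in your lower-bound sketch: to transfer the bound from the regularized pairs $(\mu^\epsilon,Q^\epsilon)$ back to $(\mu,Q)$ you invoke \emph{lower} semicontinuity of $I$, but that inequality goes the wrong way. What is actually required is $\varlimsup_{\epsilon\downarrow 0} I(\mu^\epsilon,Q^\epsilon)\le I(\mu,Q)$ along the chosen approximating sequence (so that the lsc envelope of the restricted functional is still dominated by $I$), and this does not follow from semicontinuity; it must be proved for the specific regularization. The paper does this in two steps (Claim \ref{quadrifoglio}): a convex combination with $(\pi,Q^\pi)$, where convexity of $I$ and $I(\pi,Q^\pi)=0$ give the bound for free, followed by a time mollification, where Jensen's inequality for $u\mapsto u\log u$ controls the entropy term. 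Your proposed regularization would need an analogous explicit verification of this upper-semicontinuity estimate.
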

\blu{The proof of the above theorem is given in Sections \ref{sec_UB} and \ref{sec_LB}.}

\smallskip

We recall that the above LDP means that, for any $\cC \subset \cM_*$ closed and any $\cG \subset \cM_*$ open, it holds
\begin{align}
& \varlimsup_{n\to \infty} \frac{1}{n} \log \bbP_x \bigl( (\mu^{(n)},Q^{(n)}) \in \cC ) \leq - \inf _{(\mu,Q)\in \cC} I(\mu,Q)\,, \label{eq_UB}\\
& \varliminf_{n\to \infty} \frac{1}{n} \log \bbP_x \bigl( (\mu^{(n)},Q^{(n)}) \in \cG ) \geq - \inf _{(\mu,Q)\in \cG} I(\mu,Q)\,.
\label{eq_LB}
\end{align}

\begin{remark}\label{t-rex}
We point out that $I(\mu,Q) =0$ if and only if $\mu = \pi_t\,dt$ and $Q= Q_t dt$ with
$Q_t(y,z)= \pi_t(y) r(y,z;t)$, $(y,z)\in E$. \rosso{Indeed, by the properties of the function $\Phi$ stated after \eqref{Phi}, $I_t(\mu_t, Q_t)=0$ if and only if $Q_t (y,z)= \mu_t(y) r(y,z;t)$ for any $(y,z) \in E$. By Proposition \ref{ananas} and the continuity equation, this last property is satisfied  for almost all $t$ only when $\mu = \pi_t\,dt$ and $Q= Q_t dt$ with
$Q_t(y,z)= \pi_t(y) r(y,z;t)$ for any $(y,z)\in E$.}
\end{remark}
Since $\bar \mu_{n T_0} (\cdot)= \frac{1}{T_0} \mu^{(n)}(\cdot,
\cS_{T_0} ) $ and $\bar Q_{n T_0} (\cdot)= \frac{1}{T_0}
Q^{(n)}(\cdot, \cS_{T_0}) $, Theorem \ref{teo1} with $T$ of the form $nT_0$ follows from Theorem
\ref{teo2} by applying the contraction principle.
\subsection{LDP for currents}
Recalling that $E$ denotes the set of ordered edges of $V$ with strictly positive jump rates, we let $E_s:=\{ (y,z) \in V \times V\,:\, (y,z) \in E \text{ or } (z,y) \in E\}$ be the symmetrization of $E$ in $V \times V$. We denote by $\bbR_a^{E_s}$  the family of functions $\bar J: E_s \to \bbR$ which are antisymmetric, i.e. $\bar J(y,z)=- \bar J(z,y)$ $\forall (y,z) \in E_s$.
\begin{definition}\label{loribello}
Given $T>0$, to each path $X \in D( \bb R_+; V)$  we associate the \emph{empirical current}   $\bar J_T(X) \in \bb R_a^{E_s}$ defined as
\begin{equation}
 \bar J_T(X)(y,z)
 = \frac{1}{T}
\sum_{\substack{t \in [0,T]: \\ X_{t-} \not = X_t }}\Big[ \mathds{1}\bigl(( X_{t-},  X_t)= (y,z) \bigr) -
\mathds{1}\bigl(( X_{t-},  X_t)= (z,y) \bigr)
\Big]
\end{equation}
for any $(y,z) \in E_s$.
\end{definition}

To introduce  the \emph{extended empirical current} we denote by $\cM_a (E_s \times \cS_{T_0})$  the space of signed measures  $J$ on $E _s\times \cS_{T_0}$ which are antisymmetric in $E_s$ (i.e. $J(y,z,A)=-J(z,y,A)$ for any $A \subset \cS_{T_0} $ measurable) and have  finite total variation  (i.e. $J$ can be written as difference
of two measures in $\cM_+ ( E_s \times \cS_{T_0})$). $\cM_a (E _s\times \cS_{T_0})$ is endowed with the usual weak topology, i.e. $\nu_n \to \nu$ if and only if $\nu_n(f)\to \nu(f)$ for any continuous function on $E _s\times \cS_{T_0}$.
\begin{definition}\label{cicciobello}
Given $T>0$, to each path $X \in D( \bb R_+; V)$  we associate the \emph{extended empirical current}   $J^{(n)} (X) \in \cM_a (E_s \times \cS_{T_0})$ defined as
\begin{equation}
J^{(n)}(y,z,B)= \frac{1}{n } \sum _{k=0}^{n-1}  \, \sum _{ \substack{ t\in B+k T_0\,:\\  X_{t^-} \not= X_t}}\Big[ \mathds{1}\bigl(  (X_{t-},X_t)=(y,z) \bigr)-\mathds{1}\bigl(  (X_{t-},X_t)=(z,y) \bigr)\Big]
\label{moretti_J}
\end{equation}
for any $B \subset [0,T_0)$ measurable.
\end{definition}

We introduce  the continuous map
\begin{equation}\label{gb}
\cJ : \cM_+ ( E \times \cS_{T_0}) \to \cM _a ( E_s \times \cS_{T_0}) \end{equation}
defined as
\[ \mathcal J(Q)(y,z,A):=Q(y,z,A)-Q(z,y,A)\,, \]
for any $ (y,z) \in E_s$ and $ A\subset \cS_{T_0}$   measurable,
with the convention that  $Q(y',z',A):=0$ if $(y',z') \not \in E$. Trivially, the following relation holds between the extended empirical flow and current:
\begin{equation}
\cJ\bigl( Q^{(n)} \bigr)= J^{(n)}\,.
\end{equation}
As a consequence, from the contraction principle and the joint LDP for  $(\mu^{(n)}, Q^{(n)})$  given in Theorem \ref{teo2}, we get that
 a joint LDP  holds for $(\mu^{(n)}, J^{(n)})$  with speed $n$ and   good and convex rate functional $\hat I(\mu,J)$ given by
 \begin{equation}\label{arancione}
\hat I(\mu,J):= \inf_{Q: \mathcal J(Q)=J}I(\mu,Q)\,.
\end{equation}
It turns out that the  above variational problem expressing the new rate functional $\hat I$ can be exactly solved, thus leading to Theorem \ref{teo3} below. In order to state this theorem, we need a preliminary definition:
\begin{definition}\label{deficit}
The set $\L_a$ is given by the pairs  $(\mu,J)\in  \mc M_{+,T_0}( V \times \cS_{T_0} )\times \mc M_a (E_s \times \cS_{T_0})$ such that
\begin{itemize}
 \item[(i)]   $\mu= \mu_t dt$ with
$\mu_t(V)=1$ for almost every $t \in \cS_{T_0}$;
\item[(ii)] $J= J_t dt $;
\item[(iii)]  $\partial_t \mu_t+ \div J_t=0$ where $\div J_t(y)= \sum_{(y,z) \in E_s} J_t(y,z)$;
\item[(iv)] for almost every $t\in \cS_{T_0}$ it holds: $\mu_t(y) =0 \Rightarrow J_t(y,z)\leq 0$ for all $(y,z)\in E_s$;
\item[(v)] $J_t(y,z)\geq 0$ if $(y,z)\in E$ and $(z,y)\not \in E$, while $J_t(y,z)\leq  0$ if $(y,z)\not \in E$ and $(z,y) \in E$
\end{itemize}
\end{definition}
We recall that the continuity equation in Item (iii) has to be thought in its weak form.
To  state the   joint  LDP for $(\mu^{(n)}, J^{(n)})$ we introduce also  the function
${\Psi: \bb R\times \bb R\times \bb R_+ \mapsto [0, +\infty]}$ given
by\footnote{\rosso{This formula corresponds to  \cite[Eq. (6.3)]{BFG2}, apart the correction of a typo there in the case $a=0$.}}
 \begin{equation}
\Psi ( u, \bar u; a):=
\begin{cases}
u \left[ {\rm arcsinh\,} \frac{u}{a} - {\rm arcsinh\,} \frac{\bar u
  }{a} \right] \\
\qquad - \left[ \sqrt{a^2+u^2} -
\sqrt{a^2 + \bar{u}^2 }\right] & \text{ if } a>0\,,\\
\Phi(| u|, |\bar u|)  & \text{ if } a=0\,.\\
\end{cases}
\end{equation}
 We recall that   ${\rm arcsinh\,}(x)  =\log[ x +\sqrt{x^2+1}]$.

Finally, for the theorem below,  we recall that $r(y,z;t):=0$ if $(y,z)\not\in E$.
\begin{theoremA}\label{teo3}
Given $x \in V$ the family $\left	\{ \bb P_x \circ (\mu^{(n)},J^{(n)})^{-1}\right\}_{n \geq 1}$ of probability measures on
\[
 \mc M_{+,T_0}( V \times \cS_{T_0} )\times \mc M_a (E_s \times \cS_{T_0})
 \]
  satisfies   a   large deviation principle with speed $n$ and   good and convex rate functional $\hat I$ given by
\begin{equation}\label{piscina}
\hat I(\mu,J)=
\begin{cases}
\int_0^{T_0}I_t(\mu_t, Q^{J,\mu}_t)\,dt& \text{ if } (\mu,J)\in \L_a \,,\\
+ \infty   & \text{ otherwise}\,,
\end{cases}
\end{equation}
where
\begin{equation}\label{domus}
Q^{J,\mu}_t(y,z)=\frac{J_t(y,z)+\sqrt{J_t^2(y,z)+4\mu_t(y)\mu_t(z)r(y,z;t)r(z,y;t)}}{2}\,.
\end{equation}
Moreover,  given  $(\mu,J)\in \L_a$,  the rate functional $\hat I(\mu,J)$ can be rewritten as
\begin{equation}\label{cuore}
\hat I(\mu,J)= \frac{1}{2}\sum_{(y,z)\in E_s}\int_0^{T_0} \Psi\left( J_t (y,z), J_t^\mu (y,z); a_t ^\mu(y,z)\right)dt \,,
\end{equation}
where
\begin{align*}
& J_t^\mu (y,z):=\mu_t(y) r(y,z;t)- \mu_t (z) r(z,y;t)\,,\\
& a_t ^\mu(y,z):= 2\sqrt{\mu_t(y) \mu_t(z) r(y,z;t) r(z,y;t)}\,.
\end{align*}
\end{theoremA}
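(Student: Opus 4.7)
The plan is to deduce Theorem~\ref{teo3} from Theorem~\ref{teo2} via the contraction principle followed by an explicit solution of the resulting variational problem. The map $\Pi\colon \cM_*\to \mc M_{+,T_0}(V\times\cS_{T_0})\times \mc M_a(E_s\times\cS_{T_0})$ given by $\Pi(\mu,Q):=(\mu,\cJ(Q))$ is continuous because $\cJ$ in \eqref{gb} is linear and weakly continuous. Since $(\mu^{(n)},J^{(n)})=\Pi(\mu^{(n)},Q^{(n)})$, the contraction principle produces the LDP at speed $n$ with good rate functional
\[
\hat I(\mu,J)\;=\;\inf_{Q\colon\,\cJ(Q)=J} I(\mu,Q),
\]
as in \eqref{arancione}; convexity of $\hat I$ follows from convexity of $I$ together with the linearity of the constraint $\cJ(Q)=J$.

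I would next show that $\hat I(\mu,J)=+\infty$ whenever $(\mu,J)\notin\L_a$. Indeed, if $Q\in\L$ satisfies $\cJ(Q)=J$, properties (i)--(iii) of $\L_a$ are inherited from the analogous properties of $\L$ (using $\div Q_t=\div J_t$), (iv) of $\L_a$ is a consequence of (iv) of $\L$ together with $\cJ(Q)=J$, and (v) of $\L_a$ is forced by the convention $Q(y,z)\equiv 0$ for $(y,z)\notin E$ combined with the nonnegativity of $Q$.

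Now suppose $(\mu,J)\in\L_a$. Because $\div Q_t=\div J_t$, the continuity equation for $Q$ reduces to the one for $J$, and the minimisation over $Q$ splits pointwise in $t$ and across unordered edges: for a.e.\ $t$ and each pair of states $y,z$ with $(y,z)\in E_s$, setting $a=\mu_t(y)r(y,z;t)$, $b=\mu_t(z)r(z,y;t)$, $q_+=Q_t(y,z)$, $q_-=Q_t(z,y)$ (with $q_\pm:=0$ whenever the corresponding ordered pair lies outside $E$), I must minimise $\Phi(q_+,a)+\Phi(q_-,b)$ subject to $q_\pm\geq 0$ and $q_+-q_-=J_t(y,z)$. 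When $a,b>0$ the stationarity condition $\log(q_+/a)+\log(q_-/b)=0$ gives $q_+q_-=ab$; combined with the linear constraint this yields the unique minimum recorded in \eqref{domus}, strict convexity of $\Phi(\cdot,a)$ ensuring uniqueness. The degenerate cases ($a=0$, $b=0$, or one of the ordered pairs outside $E$) are handled directly: conditions (iv)--(v) of $\L_a$ force $J_t(y,z)$ to have the sign compatible with $q_\pm\geq 0$, and \eqref{domus} then reduces to $q_+=J_t(y,z)^+$, $q_-=J_t(y,z)^-$, which is the only admissible (hence optimal) choice. A direct check shows $Q^{J,\mu}\in\L$, producing \eqref{piscina}.

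For the alternative representation \eqref{cuore}, I would exploit the first-order identity $q_+q_-=ab$ to collapse the logarithmic terms into $\Phi(q_+,a)+\Phi(q_-,b)=(q_+-q_-)\log(q_+/a)-(q_++q_--a-b)$. Using $q_++q_-=\sqrt{J_t^2+(a_t^\mu)^2}$, $a+b=\sqrt{(J_t^\mu)^2+(a_t^\mu)^2}$, and the identity $\log(q_+/a)=\asinh(J_t/a_t^\mu)-\asinh(J_t^\mu/a_t^\mu)$, the integrand is recognised as $\Psi(J_t,J_t^\mu;a_t^\mu)$; the factor $1/2$ in \eqref{cuore} accounts for the double-counting over $E_s$, relying on the symmetry $\Psi(-u,-\bar u;a)=\Psi(u,\bar u;a)$, while edges with only one direction in $E$ correspond to $a_t^\mu=0$ and are covered by the second branch of $\Psi$. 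The main technical subtlety, and the step requiring the most care, is precisely the bookkeeping of these degenerate cases (vanishing of $\mu_t$ or of a rate) and the verification that the candidate $Q^{J,\mu}$ indeed lies in $\L$ whenever $(\mu,J)\in\L_a$.
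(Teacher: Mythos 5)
Your proposal is correct and follows essentially the same route as the paper: contraction along $(\mu,Q)\mapsto(\mu,\cJ(Q))$, identification of the effective domain $\L_a$, a pointwise-in-$t$ minimisation over each unordered edge whose stationarity condition $q_+q_-=\mu_t(y)\mu_t(z)r(y,z;t)r(z,y;t)$ combined with $q_+-q_-=J_t(y,z)$ yields \eqref{domus}, and the algebraic identity turning $\Phi(q_+,a)+\Phi(q_-,b)$ into $\Psi(J_t,J_t^\mu;a_t^\mu)$ with the factor $\tfrac12$ over $E_s$. The only differences are presentational: the paper parametrises the admissible flows via the Jordan decomposition $Q=J^++S$ with $S$ symmetric rather than working directly with $(q_+,q_-)$, and it quotes the $\Psi$-identity from \cite{BFG2} where you derive it explicitly.
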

The proof of Theorem \ref{teo3} is given in Section \ref{asia}.

\begin{remark}\label{nuovo}
  By contraction, Theorem~\ref{teo3} implies a joint LDP for the
  empirical measure and current. The corresponding convex  rate functional
  $\overline{\!\hat I}\, \colon \mc P(V)\times \bb R_a^{E_s} \to
  [0,+\infty]$ is given by
\begin{equation}\label{esilio}
\overline{\!\hat I}\,(\bar \mu, \bar J) = \inf _{(\mu,J)}\frac{1}{T_0} \hat I(\mu, J) \,,
\end{equation}
where the infimum is taken among all pairs $(\mu,J)$ in $\L_a$ such that
$\frac{1}{T_0}  \int _0^{T_0} \mu_t dt =\bar \mu$ and $ \frac{1}{T_0} \int _0^{T_0} J_t dt = \bar J$, where $\mu= \mu_t dt $ and $J= J_t dt $.
\end{remark}


\section{Stochastic entropy flow}\label{sec_EP}
In this section we assume that $(y,z) \in E$ if and only if $(z,y)\in
E$, i.e. $E=E_s$.

One usually  defines the   \emph{fluctuating entropy flow} on the time interval
$\left[0,nT_0\right]$ as   the Radon-Nikodym derivative
 \begin{equation}\label{eq:ep}
 \sigma_{nT_0}\left[X\right]=\log   \left.\frac{d\mathbb{P}}{d\left(\mathbb{P}^{B}\circ R_{nT_{0}}\right)}\right|_{\left[0,nT_{0}\right]}\bigl( (X_{s})_{s\in [0,nT_{0}]}\bigr)\, ,\end{equation}
where  $\left.\mathbb{P}\right|_{\left[0,nT_{0}\right]}$ is the law
on $D\left(\left[0,nT_{0}\right];V\right)$ of the continuous time Markov chain
with rates $r(\cdot,\cdot;t)$ and  some initial distribution   $\mu_{0}$ and $\left.\mathbb{P}^{B}\right|_{\left[0,nT_{0}\right]}$ is the law on $D\left(\left[0,nT_{0}\right];V\right)$ of another   continuous time Markov chain
with rates  $r^{B}(\cdot,\cdot;t)$, and some initial distribution  $\mu'_{0}$. Then the measure $\mathbb{P}^{B}\circ R_{nT_{0}}$ is
the pushforward measure of the law $\mathbb{P}^{B}$  in the time window $[0, nT_0]$  by $R_{nT_0}$,  $R_{nT_0}$ being
the pathways time reversal \blu{in the time window $[0,nT_{0}]$}. \rosso{Of course, definition    \eqref{eq:ep} restricts to the case when the
 Radon-Nikodym derivative is   well defined.} We further restrict to the case of $T_0$-periodic rates $r(\cdot,\cdot;t)$ and $r^B(\cdot,\cdot;t)$.
 Below
we will consider only three peculiar choices of  rates  $r^B(\cdot, \cdot;t)$:
the naive reversal (cf. Subsection \ref{sec_a1}),  the reversed protocol first used in \cite{Crooks} (cf. Subsection \ref{sec_a2}) and the  dual reversed protocol first considered for  time inhomogeneous processes in \cite{Crooks} (cf. Subsection \ref{sec_a3}).
The fact that by playing with different choices  of the backward process (i.e. the rates $r^B$  here) we find different physical quantities (excess heat, housekeeping heat, \rosso{total heat}, phase-space contraction,...) has been pointed out in \cite{Chernyak, che1} for diffusion processes and in \cite{Harris}  for pure jump Markov processes.
\rosso{We recall that the excess heat and the housekeeping heat  were first introduced by Oono and Paniconi  \cite{Oono}:  the excess heat   measures the non stationarity of the process, while the housekeeping heat measures the distance of the process from the ``instantaneous'' reversibility.  These two quantities permit to obtain a  refinement  of the second law of thermodynamics (see \cite{Chernyak,che1,Harris}).  The total heat is then the sum of the excess heat and the housekeeping heat. On the other hand, the phase-space contraction is   a quantity characterizing the irreversibility of a deterministic dynamical system \cite{GC,Ru}.  For   diffusion processes  see \cite{che1} for the fluctuation relation associated to the
 phase-space contraction and to its generalization to   obtain the fluctuation relation  of the finite time Lyapunov exponents.}

\medskip

We point out that \eqref{eq:ep} implies
directly the finite time fluctuation relation
\begin{equation}
\mathbb{P}\left(\sigma_{nT_{0}}\left[X\right]\in [\s,\s+d\s)\right)\exp\left(-\sigma\right)=\mathbb{P}^{B}\left(\sigma_{nT_{0}}^{B}\left[X\right]\in [-\sigma, -\s + d\s)\right),\label{eq:RFtf}
\end{equation}
with the backward entropy flow $\sigma_{nT_{0}}^{B}\left[X\right]$
 defined by
\begin{equation}
\sigma_{nT_{0}}^{B}\left[X\right]=\log\left.\frac{d\mathbb{P}^{B}}{d\left(\mathbb{P}\circ R_{nT_{0}}\right)}\right|_{\left[0,nT_{0}\right]}\bigl( \left(X_{s}\right)_{s\in\left[0,nT_0\right]}\bigr)\,.\label{eq:ep-1}
\end{equation}
By \blu{using Eq. \eqref{diderot}} in Section \ref{sec_RN} and the periodicity of the rates, we get
 \begin{equation} \begin{split}
\sigma_{nT_{0}}\left[X\right]& =\log \frac{\mu_{0}\left(X_{0}\right)}{\mu_0'\left(X_{nT_0}\right)} +
 \sum _{\substack{s \in (0,nT_0]:\\ X_{s-} \not = X_s}}
\log \frac{r\left(X_{s^{-}},X_{s};s\right)}{r^{B}\left(X_{s},X_{s^{-}};T_{0}-s\right)}\\
&-\int_{0}^{nT_{0}}ds\left[r\left(X_{s};s\right)-r^{B}\left(X_{s};T_{0}-s\right)\right]\,.\label{eq:EF-1}
\end{split}
\end{equation}

%
%
%
We point out that, since $V$ is finite, the boundary term ${\rm b.t.}= \log \frac{\mu_{0}\left(X_{0}\right)}{\mu_0'\left(X_{nT_0}\right)} $ will not play any role in the large deviation limit. We now provide three examples of entropy flows relevant in statistical physics, and show that - apart negligible boundary terms - they can be expressed by contraction  from the empirical extended  measure and/or flow.
\subsubsection{Entropy flow from naive reversal}\label{sec_a1}
We take the \emph{identity reversal} $r^{B}(y,z;t):=r(y,z;t)$ and we write $\sigma_{nT_{0}}^{\rm naive}$ for the associated entropy flow.
We define the functional $S_{\rm naive}(\mu,Q)$ on the space $\cM_*$  introduced in \eqref{ho_fame} as follows:
\begin{equation}\label{eq:SNR}
\begin{split}
S_{\rm naive}(\mu,Q)
:&= \sum _{(y,z)\in E}\int Q(y,z,ds) \log  \frac{r(y,z;s)}{r(z,y;T_{0}-s)}\\
&-\sum_y \int \mu(y,ds) [ r(y;s)-r(y,T_0-s)]\,.
\end{split}
\end{equation}
Then the entropy flow fulfills the identity
\begin{equation}\label{eq:SNR2}
\frac{1}{n}\sigma_{nT_{0}}^{\rm naive}=\frac{{\rm b.t.}}{n}+S_{\rm naive}\bigl(\mu^{(n)},Q^{(n)}\bigr)\,.
\end{equation}
\subsubsection{Total entropy flow from reversed protocol}\label{sec_a2}
As rates $r^B$ \blu{(denoted here by $r^{\rm R}$)}
 we  choose  the \emph{reversed protocol}, i.e. we take the rates   \begin{equation}\label{crema}
r^{\rm R}(y,z;t):=r(y,z;T_{0}-t)\,.
\end{equation} The resulting entropy flow, usually called   \emph{total entropy flow}, will be denoted by $\sigma_{nT_{0}}^{\text{tot}}$. Defining the functional $S_{\text{tot}}$ as
\begin{equation}\label{eq:SPR}
S_{\text{tot}}(Q)
:= \sum _{(y,z)\in E} \int Q(y,z,ds) \log  \frac{r(y,z;s)}{r (z,y;s)}
\end{equation}
for $Q \in  \mc M_+ (E \times \cS_{T_0})$,
we get
 \begin{equation}
\frac{1}{n}\sigma_{nT_{0}}^{\text{tot}} =\frac{{\rm b.t.}}{n}+S_{\text{tot}}\bigl( Q^{(n)}\bigr)\,.\label{eq:SPR2}
\end{equation}
The above   entropy production  has been investigated in
\cite{Proesmans2,Ray} for  time periodic processes.

\subsubsection{Entropy flow in excess }\label{sec_a3}
 Given $t$ we write $w_t$ for the  \emph{accompanying distribution} (cf. \cite{Hanggi}), which is defined as the
unique invariant distribution  of the time homogeneous (and continuous time) Markov chain
 with frozen jump rates $r(\cdot, \cdot;t)$. Due to our assumptions (A1) and (A2) the distribution $w_t$ is well defined, and moreover it is strictly positive on each state of $V$.

As rates $r^B$ \blu{(denoted here by $r^{\rm DR}$)}
 we then choose  the \emph{dual reversed protocol}:
\begin{equation}
r^{\rm DR} (y,z;t)=w_{T_{0}-t}^{-1}(y) r(z,y;T_{0}-t )w_{T_{0}-t}(z)\,.\label{eq:DR}
\end{equation}
The resulting entropy flow, denoted by $\sigma_{nT_{0}}^{\rm ex}$ and called \emph{excess entropy flow}, is related to    the excess heat discussed in  \cite{HS,Oono}. By the  invariance of $w_t$, from \eqref{eq:EF-1} one easily gets the simplified expression
 \begin{equation}\label{eq:EFDR}
\sigma_{nT_{0}}^{\rm ex}[X] = {\rm b.t.}+\sum
_{\substack{s \in (0,n T_0]:\\ X_{s-} \not = X_s}}
\log \frac{w_{s}(X_{s})}{w_{s}\bigl(X_{s^{-}}\bigr)}\,.\end{equation}
At the cost of a boundary term (irrelevant in the LD limit) we find
\begin{equation}\label{eq:EFDR-1}
\sigma_{nT_{0}}^{\rm ex}\left[X\right]={\rm b.t.}'-\int_{0}^{nT_{0}}ds\left[\partial_{s}\left(\log w_{s}\right)\right]\left(X_{s}\right)\,,\end{equation}
which is the quantity considered in the time periodic set-up by Schuller et al. in \cite{Schuller}.
By defining
the functional
\begin{equation}\label{eq:SPR-1}
S_{\rm ex}(\mu):=- \sum _y \int \mu(y,ds)  \partial_s
\log\bigl( w_{s} (y) \bigr)
\end{equation}
for $\mu \in  \mc M_{+,T_0}( V \times \cS_{T_0} )$, we can write
\begin{equation}
\frac{1}{n}\sigma_{nT_{0}}^{\rm ex} =\frac{b.t.'}{n}+S_{\rm ex}\bigl(\mu^{(n)}\bigr)\,.
\end{equation}

\section{Gallavotti-Cohen duality relations}\label{sec_GC}
As in Section \ref{sec_EP}  we assume  that $(y,z) \in E$ if and only if $(z,y)\in
E$, i.e. $E=E_s$.

\medskip

We recall that, given $(\mu,Q)\in \cM_*$,
it holds $I(\mu,Q)=+\infty$ if $(\mu,Q)\not \in \L$ \rosso{(see  \eqref{ho_fame} and \eqref{pakistano})}. Hence, for the
analysis of   Gallavotti-Cohen duality relations, we restrict to
$(\mu,Q)\in\L$.

\begin{definition}
Given $(\mu,Q)\in \L$, with $\mu = \mu_t dt $ and $Q=Q_t dt $, we define the transformed  element $(\theta \mu, \theta Q)\in \L$ as $\theta \mu= (\theta \mu_t) dt $, $ \theta Q= (\theta Q_t) dt $ where
\begin{equation}
\theta \mu_t:= \mu_{T_0-t}\,, \qquad \theta Q_t (y,z):= Q_{T_0-t} (z,y)\,.
\end{equation}
\end{definition}
It is simple to check that $(\theta \mu, \theta Q)$ is indeed an element of $\L$.

\medskip
 In what follows we write  $I(\mu,Q; r)$ for the joint LD  rate functional of Theorem \ref{teo2} referred to the Markov chain with jump rates $r(y,z;t)$.
 Similarly, we add the reference to the jump rates in the entropy production functions by writing $S_{\rm naive} (\mu,Q;r)$, $S_{\rm tot} (Q;r)$ and $S_{\rm ex} (\mu;r)$
  (recall the notation introduced in Sections \ref{sec_a1}, \ref{sec_a2} and \ref{sec_a3}). By means of the contraction principle
 one derives from Theorem \ref{teo2} the LDP for the entropy production functions   $S_{\rm naive}(\mu,Q;r)$,
 $ S_{\text{tot}}(Q;r)$ and $S_{\rm ex}(\mu;r)$ with LD functionals given respectively by
\begin{align*}
\cI _{\rm naive}(s;r) &= \inf \{ I(\mu,Q;r)\,:\,  S_{\rm naive}(\mu,Q;r) =s\}\,,\\
\cI _{\rm tot}(s;r) &= \inf \{ I(\mu,Q;r)\,:\,   S_{\rm tot} (Q;r)=s\}\,,\\
\cI _{\rm ex}(s;r) &= \inf \{ I(\mu,Q;r)\,:\,   S_{\rm ex} (\mu;r) =s\} \,.
\end{align*}

 \begin{theoremA}\label{teo_GC1} For any $(\mu,Q)\in \L$ we have the following level 2.5  Gallavotti-Cohen  duality relations:
\begin{align}
 I(\theta \mu, \theta Q; r)& =I(\mu,Q; r)  +S_{\rm naive} (\mu,Q;r)\,,\label{uva1} \\
 I(\theta \mu, \theta Q; r^{\rm R})&=I(\mu,Q; r)+S_{\rm tot} (Q;r)   \,,\label{uva2}\\
 I(\theta \mu, \theta Q; r^{\rm DR})& =I(\mu,Q; r) +S_{\rm ex} (\mu;r)      \,.\label{uva3}
\end{align}
Moreover, for any real $s$
we have  by contraction  the following  Gallavotti-Cohen  duality relations:
\begin{align}
\cI _{\rm naive}(-s;r) &= \cI _{\rm naive}(s;r) +s \,,\label{rio1}\\
\cI _{\rm tot}(-s; r^{\rm R}) &=  \cI _{\rm tot}(s;r )+s  \,,\label{rio2}\\
\cI _{\rm ex}(-s;r^{\rm DR}  ) & = \cI _{\rm ex}(s;r)+s\,.\label{rio3}
\end{align}
\end{theoremA}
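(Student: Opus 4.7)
The plan is to establish the three level 2.5 identities \eqref{uva1}--\eqref{uva3} by direct algebraic manipulation of the explicit formula \eqref{pakistano} for the rate functional $I$, and then to deduce the three fluctuation relations \eqref{rio1}--\eqref{rio3} by contraction, using that $\theta$ is an involution of $\Lambda$ flipping the sign of each entropy production functional.

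For each of \eqref{uva1}--\eqref{uva3}, the first step is to compute $I(\theta\mu,\theta Q;r^*)$ with $r^*\in\{r,r^{\rm R},r^{\rm DR}\}$. Performing the change of variable $s=T_0-t$ in the time integral and relabelling edges via $(y,z)\mapsto(z,y)$ (which is permitted since $E=E_s$), one rewrites each $I(\theta\mu,\theta Q;r^*)$ as $\int_0^{T_0}\sum_{(y,z)\in E}\Phi(Q_s(y,z),p^*_s(y,z))\,ds$, where $p^*_s(y,z)=\mu_s(z)r(z,y;T_0-s)$ for $r^*=r$, $p^*_s(y,z)=\mu_s(z)r(z,y;s)$ for $r^*=r^{\rm R}$, and $p^*_s(y,z)=\mu_s(z)w_s(y)r(y,z;s)/w_s(z)$ for $r^*=r^{\rm DR}$. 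Subtracting $I(\mu,Q;r)$ and using the pointwise identity $\Phi(q,p_1)-\Phi(q,p_2)=q\log(p_2/p_1)+p_1-p_2$, the difference splits into a ``log-$\mu$'' piece $\int_0^{T_0}\sum_{(y,z)}Q_s(y,z)[\log\mu_s(y)-\log\mu_s(z)]\,ds$, a ``log-rate'' piece depending on $r^*$, and a ``measure'' piece.

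The key cancellations are then as follows. The log-$\mu$ piece always vanishes, using the continuity equation $\partial_s\mu_s+\div Q_s=0$ from Definition \ref{waffel}(iii) together with the $T_0$-periodicity of $\mu$: integrating the chain-rule identity $\partial_s(\mu_s(y)\log\mu_s(y)-\mu_s(y))=-\log\mu_s(y)\,\div Q_s(y)$ over $\cS_{T_0}$ gives $\sum_y\int_0^{T_0}\log\mu_s(y)\,\div Q_s(y)\,ds=0$, which after a discrete integration by parts on the graph is exactly $-\int_0^{T_0}\sum_{(y,z)}Q_s(y,z)[\log\mu_s(y)-\log\mu_s(z)]\,ds$. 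For \eqref{uva1} the log-rate and measure pieces match the two summands of \eqref{eq:SNR} after relabelling; for \eqref{uva2} the measure piece vanishes by the symmetry $E=E_s$, leaving exactly $S_{\rm tot}(Q;r)$; for \eqref{uva3} the measure piece vanishes thanks to the defining property $\sum_z[w_s(z)r(z,y;s)-w_s(y)r(y,z;s)]=0$ of the accompanying distribution, and the surviving log-rate piece $\int_0^{T_0}\sum_{(y,z)}Q_s(y,z)[\log w_s(z)-\log w_s(y)]\,ds$ becomes $S_{\rm ex}(\mu;r)$ via a second application of the continuity equation, now with test function $f(y,s)=\log w_s(y)$.

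To obtain \eqref{rio1}--\eqref{rio3}, I first check (by the same $s=T_0-t$ substitution) that $S_{\rm naive}(\theta\mu,\theta Q;r)=-S_{\rm naive}(\mu,Q;r)$, $S_{\rm tot}(\theta Q;r^{\rm R})=-S_{\rm tot}(Q;r)$, and $S_{\rm ex}(\theta\mu;r^{\rm DR})=-S_{\rm ex}(\mu;r)$. Since $\theta$ is an involution of $\Lambda$, performing the change of variable $(\mu,Q)\mapsto(\theta\mu,\theta Q)$ in the infima defining $\cI_{\rm naive}(-s;r)$, $\cI_{\rm tot}(-s;r^{\rm R})$, $\cI_{\rm ex}(-s;r^{\rm DR})$ and applying the corresponding identity \eqref{uva1}--\eqref{uva3} yields respectively $\cI_{\rm naive}(s;r)+s$, $\cI_{\rm tot}(s;r)+s$, $\cI_{\rm ex}(s;r)+s$. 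I expect the only genuine technical obstacle to be the rigorous use of the continuity equation with the non-smooth test functions $\log\mu$ and $\log w$; this should be handled by standard regularization, using Definition \ref{waffel}(iv) to bypass the zeros of $\mu$ and the strict positivity and measurable-in-$s$ regularity of $w_s$ (needed in any case for $\partial_s\log w_s$, and hence $S_{\rm ex}$, to be well-defined).
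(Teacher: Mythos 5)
Your proposal is correct and follows essentially the same route as the paper's proof in Section \ref{pizza}: the change of variable $s\mapsto T_0-s$ with edge relabelling, the cancellation of the $\log\mu$ contribution via the continuity equation and $T_0$-periodicity (the paper's Eq.~\eqref{pigna2}), the invariance of $w_s$ for the dual reversed case followed by a second integration by parts to produce $S_{\rm ex}$, and contraction using that $\theta$ is an involution of $\L$ flipping the sign of each entropy functional. Your bookkeeping via $\Phi(q,p_1)-\Phi(q,p_2)=q\log(p_2/p_1)+p_1-p_2$ is only a cosmetic repackaging of the same computation.
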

The above duality relations are new with exception of \eqref{rio2} which appears also in \cite{Ray,Sinitsyn}.
The proof of Theorem \ref{teo_GC1} is given in Section \ref{pizza}.

\smallskip

If we have a time symmetric protocol, i.e. $r\left(y,z;T_{0}-t\right)=r\left(y,z;t\right)$,  then the naive entropy flow  and the total entropy flow  are identical and the  duality relations  \eqref{rio1} and \eqref{rio2} become identical.
 If the accompanying distribution satisfies  the instantaneous detailed balance such that the relation \eqref{eq:DR} becomes
$r^{DR}\left(y,z;t\right)=r\left(y,z;T_{0}-t\right)$, then the excess entropy flow  and the total entropy flow  are identical. In particular,  the duality relations \eqref{rio2} and \eqref{rio3}  become  identical.
Finally, we point out   that   in \cite{Schuller} the  Gallavotti-Cohen relation has been  experimentally checked
in a  context where the two previous situations  both take place, hence in that context the three duality relations \eqref{rio1}, \eqref{rio2} and \eqref{rio3}   are identical.

\medskip

By the contraction principle,
the duality relations in  Theorem \ref{teo_GC1}   imply  some analogous relations for the extended current. To  this aim,
we define   $\theta J_t(y,z)= J_{T_0-t}(z,y)= - J_{T_0-t}(y,z)$ and write
$\hat I(\mu,J; r)$ for the LD rate functional  $\hat I(\mu,J)$ of $(\mu^{(n)}, J^{(n)})$ with jump rates $r(\cdot,\cdot;\cdot)$ \rosso{(see \eqref{arancione} and \eqref{cuore})}.
 In particular,
one  derives the following corollary:
\begin{corollary}\label{cor_GC} For any $(\mu,J)\in \L_a$ it holds
\begin{align}
 \hat I(\theta \mu, \theta J;  r^{\rm R} ) -\hat I(\mu,J; r)&= \frac{1}{2} \sum_{(y,z)\in E}\int_0^{T_0}J_t(y,z)\log\frac{r(y,z;t)}{r(z,y;t)}\,dt\,, \label{luci1}\\
 \hat I(\theta \mu, \theta J;  r^{\rm DR} ) -\hat I(\mu,J; r)&=S_{\rm ex} (\mu;r)  \,. \label{luci2}
\end{align}

\end{corollary}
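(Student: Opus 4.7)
The plan is to derive both identities by projecting the level 2.5 duality relations in Theorem \ref{teo_GC1} (items \eqref{uva2} and \eqref{uva3}) through the contraction $\cJ$. Recall that $\hat I(\mu,J;r)=\inf\{I(\mu,Q;r):\cJ(Q)=J\}$, so I want to take the infimum of both sides of \eqref{uva2} and \eqref{uva3} over admissible flows $Q$.

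The first step is to check that the map $Q\mapsto \theta Q$ is a bijection between the fibers $\{Q:\cJ(Q)=J\}$ and $\{Q':\cJ(Q')=\theta J\}$. Using the antisymmetry of $J$ and the definition $\theta Q_t(y,z)=Q_{T_0-t}(z,y)$, a direct computation gives $\cJ(\theta Q)_t(y,z)=Q_{T_0-t}(z,y)-Q_{T_0-t}(y,z)=-J_{T_0-t}(y,z)=J_{T_0-t}(z,y)=\theta J_t(y,z)$. Hence the change of variable $Q'=\theta Q$ is an involution on admissible flows and preserves the infimum.

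The second step is to rewrite the right hand sides of \eqref{uva2} and \eqref{uva3} so that they depend on $Q$ only through $J=\cJ(Q)$. For $S_{\rm ex}$ this is immediate since $S_{\rm ex}(\mu;r)$ has no $Q$-dependence. For $S_{\rm tot}$, I use $E=E_s$ to pair each edge with its reverse and symmetrize:
\begin{equation*}
S_{\rm tot}(Q;r)=\sum_{(y,z)\in E}\int Q(y,z,ds)\log\frac{r(y,z;s)}{r(z,y;s)}=\frac{1}{2}\sum_{(y,z)\in E}\int J(y,z,ds)\log\frac{r(y,z;s)}{r(z,y;s)},
\end{equation*}
where I exploited that $\log[r(y,z;s)/r(z,y;s)]$ is antisymmetric under $(y,z)\leftrightarrow(z,y)$, so only the antisymmetric part of $Q$, namely $J$, survives. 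Call this expression $\widetilde S_{\rm tot}(J;r)$.

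Combining the two steps, taking the infimum of \eqref{uva2} over $Q$ with $\cJ(Q)=J$ and using the bijection $Q\mapsto\theta Q$ yields
\begin{equation*}
\hat I(\theta\mu,\theta J;r^{\rm R})=\inf_{Q:\cJ(Q)=J}\bigl[I(\mu,Q;r)+S_{\rm tot}(Q;r)\bigr]=\hat I(\mu,J;r)+\widetilde S_{\rm tot}(J;r),
\end{equation*}
which is exactly \eqref{luci1}. The same argument applied to \eqref{uva3}, together with the $Q$-independence of $S_{\rm ex}$, gives \eqref{luci2}. There is no real obstacle here; the only delicate point is step two, namely verifying that when $E=E_s$ the total entropy flow functional factors through the current, so that it can be pulled out of the infimum.
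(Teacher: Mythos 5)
Your proposal is correct and follows essentially the same route as the paper: the paper's proof likewise rests on the two observations that $\cJ(Q)=J$ implies $\cJ(\theta Q)=\theta J$ and that $S_{\rm tot}(Q;r)$ depends on $Q$ only through $J=\cJ(Q)$ (giving the symmetrized expression $\tfrac12\sum_{(y,z)\in E}\int_0^{T_0}J_t(y,z)\log\frac{r(y,z;t)}{r(z,y;t)}\,dt$), and then takes the infimum of both sides of \eqref{uva2} and \eqref{uva3} over the fiber $\{Q:\cJ(Q)=J\}$. Your explicit check that $\theta$ is a bijection between the fibers over $J$ and over $\theta J$ is a slightly more careful rendering of a point the paper leaves implicit.
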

\blu{
\begin{proof} Recall the map $\cJ$ defined in \eqref{gb} and note that
\begin{equation}\label{crepe1}
\cJ(Q)=J \Longrightarrow
\cJ(\theta Q)= \theta J\,.
\end{equation}
Observe also that,  if $(\mu,Q)\in \L$ is such that $\cJ(Q)=J$, then
\begin{equation}\label{crepe2}
S_{\rm tot} (Q;r)=\frac{1}{2} \sum_{(y,z)\in E}\int_0^{T_0}J_t(y,z)\log\frac{r(y,z;t)}{r(z,y;t)}\,dt\,.
\end{equation}
The duality relation \eqref{luci1} then follows  from \eqref{arancione} by taking the infimum in both sides of  \eqref{uva2} among all $Q$ with
$(\mu,Q)\in \L$ and $\cJ(Q)=J $, and by using \eqref{crepe1} and \eqref{crepe2}. The duality relation \eqref{luci2} follows by the same procedure applied to  \eqref{uva3}.
\end{proof}
}
We remark that as  the naive entropy flow \eqref{eq:SNR}  cannot be expressed (up to boundary terms) as contraction of the extended empirical measure and current,  there is no version of Corollary \ref{cor_GC}   (i.e. with extended current) for the duality relation \eqref{uva1}.
Finally, by applying once again the contraction principle to Corollary \ref{cor_GC}  we get    other duality relations (we omit the proof since simple):
\begin{corollary}\label{negramaro} It holds
\begin{align}
  \blu{I_{c}(\theta J;  r^{\rm R} ) -I_{c}(J; r)}&= \frac{1}{2} \sum_{(y,z)\in E}\int_0^{T_0}J_t(y,z)\log\frac{r(y,z;t)}{r(z,y;t)}\,dt\,,\label{luci3}\\
\blu{I_{m} (\theta \mu; r^{\rm DR} ) -I_{m}(\mu; r)}&=S_{\rm ex} (\mu;r)\,, \label{luci4}
\end{align}
\blu{where $I_{c}$ and $I_{m}$ denote the LD rate functionals of the extended empirical current and the extended empirical measure, respectively.}
\end{corollary}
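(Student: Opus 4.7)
The plan is to obtain both identities by taking an appropriate infimum on each side of the corresponding relation from Corollary \ref{cor_GC} and using that the involution $\theta$ is a bijection on the relevant space of measures. Since by Theorem \ref{teo3} the pair $(\mu^{(n)}, J^{(n)})$ satisfies a joint LDP with good rate functional $\hat I(\mu,J;r)$, the contraction principle yields
\begin{equation*}
I_{c}(J;r) \;=\; \inf_{\mu}\hat I(\mu,J;r), \qquad I_{m}(\mu;r) \;=\; \inf_{J}\hat I(\mu,J;r).
\end{equation*}

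For \eqref{luci3} I would start from the pointwise identity \eqref{luci1} and take the infimum over $\mu$ on both sides. The right-hand side of \eqref{luci1} depends only on $J$ and is therefore constant with respect to the minimization, so it passes through the infimum unchanged. On the left, $\inf_{\mu} \hat I(\mu,J;r) = I_{c}(J;r)$ by definition, while
\begin{equation*}
\inf_{\mu}\hat I(\theta\mu,\theta J;r^{\rm R}) \;=\; \inf_{\mu'}\hat I(\mu',\theta J;r^{\rm R}) \;=\; I_{c}(\theta J;r^{\rm R}),
\end{equation*}
after the change of variables $\mu'=\theta\mu$, which is a bijection on $\mc M_{+,T_0}(V\times\cS_{T_0})$. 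One only has to check that this change of variables preserves the constraint $(\mu,J)\in\L_a$ (i.e.\ that $(\theta\mu,\theta J)\in\L_a$), which is immediate from Definition \ref{deficit}: the continuity equation is preserved under simultaneous time reversal of $\mu_t$ and of $J_t$ (together with the antisymmetric swap of the endpoints), and items (i), (ii), (iv), (v) are preserved edge-by-edge. Combining these two observations gives \eqref{luci3}.

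The argument for \eqref{luci4} is entirely analogous: take the infimum over $J$ on both sides of \eqref{luci2}. The right-hand side $S_{\rm ex}(\mu;r)$ depends only on $\mu$ and is constant under this infimum. On the left, the definition of $I_{m}$ handles $\inf_{J}\hat I(\mu,J;r)$, and on the other term the substitution $J'=\theta J$, which is a bijection on $\mc M_{a}(E_{s}\times\cS_{T_0})$ again preserving $\L_a$, gives $\inf_{J}\hat I(\theta\mu,\theta J;r^{\rm DR}) = I_{m}(\theta\mu;r^{\rm DR})$.

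There is no real obstacle here; the only point that deserves a line of verification is that $\theta$ maps $\L_a$ to itself (for both the $\mu$-coordinate and the $J$-coordinate separately, since the infima are taken in only one variable at a time while the other is fixed). This follows directly from inspecting Definition \ref{deficit}, using that the weak continuity equation $\partial_{t}\mu_{t}+\div J_{t}=0$ is invariant under $t\mapsto T_{0}-t$ combined with the antisymmetric swap $J_{t}(y,z)\mapsto J_{T_{0}-t}(z,y)$, and that the sign conditions in (iv)--(v) are symmetric in $E=E_s$.
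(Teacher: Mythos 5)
Your proof is correct and is exactly the argument the paper intends (the paper omits the proof, saying only that the corollary follows ``by applying once again the contraction principle to Corollary \ref{cor_GC}''): one minimizes \eqref{luci1} over $\mu$ and \eqref{luci2} over $J$, using that the right-hand sides are constant in the minimized variable and that $\theta$ is a bijection preserving $\L_a$. One small caveat on your verification of the latter point: condition (iv) of Definition \ref{deficit} is \emph{not} preserved by a bare edge-by-edge sign check, since $\theta$ flips the sign of $J_t(y,z)$ and would turn ``$\leq 0$'' into ``$\geq 0$''; the correct observation is that conditions (iii) and (iv) together force $J_t(y,z)=0$ for a.e.\ $t$ in $\{\mu_t(y)=0\}$ (on that set $\partial_t\mu_t(y)=0$, hence $\div J_t(y)=0$, hence all the nonpositive terms $J_t(y,z)$ vanish), after which the invariance under $\theta$ is clear.
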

  The first relation is the Gallavotti-Cohen relation for the LD rate functional of the extended empirical current only and the second  relation  is a level 2-duality relation for the LD rate functional of the extended empirical density only. We are not aware of previously derived relation of the type of  (\ref{luci4})  even in time homogeneous set-up.

Finally we  point out  that the LD rate functional  $\bar I (\bar \mu,\bar Q)$   (cf. \eqref{aiuto}) and  $\overline{\!\hat I}\,(\bar \mu, \bar J)$  (cf. \eqref{esilio})  do not satisfy duality relations resulting from   a naive contraction of  the relations in Theorem \ref{teo_GC1}.  Indeed,  the three  entropy flows  cannot be expressed as contraction of the empirical measure and empirical flow/current (recall  Definitions  \ref{pierpibello} and  \ref{loribello}).

\section{Two state systems}\label{sec_2stati}
We consider the simplest possible system, that is a two state ($V=\{0,1\}$) chain.  In this case the model is completely
determined by the two periodic functions $r_t(0,1)$ and $r_t(1,0)$ that fix the jump rates (for simplicity of notation, sometimes the  time variable $t$ will appear as subindex in the rates). Even if elementary, this framework has however interesting and non trivial physical applications.

\rosso{We list some relevant examples:}

\begin{itemize}
\item
In \cite{Proesmans2}  we have a  quantum dot  with one single active energy level periodically modulated that
corresponds to a two state Markov chain with rates
\begin{equation}
\begin{cases}
r_{t}(0,1)=\frac{\varGamma}{1+\exp(x_{t})}\,,\\
r_{t}(1,0)=\frac{\varGamma\exp(x_{t})}{1+\exp(x_{t})}\,,
\end{cases}\textrm{}\label{eq:qd}
\end{equation}
where $x_{t}$ is time periodic and related to the energy of the quantum
dot, the chemical potential and the temperature of the bath.

\item
In \cite{Schuller} we have a single defect center in natural IIa-type
diamond excited by a red and a green laser with time periodic
intensity. The corresponding rates are
\begin{equation}
\begin{cases}
r_{t}(0,1)=a_{0}\bigl(1+\gamma\sin\bigl(\frac{2\pi}{T_{0}}t\bigr)\bigr)\,,\\
r_{t}(1,0)=b_{0}\,.
\end{cases}\textrm{}\label{eq:sdc}
\end{equation}

\item
In \cite{McNamara} we have a two state model of stochastic resonance given by
\begin{equation}
\begin{cases}
r_{t}(0,1)=\exp\bigl(-k\cos\bigl(\frac{2\pi}{T_{0}}t\bigr)\bigr)\,,\\
r_{t}(1,0)=\exp\bigl(k\cos\bigl(\frac{2\pi}{T_{0}}t\bigr)\bigr)\,.
\end{cases}\label{eq:sr-1}
\end{equation}

\item
In \cite{Verley} it is discussed a piecewise constant and symmetric protocol
\begin{equation}
\begin{cases}
r_{t}(0,1)=\exp\bigl(-h_{t}\bigr)\,,\\
r_{t}(1,0)=\exp\bigl(h_{t}\bigr)\,,
\end{cases}\textrm{ with }h_{t}=\begin{cases}
h_{0}-a & \textrm{if  }0\leq t\leq\alpha T_{0}\,,\\
h_{0}+a & \textrm{if }\alpha T_0\leq t\leq T_{0}\,,
\end{cases}\label{eq:gat-1}
\end{equation}
for some $\a \in (0,1)$.
\end{itemize}

Let us now discuss some results concerning the general situation.
In all this section we restrict to elements
\blu{$\mu,Q,J$ with $(\mu,Q)\in \L$ and $(\mu, J)\in \L_a$,   without further mention.}
For convenience  we call $\mu_t:=\mu_t(0)$, $Q_t:=Q_t(0,1)$ and
$J_t:=J_t(0,1)$ (note that this is different from the usual notation);
accordingly, the jump rates are here denoted by $r_t(0,1)$ and $r_t(1,0)$.
The continuity equation is simply $\partial_t\mu_t+J_t=0$.
\blu{Note that,  by the above continuity equation,   the knowledge of $\mu_t$ and $Q_t$ allows to recover
 \begin{equation}\label{niccolo}
 \mu_t(1)= 1-\mu_t(0)\,, \qquad Q_t(1,0)= \partial_t\mu_t + Q_t\,,\qquad J_t=-\partial_t \mu_t\,.
 \end{equation}
  On the other hand, given real functions $\mu_t$ and $Q_t$ defined for $t \in \cS_{T_0}$ and setting \eqref{niccolo},  we have that  $(\mu, Q)\in \L$ if  and only if  $\mu_t \in [0,1]$, $Q_t\geq 0$  and $\partial_t \mu_t+Q_t\geq 0$.  }

\blu{Moreover (recall that we restrict to   $(\mu,Q)\in \L$)}
the LD  rate functional of Theorem \ref{teo2}  becomes
  \begin{equation}\label{spin}
\begin{split}
I(\mu,Q)=  \int_0^{T_0}\Big[Q_t\log\frac{Q_t}{\mu_t r_t(0,1)} & +(\partial_t \mu_t+Q_t)\log\frac{\left(\partial_t \mu_t+Q_t\right)}{(1-\mu_t)r_t(1,0)} \\
  & +\mu_t r_t(0,1)+(1-\mu_t)r_t(1,0)-2Q_t\Big]\,dt\,.
\end{split}
\end{equation}


\medskip
In this case, one can compute explicitly the LD rate functional
$\blu{I_{m}}(\mu)=\inf_{Q}I(\mu,Q) $ associated to the extended empirical
measure $\mu^{(n)}$.
We have that
$\blu{I_m}(\mu)$ coincides in this case with the joint LD functional  for measure and current, i.e
$\blu{I_m}(\mu)=\hat I(\mu,J)$.  This is because
the current is completely determined by the density using $\partial_t\mu_t=-J_t$ (this fact is indeed true  for more general Markov chains, indeed it is enough that  the \rosso{undirected} graph obtained from the transition graph by disregarding the orientation \rosso{ and identifying multiple edges} is a tree).   The rate functional $\blu{I_m}(\mu)$ is  therefore obtained
as
\begin{equation}\label{lenticchie}
\blu{I_m}(\mu)=I(\mu,Q(\mu,\partial_t \mu))\,,
\end{equation} where (cf. \eqref{domus})
\begin{equation}\label{mercato}
Q_t(\mu,\partial_t\mu):=\frac{-\partial_t\mu_t+\sqrt{\left(\partial_t\mu_t\right)^2+4\mu_t(1-\mu_t)r_t(0,1)r_t(1,0)}}{2}\,.
\end{equation}

\rosso{We point out that, in general, given $(\mu,Q) \in \L$  the  level 2.5   rate functional  $I(\mu,Q)$ is   the  time integration of $I_t(\mu_t, Q_t)$ and the latter is related  to the  level 2.5   rate functional (for the non-extended empirical measure and flow) with  frozen jump rates $r(\cdot, \cdot;t)$. One could wonder if the same property holds for the  level 2    rate functional $\blu{I_m}(\mu)$. In particular, for  2-state Markov chains, one could wonder if $\blu{I_m}(\mu)$ equals }
\begin{equation}\label{chch}
\int_0^{T_0} I_t^{\textrm{frozen}}(\mu_t)\, dt=\int_0^{T_0}\left(\sqrt{\mu_{t}r_t\left(0,1\right)}-\sqrt{\left(1-\mu_{t}\right)r_{t}\left(1,0\right)}\right)^2\,dt\,.
\end{equation}
Formula \eqref{chch}  follows by the explicit form of the level 2
rate functional  for a 2-state chain, which is always reversible \cite{DV4}.
\rosso{This property (i.e. the identity between  $I_m(\mu)$ and  \eqref{chch}) does not hold in general.
Indeed,  since  by \eqref{mercato}
\begin{equation}
Q_t(\mu,0)= \sqrt{\mu_t(1-\mu_t)r_t(0,1)r_t(1,0)}
\end{equation}
 it holds
\begin{equation}\label{popov}
 I_t^{\textrm{frozen}}(\mu_t)= I_t ( \mu_t, Q_t(\mu,0) ) \,,
 \end{equation}
 where $I_t(\cdot, \cdot)$ denotes the integrand in the r.h.s. of \eqref{spin}.
 The above identity  \eqref{popov}  and \eqref{lenticchie} imply  that   $I_m (\mu)$ equals \eqref{chch} when  $\mu$ is constant in time (even with time-dependent rates), and implies that   the zero-th order term  of the formal expansion in $\partial_t \mu$ of $I_m(\mu)=I(\mu,Q(\mu,\partial_t \mu))$  coincides with \eqref{chch}.
}  \smallskip

  In \cite{Verley}  the LD rate  functional of the excess entropy flow  (called there ``cumulated work'')  for a two state model with a time symmetric piecewise constant protocol  is computed explicitly (cf.~Equation (20) there). This explicit level 1 LD rate functional  \blu{could} be obtained by the contraction from  our previous formulas.
\smallskip

The 2-state case is simple enough to allow also an explicit
computation of the non-equilibrium oscillatory state $\pi$. By a
direct computation we have \begin{align*}
& \pi_t(0)=\frac{e^{-\Gamma_t}}{1-e^{-\Gamma_{T_0}}}\left[\int_0^tr_s(1,0)e^{\Gamma_s}\,ds+
e^{-\Gamma_{T_0}}\int_t^{T_0}r_s(1,0)e^{\Gamma_s}\,ds\right]\,,\\
&\blu{  \pi_t(1)=\frac{e^{-\Gamma_t}}{1-e^{-\Gamma_{T_0}}}\left[\int_0^tr_s(0,1)e^{\Gamma_s}\,ds+
e^{-\Gamma_{T_0}}\int_t^{T_0}r_s(0,1)e^{\Gamma_s}\,ds\right]\,,}
\end{align*}
where $\Gamma_t:=\int_0^t\left[r_s(0,1)+r_s(1,0)\right]\,ds$. \blu{Indeed,  it is simple to verify that
$\pi_t(0)\geq 0 $, $\pi_t(1)\geq 0$, $\pi_t(0)+\pi_t(1) =1$ and that  the continuity equation, which reduces to
$\partial_t \pi_t(0)+ \pi_t(0) r_t(0,1)-\pi_t (1) r_1 (1,0)=0$, is fulfilled.  Note
that \cite[Prop. 3.13]{FGR} provides an alternative formula for $\pi_t$.} Recall that $I(\mu,Q)$ is zero when $\mu_t (y)=\pi_t(y)$ and $Q_t(y,z)= \pi_t (y) r_t(y,z)$.

\smallskip
From now on we restrict to the special case
$r_t:=r_t(0,1)=r_t(1,0)$. In this case it is possible to obtain an
explicit expression for the rate functional \blu{$\bar I_{f} \left(\bar
 Q\right)$} of the empirical flow $\bar Q_T$ when $T\to +\infty$ (see
Remark \ref{parto}).
By the graphical construction,  since the jump rates  are the same, we have that
$\bar Q_T$ coincides up to negligible terms with $\frac{\mathcal N_T}{2T}$ where $\mathcal N_T$
is a non-homogeneous Poisson process with periodic intensity  given by  $r_t$. When $T=nT_0$
we can write $\mathcal N_T=\sum_{i=1}^nY_i$, where the $Y_i$ are i.i.d Poisson
random variables of parameter $\int_0^{T_0}r_tdt$. The variable $Y_i$ represents the
number of points in the interval $((i-1)T_0,iT_0]$.   Using the
classic Cram\'er's theorem we deduce  that
\begin{equation}\label{ididimarzo}
\blu{\bar I_f}(\bar Q)=2\bar Q\log\left[\frac{2\bar Q}{\bar r }\right]-2\bar Q+\bar r \,, \qquad \bar r:=\frac{1}{T_0}\int_0^{T_0}r_tdt\,.
\end{equation}
The above result can be also obtained variationally by showing that
the minimizer in
\begin{equation}\label{mqbar}
\blu{\bar I_f} \left(\bar Q\right):=\frac{1}{T_0}\inf_{\left\{(\mu,Q):\frac{1}{T_0}\int_0^{T_0}Q_tdt=\bar Q\right\}}I(\mu,Q)\,,
\end{equation}
is given by $\mu_t=\frac 12$ and $Q_t=r_t\bar Q /\bar r $. We omit the
computations.

\subsection*{Comparison with an effective time homogenous chain}
Always in the case of equal jump rates, \blu{i.e. $r_t:=r_t(0,1)=r_t(1,0)$}, we here obtain an upper bound
for the rate functional \rosso{ $\bar I (\bar \mu, \bar Q)$ (see \eqref{aiuto}) } in terms of the level 2.5 rate
functional of a time homogenous Markov chain with suitable rates \rosso{(in the same spirit of homogenization theory)}.

Let us call $I^{\bar r}$ the LD  rate functional for the
empirical measure and flow of a 2-state Markov chain having time
independent rates equal to $r(0,1)=r(1,0)=\bar r $. According to
\cite{BFG1,BFG2} we have
\begin{equation}\label{davide49}
I^{\bar r}(\bar \mu,\bar Q)=\bar Q\log\left[\frac{ \bar Q^2}{\bar\mu(0)\bar\mu(1)\bar r ^2}\right]-2\bar Q+\bar r\,,
\end{equation}
where, by the divergence free condition, $\bar Q:=\bar Q(0,1)=\bar Q(1,0)$ \blu{(also below, we restrict to divergence-free flows $\bar Q$, otherwise we have $I^{\bar r}(\bar \mu,\bar Q)=\infty$)}. By minimizing  \eqref{davide49} among $\bar \mu$ and comparing with \eqref{ididimarzo} we get that
$$
\inf_{\bar\mu}I^{\bar r}(\bar \mu,\bar Q)=\blu{\bar I_f} (\bar Q)=\inf_{\bar\mu}\bar I(\bar \mu,\bar Q)\,.
$$
In addition  we can  show the inequality   \begin{equation}\label{basket}
\bar I(\bar \mu,\bar Q)\leq I^{\bar r}(\bar \mu,\bar Q)\,,
\end{equation}
which in general is strict.
Inequality \eqref{basket} can be derived  simply by  inserting in \eqref{aiuto}   the special pair $(\mu,Q)$ given by
$$
\mu_t(y)=\bar \mu(y) \,,\qquad Q_t(y,z)=\frac{r_t(y,z) \bar Q(y,z)}
{\bar r }\,.
$$
Considering more general Markov chains one cannot expect inequality
\eqref{basket} to be true.  Indeed, such an inequality would imply
that the rate functionals have the same global minima, which in
general is not valid, see Remark \ref{parto2}.

\section{Preliminary results}\label{sec_preliminare}
In this section we collect some technical results. Since some of them  will be applied also to a tilted   continuous time Markov chain with less regular jump rates, here  we only assume that the jump rates satisfy the periodicity assumption (i.e.  $r(\cdot,\cdot;t)= r(\cdot,\cdot; t+T_0)$ for some  $T_0>0$), assumptions (A1) and  (A2) and that  $r(y,z; \cdot)$ is a measurable,  locally integrable nonnegative function. As mentioned in Section \ref{MR}, the last assumption guarantees that  the associated continuous time Markov chain is well defined \cite{D}.

\begin{definition}\label{defi} Given $\mu \in  \mc M_{+,T_0} (V \times \cS_{T_0})$
 we define $Q^\mu  \in \mc M_+ (E \times \cS_{T_0})$ as $Q^\mu  (y,z,dt ) := \mu(y,dt ) r (y,z;t )$. If $\mu= \mu_t dt $, then
 we set $Q^\mu_t  (y,z ):=\mu_t(y) r (y,z;t )$ (thus implying that $Q^\mu= Q^\mu_t dt $).
\end{definition}

\subsection{Radon-Nikodym derivative}\label{sec_RN}
 Calling $N_t$ the number of jumps of  the trajectory $X$ up to time  $t$, and $\t_1<\t_2 < \cdots <\rosso{\t_{N_t}}$ the jump times, then it holds for $0<t_1<t_2<\cdots < t_n<\rosso{t}  $ \blu{and $x_1, x_2, \dots, x_n\in V$}
 \begin{equation}\label{diderot}
 \begin{split}
 \bbP_{x}  & \left(
 \begin{array}{c}
  N_t=n\,, \t_i \in [t_i, t_i +dt_i)   \text{ for all } i=1,2, \dots, n \,,\\
\blu{ \xi_t=x_i \text{ for all } t\in [t_i,t_{i+1})\,, \forall i =0,1, \dots, n}
\end{array}
   \right)\\ =
 &\exp\Big\{-\sum_{i=0}^n  \int _{t_i} ^{t_{i+1} }  r (x_i;s) ds  \Big\} \prod _{i=0}^{n-1} r (x_i,x_{i+1}; t_{i+1}) dt_1 dt_2\dots dt_n\,,
\end{split}
\end{equation}
 where $x_0:= x$, $t_0:=0$  and $t_{n+1}:= t$. \rosso{We recall that $r(x;s):= \sum _z r(x,z;s)$.}

We consider another Markov chain on $V$ with $T_0$-periodic rates $\bar r(y,z;t)$  (given by nonnegative locally integrable functions)
 and such that
 \[  \bar r( y,z; t)>0 \qquad \Longrightarrow \qquad  r(y,z;t) >0 \,.\] Then its law $\bar \bbP_x  |_{[0,t]}$ on the  space $D([0,t]; V)$  of c\`adl\`ag paths is absolutely continuous
 with respect to $\bbP_x  |_{[0,t]} $ and
 the Radon-Nikodym derivative  on $D([0,t]; V)$   is given by
 \begin{equation}\label{RN}  \frac{ d \bar \bbP_{x}}{d \bbP_{x} }\Big|_{[0,t]}  \bigl( (X_s)_{s \in [0,t]} \bigr) = \exp \left\{
  \int_0^t \bigl[ r(X_s; s)- \bar r  ( X_s; s)\bigr] ds
 \right\} \prod _{\substack{s \in (0,t]:\\ X_{s-} \not = X_s}} \frac{\bar r   (X_{s-} , X_s; s)}{   r (X_{s-} , X_s;s) }\,.
\end{equation}

%
%
%
%

Let us suppose that $ r(y,z;t) =0 $ if and only if $\bar r(y,z;t)=0$. Then we can write
\[ \bar r (y,z;t)= r(y,z;t) e^{F(y,z;t)}\,, \qquad F(y,z;t):= \log \frac{ \bar r (y,z;t) }{r(y,z;t)}  \]
(above we used the convention $\log (0/0)=0$). Note that $F$ is \blu{$T_0$--periodic}.
Since $r(y; \cdot)$ and $r(y,z; \cdot)$ are $T_0$-periodic functions, we can restate \eqref{RN} as follows:
 \begin{equation}\label{RN-new}
 \frac{ d  \bbP^F _{x}}{d \bbP_{x} }\Big|_{[0,nT_0]}   = \exp \Big\{
 n \mu^{(n) } ( r-\bar r) + n  Q^{(n)} ( F) \Big\}\,,\qquad \bbP^F_x:=\bar \bbP_x\,.
 \end{equation}



\subsection{Some identities}\label{rocco}

 Take $Q \in \cM_+ ( E \times \cS_{T_0})$. Denoting by $\cB$ the Borel sets of $\cS_{T_0}$,  for each $y\in V$
 \[
  \cB\ni A \mapsto  \sum_z Q( y,z, A) - \sum _{\rosso{z}} Q(z,y,A)=: \div Q(y, A)\in \bbR
  \]
  is a signed measure on $\cS_{T_0}$. In what follows we denote by $ \div Q( f)$ the integral of $f$ w.r.t. the above measure $\div Q$:
    \begin{equation}
   \div Q( f)= \sum _y \int_0^{T_0} \div Q(y, ds) f(y,s)\,,\qquad f : V \times \cS_{T_0}\to \bbR\,.
  \end{equation}

 \medskip

 \begin{lemma}\label{salmone}
 Let $ f : V \times \cS_{T_0}\to \bbR
$ be $C^1$.   Then
 \begin{equation}\label{fenomeno4}
 \mu^{(n)} (\partial_s f)  - \div Q^{(n)}( f)=\frac{1}{n} \bigl( f( X_{nT_0},0)- f( X_0,0) \bigr)\,.
 \end{equation}
 \end{lemma}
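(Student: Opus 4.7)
The plan is to prove the identity by writing down the telescopic decomposition of $f(X_t,t)$ along the trajectory and then matching the two pieces against the definitions of $\mu^{(n)}$ and $\div Q^{(n)}$.

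First I would fix a realization $X \in D(\bbR_+;V)$ and observe that $t \mapsto f(X_t,t)$ is piecewise $C^1$: it is smooth between successive jump times of $X$ (with time derivative $\partial_s f(X_t,t)$) and has jump $f(X_s,s)-f(X_{s-},s)$ at each $s\in (0,nT_0]$ with $X_{s-}\neq X_s$. Telescoping yields
\begin{equation*}
f(X_{nT_0},nT_0)-f(X_0,0)=\int_0^{nT_0}\partial_s f(X_t,t)\,dt+\!\!\!\sum_{\substack{s\in(0,nT_0]:\\ X_{s-}\neq X_s}}\!\!\![f(X_s,s)-f(X_{s-},s)].
\end{equation*}
By the $T_0$--periodicity of $f$ in the time variable, $f(X_{nT_0},nT_0)=f(X_{nT_0},0)$, so after dividing by $n$ the left-hand side equals the right-hand side of \eqref{fenomeno4}.

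Next I would identify each of the two terms on the right. From \eqref{roma_as} we immediately recognise $\frac{1}{n}\int_0^{nT_0}\partial_s f(X_t,t)\,dt=\mu^{(n)}(\partial_s f)$. For the jump term, I would rewrite $\div Q^{(n)}(f)$ using the definition of divergence and the convention that extends $Q^{(n)}$ by zero off $E$: after swapping the dummy labels in the incoming-edge sum,
\begin{equation*}
\div Q^{(n)}(f)=\sum_{(y,z)\in E}\int_0^{T_0}[f(y,s)-f(z,s)]\,Q^{(n)}(y,z,ds).
\end{equation*}
Plugging in \eqref{juve} (using that $f$ is $T_0$--periodic so $f(y,s)=f(y,s\bmod T_0)$) gives
\begin{equation*}
\div Q^{(n)}(f)=\frac{1}{n}\!\!\!\sum_{\substack{s\in(0,nT_0]:\\ X_{s-}\neq X_s}}\!\!\![f(X_{s-},s)-f(X_s,s)],
\end{equation*}
which is the negative of the jump-sum appearing in the telescoping identity. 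Combining the two computations finishes the proof.

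There is no real obstacle here; this is a bookkeeping exercise. The only points that require care are (i) the sign convention in $\div Q^{(n)}(f)$ and the relabelling $(z,y)\leftrightarrow(y,z)$ in the incoming-edge sum, and (ii) using the periodicity of $f$ to turn $f(X_{nT_0},nT_0)$ into $f(X_{nT_0},0)$ so the boundary term matches the statement. No regularity beyond $C^1$ in $t$ is needed because $X$ has only finitely many jumps in $[0,nT_0]$ a.s.
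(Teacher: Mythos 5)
Your proof is correct and follows essentially the same route as the paper's: the same telescoping of $t\mapsto f(X_t,t)$ into an absolutely continuous part and a jump part, the same use of $T_0$--periodicity to identify the boundary term, and the same relabelling to recognise the jump sum as $-\div Q^{(n)}(f)$. The only (cosmetic) difference is that the paper telescopes period by period over $(kT_0,(k+1)T_0]$ and then averages over $k$, whereas you telescope once over the whole window $[0,nT_0]$.
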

 \begin{proof}
Let $s_1<s_2 <\cdots <s_m$ \rosso{be} the jump times of the path $X$ in the time interval $(kT_0, ( k+1)T_0]$. We set $s_0:= kT_0$  and $s_{m+1}:=( k+1)T_0$.
We can write
\begin{equation*}
\begin{split}
 &  f( X_{(k+1) T_0}, (k+1)T_0) - f( X_{k T_0}, kT_0) =
 f ( X_{s_m}, ( k+1)T_0) - f (X_{s_1 -},kT_0) \\
 & = \sum _{j=0}^{m} [
  f ( X_{s_j}, s_{j+1} ) - f (X_{s_j},s_j)
] +  \sum _{j=1}^{m} [
  f ( X_{s_j}, s_j ) - f (X_{s_j -},s_j)
]
 \\
 & = \sum _{j=0}^{m} \int_{s_j}^{ s_{j+1}  } \partial_s f ( X_s, s) ds
+  \sum _{j=1}^{m} [
  f ( X_{s_j}, s_j ) - f (X_{s_j -},s_j)
] \\
&= \int_{kT_0} ^{(k+1)T_0}   \partial_s f ( X_s, s) ds
 +  \sum _{j=1}^{m} [
  f ( X_{s_j}, s_j ) - f (X_{s_j -},s_j)
]\,.
\end{split}
\end{equation*}
Averaging the above identities among $k=0,\dots, n-1$ and using the  $T_0$-periodicity of $f$  we get
\begin{equation*}
\begin{split}
& \frac{1}{n} \bigl( f( X_{nT_0},0)- f( X_0,0) \bigr)
=
\mu^{(n)}(\partial _s f)  +\sum_{y,z} \int_{[0,T_0] } Q^{(n) } (y,z, ds ) \bigl( f(z,s)-f(y,s) \bigr)\\
& =
\mu^{(n)}(\partial _s f)  - \sum_{y} \sum_z  \int_{[0,T_0] } Q^{(n) } (y,z, ds ) f(y,s)  +\sum_y \sum_z  \int_{[0,T_0] } Q^{(n) } (z,y, ds )f(y,s)\\
& = \mu^{(n)}(\partial _s f)  - \div Q^{(n) }(f)\,.
\end{split}
\end{equation*}
\end{proof}


\subsection{The oscillatory steady state}
We collect in the following proposition some asymptotic  properties of  the oscillatory steady state.  Recall \blu{the definition of $\pi$ given in Section \ref{MR} and} Definition \ref{defi}.
\begin{proposition}\label{lln} The following holds:
\begin{itemize}
\item[(i)]
Fixed $t\in [0,T_0]$, under $\bbP_x$,  the law of $X_{t+nT_0}$ weakly converges to $\pi_t$ as $n$ goes to $\infty$;
\item[(ii)] $\bbP_x$-a.s.  $\mu^{(n)}$ weakly converges to   \blu{$\pi=\pi_t dt $}  in  $\mc M_{+,T_0}( V \times \cS_{T_0} )$. \blu{More generally, given a measurable function $f: V \times \cS_{T_0} \to \bbR$  with $\|f\|_\infty<\infty$, it holds
\begin{equation}\label{mazurka1}
\lim_{n \to \infty } \mu^{(n)} (f) =  \pi(f) \qquad \text{$\bbP_x$-a.s.  and in $L^1(\bbP_x)$\,.}
\end{equation}}
\item[(iii)]
$\bbP_x$-a.s.,
$\bar \mu_T$ weakly converges to $\frac{1}{T_0} \int _0 ^{T_0} \pi_t dt$ in $\cP(V)$;
\item[(iv)]  $\bbP_x$-a.s.  $Q^{(n)}(y,z ; dt)$  \rosso{weakly} converges to   $Q_t^\pi  (y,z)  dt$  in $\mc M_+ (E \times \cS_{T_0})$. \blu{More generally, given a measurable function $g: E \times \cS_{T_0} \to \bbR$  with $\|g\|_\infty<\infty$, it holds
\begin{equation}\label{mazurka2}
\lim_{n \to \infty } Q^{(n)} (g) =  Q^\pi (g)  \qquad \text{$\bbP_x$-a.s.  and in $L^1(\bbP_x)$\,.}
\end{equation}}
\end{itemize}
\end{proposition}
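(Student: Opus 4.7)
The plan is to reduce all four items to an ergodic theorem for the discrete-time Markov chain of path segments
\[ \Xi_k := (\xi_{kT_0+t})_{t \in [0,T_0]}, \qquad k\geq 0, \]
on $D([0,T_0];V)$. Its ``skeleton'' $\tilde\xi_k = \xi_{kT_0}$ is a finite-state homogeneous Markov chain which is irreducible (by (A2)--(A3) together with the graphical construction of Subsection~\ref{grafica}, any path in $(V,E)$ can be realized within a single period with strictly positive probability) and aperiodic (since $\tilde p(x,x)\geq \exp\{-T_0\sup_s r(x;s)\}>0$ by \eqref{25aprile}). The segment chain $(\Xi_k)$ inherits these properties and admits as unique invariant distribution the law $\nu_\infty := \bbP_{\pi_0}|_{[0,T_0]}$ of one period of the NOS.

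For item (i), standard convergence for ergodic finite Markov chains gives $\bbP_x(\tilde\xi_n = y)\to \pi_0(y)$. For arbitrary $t\in[0,T_0)$, conditioning at time $nT_0$ and using the $T_0$-periodicity of the rates together with the $T_0$-stationarity of the NOS yields
\[ \bbP_x(X_{t+nT_0}\in\cdot) = \sum_y \bbP_x(\tilde\xi_n=y)\,\bbP_y(X_t\in\cdot) \xrightarrow{n\to\infty} \sum_y \pi_0(y)\,\bbP_y(X_t\in\cdot) = \pi_t. \]

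For items (ii) and (iv), given bounded measurable $f:V\times\cS_{T_0}\to\bbR$ and $g:E\times\cS_{T_0}\to\bbR$, set
\[ \varphi_f(\Xi):=\int_0^{T_0}f(\Xi_t,t)\,dt, \qquad \varphi_g(\Xi):=\sum_{\substack{t\in(0,T_0]:\\ \Xi_{t-}\neq \Xi_t}} g(\Xi_{t-},\Xi_t,t), \]
so that $\mu^{(n)}(f)=\frac{1}{n}\sum_{k=0}^{n-1}\varphi_f(\Xi_k)$ and $Q^{(n)}(g)=\frac{1}{n}\sum_{k=0}^{n-1}\varphi_g(\Xi_k)$. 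Birkhoff's ergodic theorem for the ergodic chain $(\Xi_k)$ yields $\bbP_x$-a.s.\ (and, combined with dominated convergence, $L^1(\bbP_x)$) convergence to $\bbE_{\nu_\infty}[\varphi_f] = \pi(f)$ and to $\bbE_{\nu_\infty}[\varphi_g] = Q^\pi(g)$ respectively, the latter identity being the L\'evy system / compensator formula for the NOS. Weak convergence of $\mu^{(n)}$ and $Q^{(n)}$ then follows by specializing to a countable dense family of continuous test functions. Finally, item (iii) is obtained by projecting (ii) onto the $V$-marginal along $T=nT_0$, the extension to general $T=nT_0+\rho$ costing only a boundary term of order $\rho/T = O(T^{-1})$.

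The one non-routine point is the $L^1$ step in (iv), since $\varphi_g$ is unbounded: the number $N_{T_0}(\Xi)$ of jumps in one period is unbounded. However, by \eqref{25aprile} the total jump rate $\sum_z r(y,z;t)$ is uniformly bounded, so $N_{T_0}$ is stochastically dominated by a Poisson variable of finite mean and therefore belongs to $L^p(\nu_\infty)$ for every $p\geq 1$. Since $|\varphi_g|\leq \|g\|_\infty N_{T_0}$, the averaged sequence is uniformly integrable, so the ergodic theorem delivers both a.s.\ and $L^1$ convergence, as claimed.
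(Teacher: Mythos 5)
Your proposal is correct and follows essentially the same route as the paper: both reduce items (ii) and (iv) to Birkhoff's ergodic theorem for the stationary chain of one-period path segments $\bbX_k=(\xi_{kT_0+s})_{0\le s\le T_0}$ with invariant law $\bbP_{\pi_0}|_{[0,T_0]}$, and both control the unbounded jump functional by Poisson domination of the number of jumps per period via (A3). The only step you leave implicit is the passage from a.s.\ convergence under the stationary measure $\bbP_{\pi_0}$ to a.s.\ convergence under $\bbP_x$, which the paper obtains from $\bbP_{\pi_0}=\sum_x\pi_0(x)\bbP_x$ with $\pi_0(x)>0$.
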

\begin{proof}
(i) Due to Assumptions (A1) and (A2), the discrete time Markov chain  $(X_{t+nT_0})_{n \geq 0}$
is irreducible. Since $V$ is finite, we get that this discrete time Markov chain has a unique invariant distribution to which it converges (whatever the initial distribution). As a consequence, the invariant distribution must be given by the distribution $\pi_t$ introduced in Section \ref{MR}. This concludes the proof of Item (i).
\rosso{Item (iii) follows directly  from Item (ii). The proof of Items (ii) and  (iv) can be derived from  \cite[\rosso{Thm.} 2.1]{HK} adapted to processes  with c\`adl\`ag paths.  We comment this step.  We associate to the continuous time  Markov chain  $\xi = (\xi_t)_{t\in \bbR_+}$  the random sequence  $\bbX=(\bbX_k)_{k \geq 0 }$   of paths  in $D([0,T_0]; V)$ with $\bbX_k:= (\xi_{k T_0  +s  })_{0 \leq s \leq T_0}$.   By the arguments presented to derive  Theorem 2.1 in \cite{HK} we get that $\bbX$ is a Markov chain, ergodic and stationary  when $\xi_0$ is sampled with distribution $\pi_0$. Hence $\bbP_{\pi_0}|_{[0,T_0]}$ is the marginal distribution of $\bbX$ in the stationary state. Given  \blu{measurable} functions $f: V \times \cS_{T_0} \to \bbR$ and $g: E \times \cS_{T_0}\to \bbR$ \blu{with $\|f\|_\infty, \|g\|_\infty< \infty$}, we can write (see \eqref{roma_as} and \eqref{juve})
\begin{align}
& \mu^{(n)} (f) =\frac{1}{n} \sum _{j=0}^{n-1} F(\bbX_j)\,,\qquad F(\z) := \int_0^{T_0}   f(\z_t,t)dt \\
& Q^{(n)}(g) = \frac{1}{n}  \sum _{j=0}^{n-1} G(\bbX_j)+O(1/n)   \,, \qquad G(\z) := \sum _{\substack{  t\in [0,T_0) :\\ \z_{t-}\not =\z_t } }  g( \z_{t-}, \z_t ,t) \,,
\end{align}
where $\z$ denotes a   generic element of $D([0,T_0]; V)$.}
\blu{We observe that  $F, G$ are integrable  w.r.t. $\bbP_{\pi_0}|_{[0,T_0]}$. This is trivial for $F$ since bounded.
 The integrability of  $G$ follows from the boundedness of $g$ and the fact that the total  number of  jumps in $[0,T_0)$ under $\bbP_{\pi_0}|_{[0,T_0]}$ is stochastically dominated by a suitable Poisson random variable due to Assumption (A3), hence  $G\in L^1 (\bbP_{\pi_0})$.
 From  Birkhoff's ergodic theorem  we derive the $ \mu^{(n)} (f) $ converges to $\bbE_{\pi_0} [ F]$ and $Q^{(n)}(g) $ converges to $\bbE_{\pi_0} [ G]$  both  $\bbP_{\pi_0}$--a.s. and in $L^1 (\bbP_{\pi_0})$. Since $\bbP_{\pi_0}= \sum \pi_0(x) \bbP_x $ and $\pi_0(x)>0$ for any $x$, we derive the convergence also $\bbP_x$--a.s. and in $L^1 (\bbP_x)$ for any $x \in V$.}
\end{proof}

\begin{lemma}\label{conti}
It holds
$
\partial _t \pi_t +\div Q^\pi _t=0$ \blu{weakly}.
\end{lemma}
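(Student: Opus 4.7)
The plan is to derive the weak continuity equation by taking the $n\to\infty$ limit in the identity of Lemma~\ref{salmone} and then invoking the ergodic-type convergence of $(\mu^{(n)},Q^{(n)})$ to $(\pi,Q^\pi)$ established in Proposition~\ref{lln}.

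More precisely, fix a test function $f : V\times \cS_{T_0}\to\bbR$ of class $C^1$. Lemma~\ref{salmone} gives, $\bbP_x$--a.s.,
\begin{equation*}
\mu^{(n)}(\partial_s f)-\div Q^{(n)}(f)=\tfrac{1}{n}\bigl(f(X_{nT_0},0)-f(X_0,0)\bigr).
\end{equation*}
Since $f$ is continuous on the compact space $V\times \cS_{T_0}$, it is bounded, so the right-hand side tends to $0$ as $n\to\infty$.

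The second step is to identify the limits of the two terms on the left. For the first term, $\partial_s f$ is bounded, so Proposition~\ref{lln}(ii) yields
\begin{equation*}
\lim_{n\to\infty}\mu^{(n)}(\partial_s f)=\pi(\partial_s f)=\int_0^{T_0}\sum_y \pi_s(y)\,\partial_s f(y,s)\,ds\qquad \bbP_x\text{-a.s.}
\end{equation*}
For the divergence term, rewrite
\begin{equation*}
\div Q^{(n)}(f)=\sum_{(y,z)\in E}\int_0^{T_0} Q^{(n)}(y,z,ds)\,\bigl[f(z,s)-f(y,s)\bigr]=Q^{(n)}(g),
\end{equation*}
where $g(y,z,s):=f(z,s)-f(y,s)$ is a bounded measurable function on $E\times \cS_{T_0}$. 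By Proposition~\ref{lln}(iv),
\begin{equation*}
\lim_{n\to\infty} Q^{(n)}(g)=Q^\pi(g)=\sum_{(y,z)\in E}\int_0^{T_0} Q^\pi_s(y,z)\bigl[f(z,s)-f(y,s)\bigr]\,ds=\div Q^\pi(f)\qquad \bbP_x\text{-a.s.}
\end{equation*}
Combining these two limits with the vanishing of the boundary term, we obtain
\begin{equation*}
\int_0^{T_0}\sum_y \pi_s(y)\,\partial_s f(y,s)\,ds=\int_0^{T_0}\sum_y \div Q^\pi_s(y)\,f(y,s)\,ds,
\end{equation*}
which, in view of the definition \eqref{zuccotto}, is exactly the weak form of $\partial_t\pi_t+\div Q^\pi_t=0$.

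The only subtle point is applying Proposition~\ref{lln}(iv) to the function $g$: since $g$ is bounded measurable (not necessarily continuous, although here it is continuous in $s$ if $f$ is), one uses the $L^1$/a.s. convergence stated in \eqref{mazurka2} rather than merely weak convergence of measures; this is routine and is the only place one has to be slightly careful.
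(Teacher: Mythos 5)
Your proof is correct and follows exactly the paper's route: the paper's proof of Lemma \ref{conti} is precisely to pass to the limit $n\to\infty$ in the identity of Lemma \ref{salmone} and invoke Proposition \ref{lln}, which is what you carry out (with the details of identifying the two limits written out explicitly). The only remark is that since $f$ is $C^1$, the function $g(y,z,s)=f(z,s)-f(y,s)$ is continuous, so the weak convergence in Proposition \ref{lln}(iv) already suffices and no appeal to the bounded-measurable extension is needed.
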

\begin{proof}
Due to Definition \ref{defi}  we only need to prove that $\rosso{\pi} (\partial_s f)  -  \div Q^\pi( f)=0 $ for any $C^1$ function
\rosso{$ f : V \times \cS_{T_0}\to \bbR
$}. This identity can be obtained by taking the limit $n\to \infty$ in
 Lemma \ref{salmone} and using Proposition \ref{lln}.
 \end{proof}

We conclude this section with an alternative characterization of
$\pi=\pi_t dt $.
\begin{proposition}\label{ananas}
 The only weak solution $\mu\in   \mc M_{+,T_0} (V \times \cS_{T_0}) $, with  $\mu=\mu_t dt $, of the
 equation
 \begin{equation}\label{arancia}
\partial _t \mu_t + \div Q^\mu_t =0
\end{equation}
is given by $\pi$.
\end{proposition}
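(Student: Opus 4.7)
The plan is to upgrade the weak continuity equation to a Carathéodory ODE for $s\mapsto\mu_s$ and then use the uniqueness of the invariant distribution of the discrete-time chain $\tilde\xi$.

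First I would derive pointwise regularity. Taking the test function $f(y,s) = h(s)$ with $h\in C^1(\cS_{T_0})$ in \eqref{zuccotto} gives $\int_0^{T_0}\mu_s(V) h'(s)\,ds = 0$, so $s\mapsto\mu_s(V)$ is weakly constant; combined with $\mu\in\cM_{+,T_0}$ (total mass $T_0$) this forces $\mu_s(V)=1$ for a.e.\ $s$, and in particular $\mu_s(y)\in[0,1]$ for a.e.\ $s$. Next, testing against $f(y,s)=\id_{\{y_0\}}(y)h(s)$ yields that, for each $y_0\in V$, the scalar function $s\mapsto\mu_s(y_0)$ has distributional derivative $-\div Q^\mu_s(y_0)$. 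Because rates are locally integrable and $\mu_s(y)\le 1$, the map $s\mapsto\div Q^\mu_s(y_0)$ belongs to $L^1(\cS_{T_0})$, so $s\mapsto\mu_s(y_0)$ admits an absolutely continuous representative (which we still denote $\mu_s(y_0)$) satisfying the Carathéodory forward equation
\begin{equation*}
\dot\mu_s(y_0) \;=\; -\mu_s(y_0)\,r(y_0;s)+\sum_{z}\mu_s(z)\,r(z,y_0;s) \qquad\text{for a.e.\ }s.
\end{equation*}

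Then I would solve this ODE. Writing $A(s)$ for the generator matrix ($A(s)(y,z)=r(y,z;s)$ for $z\neq y$ and $A(s)(y,y)=-r(y;s)$) and $P(s,t)$ for its fundamental matrix (i.e.\ the forward transition kernel of $\xi$), Carathéodory uniqueness for the linear locally $L^1$ system $\dot\mu=\mu A$ gives $\mu_t=\mu_0 P(0,t)$ for every $t\in[0,T_0]$. The $T_0$-periodicity of the absolutely continuous representative yields $\mu_0=\mu_{T_0}=\mu_0 P(0,T_0)=\mu_0\tilde p$, so $\mu_0$ is an invariant probability measure for the discrete-time chain $\tilde\xi$.

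Finally I would invoke the uniqueness of this invariant distribution: by assumptions (A1)–(A2) the chain $\tilde\xi$ on the finite set $V$ is irreducible, hence $\pi_0$ is its unique invariant probability (as already recalled in Section~\ref{MR}); therefore $\mu_0=\pi_0$ and $\mu_t=\pi_0P(0,t)=\pi_t$ for a.e.\ $t$, i.e.\ $\mu=\pi$ in $\cM_{+,T_0}(V\times\cS_{T_0})$. The only delicate step is the first one: one must promote the weak equation to an a.e.\ identity without invoking the uniform bound (A3) on the rates, which is why I rely on the a.priori bound $\mu_s(y)\le 1$ together with mere local integrability to place $\div Q^\mu$ in $L^1(\cS_{T_0})$; everything else is a standard invariant-measure argument.
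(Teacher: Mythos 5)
Your proof is correct, but it follows a genuinely different route from the paper's. The paper first shows, via Davis's theory of piecewise deterministic Markov processes (the characterization of the extended generator's domain by absolutely continuous functions and the criterion $\mu(Lf)=0$), that any weak solution of \eqref{arancia} is an invariant measure of the space--time process $(W_t,Y_t)$ on $V\times\cS_{T_0}$; it then pushes $\mu$ forward by the transition kernel over one period to get $\mu_s=\mu_s\,p_{s,s+T_0}$ for a.e.\ $s$ and invokes uniqueness of the invariant distribution of the skeleton chain. You instead bypass the PDMP machinery entirely: you upgrade the weak equation to a Carath\'eodory ODE by testing against $h(s)$ and $\id_{\{y_0\}}(y)h(s)$, observe that the a priori bound $\mu_s(y)\le 1$ together with local integrability of the rates places $\div Q^\mu_\cdot(y_0)$ in $L^1(\cS_{T_0})$ (so an absolutely continuous representative exists), solve the linear system by its fundamental matrix, and reduce to $\mu_0=\mu_0\tilde p$. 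Both arguments funnel into the same uniqueness statement for $\tilde\xi$; yours is more elementary and self-contained (no appeal to \cite[Thm.\ (26.14)]{D} and \cite[Prop.\ (34.7)]{D}), at the price of quoting the standard but unproved facts that the fundamental matrix of the $L^1$-Carath\'eodory linear system coincides with the forward transition kernel of $\xi$ (so that $P(0,T_0)=\tilde p$ and $\pi_0P(0,t)=\pi_t$), and it also handles cleanly the normalization $\mu_t(V)=1$, which the paper's proof leaves somewhat implicit when it passes from $\mu=c\pi$ to $\mu=\pi$. Like the paper, you prove only uniqueness; existence (that $\pi$ itself solves \eqref{arancia}) is already supplied by Lemma \ref{conti}, so this omission is harmless.
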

\begin{proof}
We first show that $\mu\in   \mc M_{+,T_0} (V \times \cS_{T_0}) $, with  $\mu=\mu_t dt $, solving \eqref{arancia} is  an invariant measure of
 the piecewise deterministic Markov process \blu{$(W_t,Y_t)_{t\geq 0}$} on $V\times \cS_{T_0}$ \blu{with extended generator \cite{D} given by \eqref{ago}} (we  are making  some  slight abuse of notation, since $(x,s)$ in \eqref{ago} has to be thought of as element of $V\times \cS_{T_0}$ via the canonical projection for times). By \cite[\rosso{Thm.} (26.14)]{D}  the domain of the extended generator is given by the functions $f(x,s)$ which are absolutely continuous in $s$ (we shortly write $f\in \cA\cC$).
Hence,  due to \cite[\rosso{Prop. (34.7)}]{D}, $\mu$ is an invariant measure for the PDMP  if and only in  $\mu ( Lf  )=0$ for any $f\in \cA\cC$. By density, it is enough that
$\mu ( Lf  )=0$ for any $C^1$ function  $f$, which (by   integration by parts)
is equivalent to the fact that $\mu$ is a weak solution of \eqref{arancia}.

\blu{We set  $p_{s,s+t}(x,y):= P( \xi_{t+s}=y|\xi_s=x)$.
Since $\mu$ is an invariant measure for the PDMP, given a $C^1$ function $f$ on $V \times \cS_{T_0}$,  it holds
\begin{equation}\label{legoland}
\begin{split}
\sum _x \int _{\cS_{T_0}}  \mu_s(x) f(x,s) ds & = \sum_x \int _{\cS_{T_0}} \mu_s(x) E \bigl[ f(W_{T_0}, Y_{T_0})| W_0=x, Y_0=s\bigr]ds\\
& =\sum_x \sum_y \int _{\cS_{T_0}} \mu_s(x)  p_{s, s+T_0}(x,y) f( y,s+T_0)ds\\
& = \sum_y \sum _x \int _{\cS_{T_0}} \mu_s(y)  p_{s, s+T_0}(y,x)  f( x,s)ds\,.
\end{split}
\end{equation}
Note that,  in the last identity, we have used the $T_0$--periodicity of $f$.
By the  $T_0$--periodicity of the map $s\mapsto \mu_s$  and by  the arbitrariness of $f$ in \eqref{legoland}, we conclude that
 $\mu_{s+T_0}(x)=\mu_s(x)= \sum_y \mu_s(y)  p_{s, s+T_0}(y,x)$. This is the equation characterizing $\pi$, apart a multiplicative factor. As a consequence we get that $\mu= c \pi$ for some factor $c$. Since both $\mu$ and $\pi$ have total mass $T_0$, we conclude that $\mu=\pi$.
 On the other hand, it is simple to check (by the arguments presented above) that $\mu:=\pi$ solves \eqref{arancia}.
 }
 \end{proof}


\section{Proof of Theorem \ref{teo2}: upper bound \eqref{eq_UB}, convexity and \blu{lower-semicontinuity} of $I$}\label{sec_UB}

We start by showing exponential tightness:
\begin{lemma}\label{teso}
The family $\left	\{ \bb P_x \circ (\mu^{(n)},Q^{(n)})^{-1}\right\}_{n \geq 1}$ of probability measures on $\cM_*$    is exponentially tight.
\end{lemma}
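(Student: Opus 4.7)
The plan is to reduce exponential tightness on the product space $\cM_*$ to controlling a single scalar: the total mass $Q^{(n)}(E\times \cS_{T_0})$, i.e.\ the total number of jumps per period. Indeed, the first factor $\cM_{+,T_0}(V\times \cS_{T_0})$ is already weak-compact (nonnegative measures with fixed total mass $T_0$ on the compact space $V\times \cS_{T_0}$), so $\mu^{(n)}$ contributes no tightness constraint. For the second factor, any set of the form $\cK_M:=\{Q\in \cM_+(E\times \cS_{T_0})\,:\, Q(E\times \cS_{T_0})\leq M\}$ is weakly compact, again because $E\times \cS_{T_0}$ is compact. Hence it suffices to prove that for every $\ell>0$ there is $M_\ell$ with
\[
\varlimsup_{n\to\infty}\frac{1}{n}\log \bbP_x\bigl( Q^{(n)}(E\times \cS_{T_0})> M_\ell\bigr) \;\leq\; -\ell,
\]
and then take as exponentially compact set $\cM_{+,T_0}(V\times \cS_{T_0})\times \cK_{M_\ell}$.

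To obtain the Poisson-type bound above I would invoke the graphical construction of Section \ref{grafica}. Because every jump of $\xi$ occurs at some ring time of one of the Poisson processes $\cT_{y,z}$ of rate $\l(y,z)=\sup_{t\in[0,T_0]}r(y,z;t)$, the total number $\cN_{nT_0}$ of jumps of $\xi$ in $[0,nT_0]$ is stochastically dominated by a Poisson random variable of parameter $nT_0\L$ with $\L:=\sum_{(y,z)\in E}\l(y,z)$, which is finite by Assumption (A3). Since $Q^{(n)}(E\times \cS_{T_0})=\cN_{nT_0}/n$, a standard Chernoff/Cram\'er estimate for the normalized sum of $n$ i.i.d.\ Poisson$(T_0\L)$ variables gives
\[
\bbP_x\bigl(Q^{(n)}(E\times \cS_{T_0})> M\bigr)\;\leq\; e^{-n\,\Phi(M,\,T_0\L)}\qquad \text{for } M>T_0\L,
\]
with $\Phi$ the function defined in \eqref{Phi}. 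Since $\Phi(M,T_0\L)\to +\infty$ as $M\to\infty$, for each $\ell>0$ one can choose $M_\ell$ such that $\Phi(M_\ell,T_0\L)\geq \ell$, which yields the required bound and concludes the proof.

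I do not expect a real obstacle here: the compactness of $\cM_{+,T_0}(V\times \cS_{T_0})$ handles the $\mu$-coordinate for free, and Assumption (A3) upper-bounds the jump rates uniformly, so the Poisson domination via the graphical construction is immediate. The only mild care needed is to verify that the constant $\L$ is indeed finite (this uses \eqref{25aprile}) and to observe that sub-level sets of the total mass are weakly compact on the compact state space $E\times \cS_{T_0}$, both of which are elementary.
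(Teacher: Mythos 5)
Your proposal is correct and follows essentially the same route as the paper: identify sub-level sets of the total mass of $Q^{(n)}$ as compact sets in $\cM_*$, dominate the total number of jumps in $[0,nT_0]$ by a Poisson variable of parameter $n T_0 \sum_{(y,z)\in E}\sup_t r(y,z;t)$ via the graphical construction, and conclude with an exponential Chebyshev/Chernoff bound. The only (immaterial) difference is that you use the full Cram\'er rate $\Phi(M, T_0\L)$, whereas the paper fixes the tilting parameter at $1$ and gets the cruder but sufficient bound $\exp\{-n\ell + \l n T_0(e-1)\}$.
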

\begin{proof}
Given $\ell >0$ we set  $\cK_\ell:=\{ (\mu,Q) \in \cM_* \,:\, Q(1)\leq \ell\}$.  \rosso{Above, $Q(1)$ denotes the averaged value  w.r.t. to the measure $Q$ of the function constantly equal to $1$, equivalently $Q(1)$ is the total mass of the measure $Q$.}
 Then $\cK_\ell$ is a compact subset of $\cM_*$  \cite{Bi}. To prove the exponential tightness  it is enough to show  that there exists $C>0$ such that
\begin{equation}\label{doraemon}
\varlimsup _{n \to \infty} \frac{1}{n}\log  \bbP_x( \, (\mu^{(n)},Q^{(n)}) \not \in \cK_\ell \,) \leq -C \ell
\end{equation}
for large $\ell$.

We prove \eqref{doraemon}. The event $\{(\mu^{(n)},Q^{(n)}) \not \in \cK_\ell \}$ is simply the event that  the measure $Q^{(n)}$ has total mass larger than $\ell$. Due to \rosso{\eqref{juve}}, the total mass of $Q^{(n)}$ equals $1/n$ times the number of jumps in the time interval $[0,nT_0]$. On the other hand, by the graphical construction presented in Section
 \ref{grafica} the number of jumps in the time interval $[0,nT_0]$ is stochastically dominated by a Poisson variable $Z$ of parameter $\l n T_0$ where $ \l= \sum _{(y,z)} \sup _{ t\in [0,T_0]} r(y,z;t) $. Since $E[ e^{\g Z}]= \exp\{ \l n T_0  (e^{\g }-1)\}   $, by applying Chebyshev's inequality  we get
 \begin{equation}
 \begin{split}
  \bbP_x( \, (\mu^{(n)},Q^{(n)}) \not \in \cK_\ell \,) &  =  \bbP_x( \, Q^{(n)}(1)> \ell \,) \leq P( Z >n  \ell) \\& \leq
e^{-n \ell}E[ e^Z]= \exp \{ -n\ell + \l n T_0 (e-1) \}\,.
\end{split}
\end{equation}
The above bound trivially implies \eqref{doraemon}.
\end{proof}



\rosso{Recall that $r (y;t)= \sum_z r(y,z;t) $. Given a continuous function  $F: E \times \cS_{T_0} \to \bbR$, we set
\[
 r^F (y,z;t)= r(y,z;t) e^{F(y,z;t)} \qquad \text{ and } \qquad  r^F (y;t)= \sum_z r^F(y,z;t)\,.
 \]
Moreover, we consider
$\phi: V \times \cS_{T_0} \to \bbR $  of class  $C^1$ and we define the mappings  $\hat{I}_{\phi,  F}:\cM_* \to \bbR_+$ and $ I_{\phi,  F} :\cM_*\to [0,+\infty] $ as follows:
}
 \begin{align}
& \hat{I}_{\phi,  F}(\mu, Q):= - \mu ( \partial _t \phi) + \div Q (\phi)+    Q(F) -\mu( r^F-r)\,, \label{africa1}\\
&   I _{\phi,  F}(\mu, Q):=
     \begin{cases}
       \hat{I}_{\phi,  F}(\mu, Q)  & \text{ if } \mu = \mu_t  dt\,, \; \mu_t(V)=1 \text{ a.s. } \\
  +\infty & \text{ otherwise}\,.
 \end{cases} \label{africa2}
 \end{align}
 \begin{lemma}\label{fuji}
 The function $I_{\phi,F}$ is convex and lower semicontinuous.
 \end{lemma}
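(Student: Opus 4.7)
The plan rests on two observations: $\hat{I}_{\phi,F}$ is an \emph{affine} functional of $(\mu,Q)$ on $\cM_*$, and the constraint appearing in the definition \eqref{africa2} of $I_{\phi,F}$ cuts out a weakly closed, convex subset of $\cM_{+,T_0}(V\times\cS_{T_0})$. Denote by $\cH\subset\cM_{+,T_0}(V\times\cS_{T_0})$ the set of $\mu$ of the form $\mu_t\,dt$ with $\mu_t(V)=1$ for a.e.\ $t$, equivalently the set of $\mu$ whose time-marginal $A\mapsto\mu(V\times A)$ equals the Lebesgue measure on $\cS_{T_0}$. The two required properties will follow from three facts: (i) $\cH$ is affine and weakly closed; (ii) $\hat{I}_{\phi,F}$ is affine on $\cM_*$; (iii) $\hat{I}_{\phi,F}$ is weakly continuous at every $(\mu,Q)$ with $\mu\in\cH$.

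Each of the four terms in \eqref{africa1} is a linear integral against $\mu$ or $Q$ --- for the second, note that $\div Q(\phi)=\sum_{(y,z)\in E}\int[\phi(y,t)-\phi(z,t)]\,dQ(y,z,t)$ --- which proves (ii). Fact (i) holds because the time-marginal map is weakly continuous, exhibiting $\cH$ as the preimage of the single point given by Lebesgue measure. Convexity of $I_{\phi,F}$ then follows at once: for $(\mu^i,Q^i)\in\cM_*$, $i=1,2$, and $\alpha\in(0,1)$, if $\mu^1,\mu^2\in\cH$ the convex combination remains in $\cH$ and affinity of $\hat{I}_{\phi,F}$ gives equality in the convexity inequality; otherwise the right-hand side equals $+\infty$ and the inequality is trivial.

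For lower semicontinuity, suppose $(\mu_n,Q_n)\to(\mu,Q)$ in $\cM_*$. If $\mu\notin\cH$, the weak closedness of $\cH$ forces $\mu_n\notin\cH$ eventually, hence $I_{\phi,F}(\mu_n,Q_n)=+\infty$ eventually. If $\mu\in\cH$, extracting a subsequence realising $\liminf_n I_{\phi,F}(\mu_n,Q_n)$: a finite liminf requires $\mu_{n_k}\in\cH$ for large $k$, so $I_{\phi,F}(\mu_{n_k},Q_{n_k})=\hat{I}_{\phi,F}(\mu_{n_k},Q_{n_k})$, and continuity (iii) produces the required inequality.

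It remains to establish (iii), which is the technical heart of the lemma. Three of the four terms in \eqref{africa1} integrate $\mu$ or $Q$ against the continuous functions $\partial_t\phi$ (using $\phi\in C^1$), $\phi(y,t)-\phi(z,t)$, and $F$, so they define weakly continuous functionals on all of $\cM_*$. The delicate term is $\mu\mapsto\mu(r^F-r)$: by Assumption~(A3) and the continuity of $F$ the integrand $r^F-r$ is bounded and measurable on $V\times\cS_{T_0}$, but it may fail to be continuous in $t$. Its discontinuity set is contained in $V\times\cD$ with $\cD$ the Lebesgue-negligible set from Assumption~(A4). Since any $\mu\in\cH$ has Lebesgue time-marginal, $\mu(V\times\cD)=0$, and the portmanteau theorem yields $\mu_n(r^F-r)\to\mu(r^F-r)$ for any weakly convergent sequence $\mu_n\to\mu$. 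This is precisely where Assumption~(A4) enters.
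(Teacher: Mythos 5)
Your proposal is correct and follows essentially the same route as the paper: convexity from the affinity of $\hat I_{\phi,F}$ together with the convexity and weak closedness of the constraint set, and lower semicontinuity from continuity of $\hat I_{\phi,F}$ at points of that set, where the only delicate term $\mu\mapsto\mu(r^F-r)$ is handled via the Portmanteau theorem using that $\mu$ has Lebesgue time-marginal and $\cD$ is Lebesgue-null (Assumption (A4)). Your identification of the constraint set as the preimage of Lebesgue measure under the time-marginal map is a clean way to make explicit the closedness that the paper leaves as "simple to check".
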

 \begin{proof}
 Let us call $\cA$  the set of  pairs $(\mu,Q)\in \cM_*$ such that  $\mu = \mu_t  dt$, $\mu_t(V)=1$ \blu{for almost all $t\in \cS_{T_0}$}. It is simple to check that $\cA$ is convex and closed in $\cM_*$. Since $\cA$ is convex and $\hat{I}_{\phi,  F}$ is  convex, it is simple to derive  that $I _{\phi,  F}(\mu, Q)$ is convex.
%

 Let us now prove that   $I _{\phi,  F}$ is continuous on $\cA$. To this aim, given   $(\nu^{(k)}, Q^{(k)})  \to (\nu,Q)$  in $\cA$, we need to show that   $\hat{I}_{\phi,  F}( \nu^{(k)},Q^{(k)})\to\hat{I}_{\phi,  F}( \nu,Q)$.
  Due to the definition of weak convergence of measures and since $\partial_t\phi$, $\phi$ and $F$ are continuous,   the only non trivial step is to show that $\nu^{(k)} (h)\to \nu  (h)$ where $h:=r^F-r$. Since $h(y;t) = \sum _z r(y,z ;t) [ e^{ F(y,z;t)}-1]$, and $F$ is \blu{continuous} in time, for each $y $ the function $h(y; \cdot)$ is continuous on $ \cS_{T_0}\setminus \cD$ (recall Assumption (A4)). On the other hand, since $ \nu =\nu_t dt$, we have $ \sum _y \nu (  y, \cD)=0$. As a \rosso{ byproduct } of the last observation and  the Portmanteau theorem as stated in \cite[Thm. 12.6]{MP}, we  get that  $\nu^{(k)} (h)\to \nu  (h)$.
This concludes the proof that  $I _{\phi,  F}$ is continuous on the  set $\cA$.
    Since  $I _{\phi,  F}$ is continuous on the closed set $\cA$ and it equals $+\infty$ on $\cM_*\setminus \cA$, we conclude that   $I _{\phi,  F}$  is lower semicontinuous.
 \end{proof}
%
%
%
\bigskip

Let us define
\begin{equation}
M_n ^F:=  \exp \Big\{-
 n \mu^{(n) } ( r^F- r) + n  Q^{(n)} ( F) \Big\}\,.
 \end{equation}
 We recall \rosso{ that by \eqref{RN-new}}
\[
 \frac{ d  \bbP^F_{x}}{d \bbP_{x} }\Big|_{[0,nT_0]}   = M_n^F
\]
where $ \bbP^F_{x}$ is the law of the new Markov chain with jump rates $r^F(y,z; t)$.

 Due to \eqref{fenomeno4} we can write
 \begin{equation}\label{caramello}
 -n I_{\phi, F}( \mu^{(n)}, Q^{(n)} ) =
  \phi( X_{nT_0},0)- \phi( X_0,0)- \log  M^F_n\,.
 \end{equation}
 In the above identity we have used also that
 $\mu^{(n)}(x,dt )= \mu^{(n)} _t (x) dt $ where $0\leq \mu^{(n)} _t (x)  \leq 1$ (cf. \eqref{poretti}), thus implying that $ I_{\phi, F}( \mu^{(n)}, Q^{(n)} )
=\hat  I_{\phi, F}( \mu^{(n)}, Q^{(n)} ) $.

\begin{lemma}\label{gatto}
Fix $x \in V$. For each $\phi, F$ as above and each measurable $\cB \subset \cM_*$ it holds
\begin{equation}
\varlimsup_{n \to \infty} \frac{1}{n} \log  \bbP_x \left(\,  (\mu^{(n)}, Q^{(n)}) \in \cB \right) \leq - \inf _{(\mu, Q) \in \cB}  I_{\phi, F} (\mu, Q)\,.
\end{equation}
\end{lemma}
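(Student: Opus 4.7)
The plan is to run the standard exponential change-of-measure argument built around the tilted Markov chain with rates $r^F$, using the identity \eqref{caramello} which packages the Radon--Nikodym derivative in terms of $I_{\phi,F}$.

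First I would rewrite the probability of interest by a change of measure. Since $\bbP_x|_{[0,nT_0]}$ and $\bbP^F_x|_{[0,nT_0]}$ are mutually absolutely continuous (as $r$ and $r^F$ have the same zero set) with $d\bbP^F_x/d\bbP_x = M_n^F$, for any measurable $\cB\subset \cM_*$
\begin{equation*}
\bbP_x\bigl((\mu^{(n)},Q^{(n)})\in\cB\bigr) = \bbE^F_x\Bigl[\,(M_n^F)^{-1}\,\mathds{1}_{\{(\mu^{(n)},Q^{(n)})\in\cB\}}\Bigr].
\end{equation*}
By construction $\mu^{(n)}$ under $\bbP^F_x$ is of the form $\mu_t^{(n)}dt$ with $\mu_t^{(n)}(V)=1$ for every $t$, so $I_{\phi,F}(\mu^{(n)},Q^{(n)})=\hat I_{\phi,F}(\mu^{(n)},Q^{(n)})$ a.s.\ under $\bbP^F_x$. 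Hence the key identity \eqref{caramello} gives
\begin{equation*}
(M_n^F)^{-1} = \exp\bigl\{\phi(X_0,0)-\phi(X_{nT_0},0)\bigr\}\, \exp\bigl\{-n\,I_{\phi,F}(\mu^{(n)},Q^{(n)})\bigr\}.
\end{equation*}

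The second step is to absorb the boundary term using the fact that $\phi$ is continuous on the compact set $V\times \cS_{T_0}$, hence bounded. Setting $C_\phi := 2\|\phi\|_\infty$, we get the deterministic bound $(M_n^F)^{-1}\leq e^{C_\phi}\exp\{-n\,I_{\phi,F}(\mu^{(n)},Q^{(n)})\}$. Plugging this into the change-of-measure formula and bounding $I_{\phi,F}$ by its infimum on $\cB$ yields
\begin{equation*}
\bbP_x\bigl((\mu^{(n)},Q^{(n)})\in\cB\bigr) \leq e^{C_\phi}\,\exp\Bigl\{-n\inf_{(\mu,Q)\in\cB}I_{\phi,F}(\mu,Q)\Bigr\}\,\bbP^F_x\bigl((\mu^{(n)},Q^{(n)})\in\cB\bigr).
\end{equation*}
Since the last probability is at most $1$, taking $\tfrac 1n\log$ and $\varlimsup_{n\to\infty}$ concludes the proof, as the boundary prefactor $e^{C_\phi}$ disappears in the limit.

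There is essentially no obstacle beyond checking that everything is well defined: one must confirm that the tilted Markov chain $\bbP^F_x$ is well posed, which follows because $r^F(y,z;t)=r(y,z;t)e^{F(y,z;t)}$ inherits the periodicity and local integrability needed for the construction in \cite{D} (here the continuity and boundedness of $F$ on $V\times\cS_{T_0}$ make this immediate), and that \eqref{caramello} applies pointwise along paths. The argument above is essentially a deterministic Chebyshev estimate on the Radon--Nikodym derivative; no exponential tightness, lower semicontinuity or variational minimization over $(\phi,F)$ is required at this stage, since those will be used later when combining this lemma over all test pairs $(\phi,F)$ to identify the actual rate functional $I$.
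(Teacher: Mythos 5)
Your argument is correct and is essentially the paper's proof: both hinge on the identity \eqref{caramello} relating $M_n^F$ to $I_{\phi,F}(\mu^{(n)},Q^{(n)})$, bound the boundary term by $e^{2\|\phi\|_\infty}$, and pull out $\sup_{\cB}e^{-nI_{\phi,F}}$; whether one then uses $\bbE_x(M_n^F)=1$ (the paper) or $\bbP^F_x(\cB)\le 1$ after changing measure (your version) is an immaterial reformulation of the same estimate.
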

\begin{proof}
Due to \blu{\eqref{caramello}} we can write
\begin{equation*}
\begin{split}
& \bbP_x  \left(\,  (\mu^{(n)}, Q^{(n)}) \in \cB \right) \\
& = \bbE_x \bigl (\exp \bigl\{ - n I_{\phi, F}( \mu^{(n)}, Q^{(n)} )
- [\phi( X_{nT_0},0)- \phi( X_0,0)]\bigr\} M_n^F  \mathds{1} _{\cB} ( \mu^{(n)}, Q^{(n)} )   \bigr)\\
&\leq \bigl[ \sup_{(\mu, Q)\in \cB} e^{- n I_{\phi, F} (\mu,Q)}\bigr] e^{2 \| \phi\|_\infty}\bbE_x (M_n^F)= \bigl[ \sup_{(\mu, Q)\in \cB} e^{- n I_{\phi, F} (\mu,Q)}\bigr] e^{2 \| \phi\|_\infty}\,,
\end{split}
\end{equation*}
thus implying the thesis.
\end{proof}

Due to the exponential tightness \blu{(see Lemma \ref{teso})}, it is enough to prove the upper bound \eqref{eq_UB} for compact subsets $\cK\subset \cM_*$ instead of generic closed subsets $\cC\subset \cM_*$.   Due to Lemma \ref{gatto}, \rosso{for any open subset $\cO\subset \cM_*$} we have
\[
\varlimsup_{n \to \infty} \frac{1}{n} \log \bbP_x \left(\,  (\mu^{(n)}, Q^{(n)}) \in \rosso{\cO} \right) \leq- \sup_{\phi,F}  \inf _{(\mu, Q) \in \rosso{\cO}}  I_{\phi, F} (\mu, Q)\,.\]
As a byproduct of the above bound and the minmax lemma (cf. \cite[Lemma 3.3, App. 2]{KL}), we conclude that
\[
\varlimsup_{n \to \infty} \frac{1}{n} \log \bbP_x \left(\,  (\mu^{(n)}, Q^{(n)}) \in \cK \right) \leq-   \inf _{(\mu, Q) \in \cK}\sup_{\phi,F}  I_{\phi, F} (\mu, Q)\,.\]
Hence, to conclude the proof of the upper bound \eqref{eq_UB} it is enough to apply the following lemma:
\begin{lemma}\label{mauro}
For each $(\mu,Q)\in \cM_*$ it holds
\begin{equation}\label{nocciolina}
I(\mu,Q)= \sup_{\phi,F}  I_{\phi, F} (\mu, Q)\,,
\end{equation}
where the supremum is taken among all  $C^1$ functions  $\phi: V \times \cS_{T_0} \to \bbR $  and  continuous functions    $F: E \times \cS_{T_0} \to \bbR$.
\end{lemma}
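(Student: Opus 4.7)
The plan is to prove the identity \eqref{nocciolina} by establishing both inequalities, exploiting the variational representation $\Phi(q,p)=\sup_{s\in\bbR}(qs-p(e^s-1))$ recalled right after \eqref{Phi}, together with a case analysis on whether $(\mu,Q)\in\L$.

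For the inequality $\sup_{\phi,F}I_{\phi,F}(\mu,Q)\le I(\mu,Q)$ it is enough to treat $(\mu,Q)\in\L$, since otherwise the right-hand side is $+\infty$. For such a pair, condition (iii) of Definition~\ref{waffel} (the weak continuity equation), combined with $\mu=\mu_tdt$ and $\mu_t(V)=1$ a.s., yields $-\mu(\partial_t\phi)+\div Q(\phi)=0$ for every $C^1$ function $\phi$. Writing also $Q=Q_tdt$, we therefore get
\[
I_{\phi,F}(\mu,Q)=\int_0^{T_0}\!\!\sum_{(y,z)\in E}\Big[Q_t(y,z)F(y,z;t)-\mu_t(y)r(y,z;t)\bigl(e^{F(y,z;t)}-1\bigr)\Big]dt,
\]
which by the variational formula for $\Phi$ is bounded pointwise in $(y,z,t)$ by $\Phi(Q_t(y,z),\mu_t(y)r(y,z;t))$, so that $I_{\phi,F}(\mu,Q)\le\int_0^{T_0}I_t(\mu_t,Q_t)dt=I(\mu,Q)$.

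For the reverse inequality in the case $(\mu,Q)\in\L$, I approximate the pointwise optimizer $F^\ast(y,z;t)=\log\bigl(Q_t(y,z)/[\mu_t(y)r(y,z;t)]\bigr)$. By Assumption (A3) we have $\mu_t(y)r(y,z;t)=0$ exactly where $\mu_t(y)=0$, and condition (iv) of Definition~\ref{waffel} forces $Q_t(y,z)=0$ there, so no $0/0$ issue arises; where $Q_t(y,z)=0$ but $\mu_t(y)r(y,z;t)>0$ the value $F^\ast=-\infty$ is compatible with the finite value $\Phi(0,p)=p$. I truncate $F^\ast_K=(F^\ast\vee -K)\wedge K$, approximate it by continuous functions $F_n$ via Lusin's theorem (uniformly bounded by $K$), and use bounded convergence for $Q(F_n)$ and $\mu(r^{F_n}-r)$ (valid because $\mu,Q$ are finite measures and the integrands are uniformly bounded); then letting $K\to\infty$ and invoking monotone convergence for the pointwise sup recovers $\int_0^{T_0}I_t(\mu_t,Q_t)dt$.

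For the case $(\mu,Q)\notin\L$ the aim is to show $\sup_{\phi,F}I_{\phi,F}(\mu,Q)=+\infty$, matching $I(\mu,Q)$. I examine which clause of Definition~\ref{waffel} fails: (a) if $\mu$ is not of the form $\mu_tdt$ with $\mu_t(V)=1$ a.s., then $I_{\phi,F}\equiv+\infty$ by \eqref{africa2}; (b) if (a) holds but the continuity equation is violated, pick $\phi_0\in C^1$ with $-\mu(\partial_t\phi_0)+\div Q(\phi_0)=c\ne 0$ and take $\phi=\lambda\phi_0$, $F=0$, so that $I_{\phi,F}=\lambda c\to+\infty$ as $|\lambda|\to\infty$ with appropriate sign; (c) if $Q$ is not absolutely continuous in $t$, there exist $(y_0,z_0)\in E$ and a Lebesgue-null Borel set $A\subset\cS_{T_0}$ with $Q(y_0,z_0,A)>0$; by outer regularity, one finds continuous $g_n\colon\cS_{T_0}\to[0,1]$ with $\int g_ndQ(y_0,z_0,\cdot)\to Q(y_0,z_0,A)>0$ while $\int g_n\mu_t(y_0)r(y_0,z_0;t)dt\to 0$, and the choice $F=ag_n\mathds{1}_{(y_0,z_0)}$ with a diagonal $n=n(a)$ sends $I_{\phi,F}\to+\infty$ as $a\to\infty$; (d) if clause (iv) fails, the same argument with $g_n$ replaced by a continuous approximation of the indicator of the offending time-set makes the integrand $aQ_t(y_0,z_0)g_n(t)-\mu_t(y_0)r(y_0,z_0;t)(e^{ag_n(t)}-1)$ diverge. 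The main obstacle is Case~(c) together with the approximation step in the $(\mu,Q)\in\L$ lower bound: both require handling unbounded or singular optimizers through careful approximation by admissible continuous test functions.
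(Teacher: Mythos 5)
Your proposal is correct and follows essentially the same route as the paper: the upper bound via the variational formula $\Phi(q,p)=\sup_s\{qs-p(e^s-1)\}$, a case analysis on which clause of Definition~\ref{waffel} fails (with Urysohn/regularity test functions and a diagonal choice of the amplitude for the singular and clause-(iv) cases), and, for $(\mu,Q)\in\L$, truncation of the pointwise optimizer $\log\bigl(Q_t/(\mu_t r)\bigr)$ followed by approximation with continuous functions. The only cosmetic differences are that the paper approximates the truncated optimizer by mollification with dominated convergence rather than by Lusin's theorem, and organizes the limit through an explicit decomposition of $\cS_{T_0}$ into the sets where $Q_t$ vanishes, where $\mu_t$ vanishes, and where both are bounded away from $0$ and $\infty$, instead of your monotone-convergence argument in the truncation level $K$.
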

\begin{remark}\label{I_convessa}
Note that, by the convexity \blu{and the lower-semicontinuity} of $I_{\phi, F} $, the above lemma implies the convexity \blu{and the lower-semicontinuity} of $I$.
\end{remark}

\begin{proof}
In what follows we write $\ell(\cdot)$ for the Lebesgue measure on $\cS_{T_0}$.

\smallskip

$\bullet$ \emph{Case $(\mu,Q)\not \in \L$}.
We claim  that \eqref{nocciolina} reduces to $+\infty=+\infty$ if $(\mu,Q)\not \in \L$.

\smallskip

From the definition of $ I(\mu,Q)$ and $ I_{\phi, F} (\mu, Q)$  \rosso{ (see   \eqref{pakistano} and \eqref{africa2}) }  one trivially gets that both sides of \eqref{nocciolina} are $+\infty$ if  $(\mu ,Q) \not \in \cA$, where $\cA$ is defined
as in the proof of Lemma \ref{fuji}. It \rosso{is} also trivial to verify that both sides of \eqref{nocciolina} are $+\infty$ if, for  some $C^1$ function $\rosso{\phi} :V \times \cS_{T_0}\to \bbR$, it holds $- \mu ( \partial _t \phi) + \div Q (\phi)\not =0$.
Hence, in what follows we  restrict to the case  $\mu = \mu_t  dt$, $\mu_t(V)=1$ \blu{for almost all $t\in \cS_{T_0}$}, and $ \partial_t \mu + \div Q=0$ (in the weak sense). Since in this case   $I_{\phi, F}(\mu,Q)$ does not depend on $\phi$,  we write simply  $I_{F}(\mu,Q)$.

\smallskip

Suppose now that $Q$ is not of the form $Q_t dt $. Hence there exists  a subset $B$ of $\cS_{T_0}$ with zero Lebesgue measure
such that $Q(y_0,z_0,B)>0$ for some $(y_0,z_0)\in E$.    Since both $\ell(\cdot) $ and $Q(y_0,z_0, \cdot)$   are measures of finite mass, they are  regular. Hence,  by \cite[Thm. 1.1]{Bi}, for any $\e>0$ there exist a closed set $D_\e$ and an open set $G_\e$ such that $D_\e\subset B\subset G_\e$,
 $Q(y_0,z_0, G_\e\setminus D_\e)\leq \e$ and $\ell ( G_\e \setminus D_\e) \leq \e$.   In what follows we take $\e<Q(y_0,z_0, B)/2$, thus implying that $Q(y_0,z_0, D_\e)\geq Q(y_0,z_0, B)/2$. On the other hand, since  $\ell (B)=0$, we get that $\ell(G_\e)\leq \e$.
 By \rosso{Urysohn's lemma} we can find a continuous function $\varphi_\e : \cS_{T_0}\to [0,1]$ such that $\varphi_\e \equiv 1 $ on $D_\e$ and $\varphi_\e\equiv 0$ on $G_\e^c$. We then introduce the continuous test function $F_\e(y,z,t)=\g(\e) \d_{y,y_0}\d _{z,z_0} \varphi _\e(t)$ where the positive parameter $\g(\e)$ will be fixed at the end.
 Then we have
\begin{equation}\label{cento}
\begin{split}
I_{F_\e}(\mu,Q) & = \sum_{(y,z) } \int _0^{T_0}  Q (y,z,dt) F_\e(y,z,t)  -\sum _y  \int_0^{T_0}   \mu_t( y)\bigl( r^{F_\e}(y,t)-r(y,t)\bigr)dt\\
&=\int _0^{T_0}  Q (y_0,z_0 ,dt) F_\e(y_0,z_0,t) - \int_0^{T_0}  \mu_t(\blu{y_0})r (y_0,z_0,t)\bigl( e^{ F_\e(y_0,z_0,t)}-1 \bigr) dt\\
&\geq \g(\e) Q (y_0,z_0 ,D_\e) -  e^{\g(\e)}   \int_0^{T_0}  \mu_t( \blu{y_0})r (y_0,z_0,t) \mathds{1} (t\in G_\e)dt \\
& \geq  \g(\e) Q (y_0,z_0 ,D_\e) -  e^{\g(\e)}    \ell (G_\e) \max_{y,z,t} r(y,z,t) \\
&\geq  \g(\e) Q (y_0,z_0 ,B)/2-  e^{\g(\e)}  \e  \max_{y,z,t} r(y,z,t)   \,.
\end{split}
\end{equation}
Taking $\g(\e):= \log (1/\e)$, we get that $\lim _{\e \downarrow  0} I_{F_\e}(\mu,Q)=+\infty$. Hence,
it holds  $\sup _F I_F(\mu,Q)=+\infty$, while trivially $I(\mu,Q)=+\infty$ since $(\mu,Q)\not \in \L$.

\smallskip
We now focus on property (iv) in Definition \ref{waffel} of $\L$. Let us suppose that there \rosso{exist} $B \subset \cS_{T_0}$ and an edge $(y_0, z_0)$ such that $\ell (B)>0$, $\mu_t (y_0)=0$ for   all $t \in B$ and $Q_t(y_0,z_0)>0$ for   all $t\in B$. We need to prove  that $\sup _F I_F(\mu,Q)=\infty$. As above for any $\e>0$ we fix  a closed set $D_\e$ and an open set $G_\e$ such that $D_\e\subset B\subset G_\e$ and $\ell ( G_\e \setminus D_\e) \leq \e$. Without loss \rosso{of generality}  we take $D_{\e} \subset D_{\e'}$ if $\e> \e'$.
  Since $\ell(B)>0$ we have $\ell ( D_{\e_0} )\geq \ell(B)/2>0$ for  $ \e_0:= \ell(B)/2$. In particular,
\blu{$\int _{D_{\e}} Q_t (y_0,z_0) dt  \geq \int _{D_{\e_0}} Q_t (y_0,z_0) dt  >0$ for any $\e<\e_0$}.
 Hence,
similarly to \eqref{cento},  we get
\[ I_{F_\e}(\mu,Q) \geq  \g(\e)
 \int _{D_{\e_0}} Q_t (y_0,z_0) dt  -   e^{\g(\e)}   \blu{ \ell (G_\e\setminus B)} \max_{y,z,t} r(y,z,t)\,.
\]
Using that $   \ell (G_\e\setminus D_\e)\leq \e$ and taking $\g(\e) := \log (1/\e)$, we conclude that $\lim _{\e \downarrow  0} I_{F_\e}(\mu,Q)=+\infty$, thus proving that $\sup _F I_F(\mu,Q)=\infty$.

\medskip

This concludes the proof of our initial claim.

\medskip

$\bullet$ \emph{Case $(\mu,Q) \in \L$}.  We now assume that $(\mu,Q)\in \L$. Since $\partial _t \phi + \div Q=0$, we have $I_{\phi,F}(\mu,Q)=I_{0,F}(\mu,Q)=: I_F(\mu,Q)$. Hence,  we only need to show that
\begin{equation}\label{uff}
I(\mu,Q)= \sup _{F} I_F(\mu,Q)\,,
\end{equation}
where the supremum is taken among  the   continuous  functions    $F: E \times \cS_{T_0} \to \bbR$ and
\begin{equation}\label{picasso}
\begin{split} I_F(\mu,Q) & =  \sum_{(y,z) } \int _0^{T_0}  Q_t (y,z) F(y,z,t) dt -\sum _y  \int_0^{T_0}   \mu_t( y)\bigl( r^F(y;t)-r(y;t)\bigr)dt\\
&= \sum _{(y,z)} \int _0 ^{T_0} dt \Big[  Q_t (y,z) F(y,z,t)- \mu_t(y) r(y,z;t) ( e ^{F(y,z,t)}-1) \Big]\,.
\end{split}
\end{equation}
Since (cf. \eqref{Phi})  $ \Phi (q,p) =\sup_{v\in \bbR} \{ q v- p (e^v-1) \}$ for any $(q,p) \in \bbR_+\times \bbR_+$,  we can bound from above the integrand in the r.h.s. of \eqref{picasso} by $\Phi\bigl( Q_t(y,z), \mu_t(y) r(y,z;t) \bigr)$, thus implying that
\begin{equation}
I_F(\mu,Q)\leq \rosso{\sum_{(y,z)}}\int_0 ^{T_0}  \Phi \big( Q_t(y,z), \mu_t (y) r(y,z;t)\big)  dt = I(\mu,Q)\,.
\end{equation}
It remains to prove that $I(\mu,Q) \leq \sup _F I_F(\mu,Q)$, $F$ varying among the continuous functions.
Since $(\mu,Q) \in \L$  we have
\[I(\mu, Q) = \sum_{(y,z) \in E}
\int_0^{T_0}    \Phi \big( Q_t(y,z), \mu_t (y) r(y,z;t)\big)dt \,.\]
\rosso{Given} $(y,z)\in E$ and given $\e>0$ we define
\begin{align*}
  A(y,z) & :=\{ t\in \cS_{T_0}\,:\,  Q_t (y,z)=0   \}   \,,\\
B(y,z)  & :=\{ t\in \cS_{T_0}\,: \, \mu_t(y)=0 \text{ and } Q_t (y,z)>0 \}\,,\\
 C(y,z) & := \cS_{T_0} \setminus \bigl( A(y,z) \cup B(y,z) \bigr)   =\{ t\in \cS_{T_0}\,:\,  Q_t (y,z)>0 \text{ and } \mu_t(y)>0  \}   \,,\\
   C_\e(y,z) & :=\{ t\in \cS_{T_0}\,:\,  \e\leq Q_t (y,z) \leq \frac{1}{\e}  \text{ and } \e\leq \mu_t(y)\leq \frac{1}{\e}   \}\,.
\end{align*}
Since $(\mu,Q)\in \L$, we have $\ell \bigl( B(y,z)\bigr)=0$. In particular, by definition of $\Phi$ (cf. \eqref{Phi}),
\begin{equation}\label{pietro1}
\begin{split}
 \int_0^{T_0} & \Phi \big( Q_t(y,z), \mu_t (y) r(y,z;t)\big)dt\\
 &   =
  \int _{A(y,z)} \mu_t(y) r(y,z; t) dt + \int_{C(y,z)}
  \Phi \big( Q_t(y,z), \mu_t (y) r(y,z;t)\big)dt\,.
  \end{split}
\end{equation}
Since  $  \Phi \big( Q_t(y,z), \mu_t (y) r(y,z;t)\big) \in \bbR_+$ on $C(y,z)$ we have
\begin{multline}\label{pietro2}
\int_{C(y,z)}
  \Phi \big( Q_t(y,z), \mu_t (y) r(y,z;t)\big)dt=\lim_{\e \downarrow 0} \int_{C_\e(y,z)}
  \Phi \big( Q_t(y,z), \mu_t (y) r(y,z;t)\big)dt\,.
\end{multline}
We now note that
\begin{equation}\label{pietro3}
\sum_{(y,z)\in E}  \int_{C_\e(y,z)}
  \Phi \big( Q_t(y,z), \mu_t (y) r(y,z;t)\big)dt= I_{ F_\e }(\mu,Q)\,,
  \end{equation}
  where
  \[ F_\e(y,z,t):=
  \begin{cases}
  \log \frac{Q_t(y,z)}{\mu_t(y) r(y,z;t) } & \text{ if } t \in C_\e(y,z) \,,\\
  0 & \text{ otherwise}\,.
  \end{cases}
  \]
Given $M>0$ we now define
  \[ F_{M,\e}(y,z,t):=
  \begin{cases}
  -M &\text{ if }\; t \in A(y,z)\,,\\
  \log \frac{Q_t(y,z)}{\mu_t(y) r(y,z;t) } & \text{ if }\; t \in C_\e(y,z) \,,\\
  0 & \text{ otherwise}\,.
  \end{cases}
  \]
  Note that $F_{M,\e}$ is a bounded measurable function.
Since $ \int _{A(y,z)} \mu_t(y) r(y,z; t) dt= \lim _{M\uparrow  \infty} \int _{A(y,z)}  ( 1-e^{-M} )  \mu_t(y) r(y,z; t) dt$, from \eqref{pietro1}, \eqref{pietro2} and \eqref{pietro3} we conclude that
\[I(\mu, Q) = \sum_{(y,z) \in E}
\int_0^{T_0}    \Phi \big( Q_t(y,z), \mu_t (y) r(y,z;t)\big)dt  = \lim _{\e \downarrow 0} I_{F_{1/\e, \e}}(\mu,Q)\,.
\]
Above, we have used the same notation \rosso{as in} \eqref{picasso}, which remains meaningful for bounded measurable functions.
To have \eqref{uff} it is now enough to approximate  $I_{F_{1/\e, \e}}(\mu,Q)$ by  $I_{F}(\mu,Q)$ with $F$ continuous, for any fixed $\e>0$.
To this aim, we recall that by construction $F_{1/\e, \e}$ is a bounded measurable function. Let $\psi_n$ be a sequence of continuous  mollifiers. Then $G_{n,\e} $ defined as the convolution of  $F_{1/\e, \e}$ with $\psi_n$ is a continuous function with $\| G_{n,\e}\|_\infty \leq \| F_{1/\e, \e}\| _\infty$ and such that $G_{n,\e} \to F_{1/\e, \e}$    Lebesgue almost everywhere. By \eqref{picasso} and dominated convergence we then conclude that $\lim _{n \to \infty} I_{G_{n,\e}  }(\mu,Q)= I_{F_{1/\e, \e} } (\mu,Q)$.
 \end{proof}

\section{Proof of Theorem \ref{teo2}: lower bound  \eqref{eq_LB} and goodness of $I$}\label{sec_LB}
\blu{The goodness of the rate functional follows from the exponential tightness in Lemma \ref{teso} and Lemma 4.1.23 in \cite{DZ}.}
Our strategy to prove the lower bound is based on a relative entropy calculation according to the following general result, where $\Ent(\cdot| \cdot )$ denotes the relative entropy of probability distributions.

\begin{lemma}\label{omm}
  Let $\{P_n\}$ be a sequence of probability measures on a Polish space $\mc X$. Assume that for each $x\in\mc X$
  there exists a sequence of probability measures $\{\tilde{P}_n^x\}$
  weakly convergent to $\delta_x$ and such that
  \begin{equation}
    \label{entb}
    \varlimsup_{n\to\infty} \frac 1n \Ent\big(\tilde{P}_n^x \big| P_n\big)
    \le J(x)
  \end{equation}
  for some $J\colon \mc X\to [0,+\infty]$. Then the sequence $\{P_n\}$
  satisfies the large deviation lower bound with rate functional given
  by $\sce J$, the lower semicontinuous envelope of $J$, i.e.
  \begin{equation*}
    (\sce J) \, (x) := \sup_{U \in\mc N_x} \; \inf_{y\in U} \; J(y)
  \end{equation*}
  where $\mc N_x$ denotes the collection of the open neighborhoods of $x$.
\end{lemma}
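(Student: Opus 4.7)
\smallskip
\noindent
\textbf{Plan.} My strategy is the classical ``change of measure / entropy inequality'' argument used to extract an LDP lower bound from relative entropy estimates. The cornerstone is the following standard consequence of Jensen's inequality: if $\nu,\mu$ are probability measures on $\mc X$ with $\nu\ll\mu$ and $A\subset \mc X$ is measurable with $\nu(A)>0$, then
\begin{equation*}
\log \mu(A) \;\ge\; \log\nu(A) \;-\; \frac{1}{\nu(A)}\bigl[\Ent(\nu|\mu) + e^{-1}\bigr].
\end{equation*}
(This follows by applying Jensen to $-\log\frac{d\mu}{d\nu}$ on $A$ conditionally, together with $-x\log x\le e^{-1}$ to discard the contribution of $A^c$.) I would state and quickly justify this inequality as a first step.

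Next, fix an open set $G\subset \mc X$ and a point $x\in G$. Because $\tilde P_n^x\Rightarrow \delta_x$ and $G$ is open, the Portmanteau theorem yields $\liminf_{n\to\infty}\tilde P_n^x(G)\ge \delta_x(G)=1$, hence $\tilde P_n^x(G)\to 1$. Applying the entropy inequality with $\nu=\tilde P_n^x$, $\mu=P_n$ and $A=G$ gives
\begin{equation*}
\frac{1}{n}\log P_n(G) \;\ge\; \frac{1}{n}\log \tilde P_n^x(G) \;-\; \frac{1}{\tilde P_n^x(G)}\left[\frac{1}{n}\Ent(\tilde P_n^x|P_n)+\frac{e^{-1}}{n}\right].
\end{equation*}
Taking $\liminf_{n\to\infty}$ and using $\tilde P_n^x(G)\to 1$ together with the hypothesis \eqref{entb} yields $\varliminf_{n\to\infty}\frac{1}{n}\log P_n(G)\ge -J(x)$.

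Since $x\in G$ was arbitrary, I can take the supremum over $x\in G$ to obtain
\begin{equation*}
\varliminf_{n\to\infty}\frac{1}{n}\log P_n(G)\;\ge\;-\inf_{x\in G}J(x).
\end{equation*}
To conclude, I would verify that for any open $G$ one has $\inf_{x\in G}J(x)=\inf_{x\in G}(\sce J)(x)$: the inequality $\le$ is immediate from $\sce J\le J$, and the reverse follows because for every $y\in G$ the set $G$ is itself an open neighborhood of $y$, so $(\sce J)(y)\ge \inf_{x\in G}J(x)$, and then take the infimum over $y\in G$. This gives the desired LDP lower bound with rate functional $\sce J$.

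\smallskip
\noindent
\textbf{Expected obstacle.} The calculations themselves are routine; the only genuinely delicate point is the entropy inequality (justifying the $e^{-1}$ correction coming from $A^c$ when $\nu(A)<1$) and the identification of $\inf_G J$ with $\inf_G \sce J$ for open $G$. Both are standard but deserve to be stated carefully, since the whole conclusion hinges on being allowed to pass from the ``pointwise'' bound $\varliminf\frac{1}{n}\log P_n(G)\ge -J(x)$ to the lower semicontinuous envelope.
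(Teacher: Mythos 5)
Your proof is correct, and it is the standard argument: the paper itself does not prove Lemma \ref{omm} but refers to \cite{Je} and \cite{Ma}, where essentially this same change-of-measure/Jensen argument (entropy inequality with the $e^{-1}$ correction, Portmanteau for $\tilde P_n^x(G)\to 1$, and the identification $\inf_G J=\inf_G(\sce J)$ on open sets) is used. No gaps.
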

This lemma has been originally proven in \cite[Prop.~4.1]{Je}, see also \cite[Prop.~1.2.4]{Ma}.

\smallskip

We first prove the \rosso{inequality} \eqref{entb} for the functional $J$ defined
as follows.  Let $\Lambda_0\subseteq \Lambda$ be the collection of
elements $(\mu,Q)\in \Lambda$ such that there exists  \blu{$\epsilon>0$}  for which  $\blu{\mu_t(x)\geq \epsilon}$ and $ \blu{\e^{-1} \geq Q_t(y,z)\geq \epsilon}$ for all  \rosso{$t\in \cS_{T_0}$, $x\in V$} and $(y,z)\in E$. We define
$$
J(\mu,Q)=\left\{
\begin{array}{ll}
I(\mu,Q) & \textrm{if}\ (\mu,Q)\in \Lambda_0\,,\\
+\infty & \textrm{otherwise}\,.
\end{array}
\right.
$$
Then we finish the proof of the lower bound showing that $(\sce J)=I$.

Given $(\mu, Q)\in \Lambda_0$ we consider a Markov chain $\tilde{\mathbb P}$ having
jump rates defined by
\begin{equation}\label{uffona}
\tilde r(y,z;t):=\frac{Q_t(y,z)}{\mu_t(y)}\,.
\end{equation}
We observe that \blu{$\e \leq r(y,z;t) \leq \e^{-2}$ and that } $\mu_t$ satisfies the continuity equation
\begin{equation}\label{tildeK}
\partial_t\mu _t+\div \tilde Q^\mu_t =0\,.
\end{equation}
The symbol $\tilde Q^\mu$ in \eqref{tildeK} is defined like in Definition \ref{defi} by $\tilde Q^\mu_t(y,z):=\mu_t(y)\tilde r(y,z;t)$. Trivially,  $\tilde Q^\mu=Q$ and therefore \eqref{tildeK} follows from the definition of $\L_0$. Due to Proposition \ref{ananas} we conclude that $(\mu_t)_{t\geq 0}$  are the marginal distributions  of the oscillatory steady state of the time inhomogeneous Markov chain with $T_0$-periodic jump  rates \eqref{uffona}.


We apply Lemma \ref{omm} considering the sequence $P_n:=\bb P_x \circ (\mu^{(n)},Q^{(n)})^{-1}$ and $\tilde{P}_n^{(\mu,Q)}:=\tilde{\bb P}_x \circ (\mu^{(n)},Q^{(n)})^{-1}$. The convergence $\tilde{P}_n^{(\mu,Q)}\to \delta_{(\mu,Q)}$
follows by Lemma \ref{lln} and the above observation that $\mu_t$ is the marginal of the oscillatory steady state of $\tilde{\mathbb P}$.

We \blu{now} observe that
\begin{equation}\label{euaz}
\frac 1n  \Ent\big(\tilde{P}_n^{(\mu,Q)} \big| P_n\big)\leq \frac 1n \Ent
\rosso{\Big(\tilde{\mathbb P}_x|_{[0,nT_0]}\Big| {\mathbb P}_x|_{[0,nT_0]}\Big)}\,.
\end{equation}
This is a special case of a general result that says that relative entropy is
decreasing under push forward. This follows directly by the variational representation of the \rosso{relative  entropy
(see e.g. \cite[Sec.~8, Appendix~1]{KL}).}
By a direct computation, using \eqref{RN-new}, we have that the right
hand side of \eqref{euaz} is given by
\begin{equation}\label{eus}
\tilde{\mathbb E}_x\left[\mu^{(n)}\left(r-\tilde r\right)+Q^{(n)}\left(\log\frac{\tilde r}{r}\right)\right]\,.
\end{equation}
\blu{Due to the definition of $\L_0$ and by Assumption (A3), the functions $r-\tilde r$ and $\log(\tilde r/r)$ are bounded in modulus. Hence, by the $L^1(\bbP_x)$--convergence in \eqref{mazurka1} and \eqref{mazurka2} in  Proposition \ref{lln}, we get that,} in the limit $n\to +\infty$, \eqref{eus}
converges to
\begin{equation*}
\int_0^{T_0}I_t(\mu_t,Q_t)dt=I(\mu,Q)=J(\mu,Q)\,.
\end{equation*}
\blu{This completes the proof of \eqref{entb}.}

\smallskip
It remains to prove that $(\sce J)=I$. Since $I$ is lower semicontinuous and $I\leq J$, then
by definition we have $(\sce J)\geq I$. We need to prove the converse inequality. Consider
\blu{$(\mu,Q)\in \Lambda$.}
 We construct a sequence $(\mu_n,Q_n)\in \Lambda_0$
such that $(\mu_n,Q_n)\to (\mu,Q)$ and moreover
$\varlimsup_{n\to +\infty}J(\mu_n,Q_n)\leq I(\mu,Q)$. This implies \blu{$(\sce J)(\mu,Q) \leq I(\mu,Q)$} and allows to conclude the proof.
limsup
\smallskip

\blu{We construct the above sequence by a diagonal procedure. To this aim
we let  $\Lambda_1 \subset \L$ be the collection of
elements $(\mu,Q)\in \Lambda$ such that there exists  $\epsilon>0$  for which  $\mu_t(x)\geq \epsilon$ and $ Q_t(y,z)\geq \epsilon$ for  all $t\in \cS_{T_0}$, $x\in V$ and $(y,z)\in E$.  Below we prove the following claim:
\begin{claim}\label{quadrifoglio}
 The following holds:
\begin{itemize}
\item[(i)] For any $(\mu,Q)\in \L$, there exists a sequence  $(\mu_n,Q_n)\in \Lambda_1$
such that $(\mu_n,Q_n)\to (\mu,Q)$ and moreover
$\varlimsup_{n\to +\infty}I(\mu_n,Q_n)\leq I(\mu,Q)$.
\item[(ii)] For any $(\mu,Q)\in \L_1$, there exists a sequence  $(\mu_n,Q_n)\in \Lambda_0$
such that $(\mu_n,Q_n)\to (\mu,Q)$ and moreover
$ \varlimsup_{n\to +\infty}I(\mu_n,Q_n) \leq I(\mu,Q)$.
\end{itemize}
\end{claim}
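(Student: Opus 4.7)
The two items rely on different constructions: for (i) I mix $(\mu,Q)$ with the oscillatory steady state $(\pi,Q^\pi)$ to obtain strict positivity, while for (ii) I convolve in time to obtain a uniform upper bound on $Q_{n,t}(y,z)$. The main obstacle is in (ii): since the rates $r(y,z;t)$ entering $I_t$ are not mollified, a straightforward Jensen estimate produces a mismatch between the time of the rate and the time of the shifted $(\mu,Q)$, which has to be handled using Assumption (A4).

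For (i), I set $(\mu_n,Q_n):=(1-1/n)(\mu,Q)+(1/n)(\pi,Q^\pi)$ with $Q^\pi_t(y,z):=\pi_t(y)r(y,z;t)$. Linearity preserves items (i)--(iii) of Definition \ref{waffel} (using Lemma \ref{conti} for the continuity equation satisfied by $(\pi,Q^\pi)$), and weak convergence $(\mu_n,Q_n)\to(\mu,Q)$ is immediate. To place $(\mu_n,Q_n)$ in $\L_1$ I need $\inf_{t,x}\pi_t(x)>0$ and $\inf_{t,(y,z)\in E}\pi_t(y)r(y,z;t)>0$: strict positivity of $\pi_t(x)$ for every $(t,x)$ follows from Assumption (A2) and strict positivity of $\pi_0$ (invariant distribution of an irreducible finite-state chain), while the continuity equation $\partial_t\pi_t=-\div Q^\pi_t$ combined with the bound on rates in (A3) shows that $t\mapsto\pi_t$ is Lipschitz; compactness of $\cS_{T_0}$ then yields the first uniform lower bound, and (A3) gives the second. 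The $\limsup$ bound follows immediately from the convexity of $I$ (Remark \ref{I_convessa}) and $I(\pi,Q^\pi)=0$ (Remark \ref{t-rex}):
\[
I(\mu_n,Q_n)\leq(1-1/n)I(\mu,Q)+(1/n)I(\pi,Q^\pi)=(1-1/n)I(\mu,Q).
\]

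For (ii), I fix a nonnegative smooth mollifier $\psi_n$ on $\bbR$ with $\int\psi_n=1$ and support shrinking to $\{0\}$, and set
\[
\mu_{n,t}(y):=\int\psi_n(s)\,\mu_{t-s}(y)\,ds,\qquad Q_{n,t}(y,z):=\int\psi_n(s)\,Q_{t-s}(y,z)\,ds,
\]
viewing $\mu,Q$ as $T_0$-periodic. Convolution commutes with $\partial_t$ and $\div$, so the continuity equation is preserved; $\mu_{n,t}(V)=1$ and the lower bounds $\mu_{n,t}(y),Q_{n,t}(y,z)\geq\epsilon$ are inherited; and the bound $Q_{n,t}(y,z)\leq\|\psi_n\|_\infty\int_0^{T_0}Q_u(y,z)\,du<\infty$ places $(\mu_n,Q_n)\in\L_0$. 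Weak convergence is standard. For the $\limsup$ (the case $I(\mu,Q)=+\infty$ being trivial), I apply Jensen's inequality to the jointly convex $\Phi$ to obtain
\[
\Phi\big(Q_{n,t}(y,z),\,\mu_{n,t}(y)r(y,z;t)\big)\leq\int\psi_n(s)\,\Phi\big(Q_{t-s}(y,z),\,\mu_{t-s}(y)r(y,z;t)\big)\,ds,
\]
then integrate in $t$, change variable $u=t-s$ using $T_0$-periodicity, and apply Fubini to get
\[
I(\mu_n,Q_n)\leq\sum_{(y,z)\in E}\int\psi_n(s)\,h_{y,z}(s)\,ds,\quad h_{y,z}(s):=\int_0^{T_0}\Phi\big(Q_u(y,z),\,\mu_u(y)r(y,z;u+s)\big)\,du.
\]
By Assumption (A4), $r(y,z;u+s)\to r(y,z;u)$ for almost every $u$ as $s\to 0$; by Assumption (A3) and $(\mu,Q)\in\L_1$ the quantities $\mu_u(y)r(y,z;u+s)$ stay in a compact subset of $(0,\infty)$ for $(y,z)\in E$, and $\Phi(Q_u(y,z),\mu_u(y)r(y,z;u))$ is integrable in $u$ by the assumption $I(\mu,Q)<\infty$. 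Dominated convergence then gives $h_{y,z}(s)\to h_{y,z}(0)$ as $s\to 0$, whence $\int\psi_n h_{y,z}\,ds\to h_{y,z}(0)$ and $\varlimsup_n I(\mu_n,Q_n)\leq I(\mu,Q)$, completing the claim.
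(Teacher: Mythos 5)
Your proof is correct. Item (i) coincides with the paper's argument: the same convex combination with $(\pi,Q^\pi)$, the same use of convexity of $I$ and $I(\pi,Q^\pi)=0$; your detour through Lipschitz continuity of $t\mapsto\pi_t$ is harmless but unnecessary, since the bound $\pi_t(y)\ge \pi_0(y)\exp\{-\int_0^t r(y;s)\,ds\}$ is already uniform in $t\in[0,T_0]$ by (A3). Item (ii) uses the same construction (time mollification of $\mu$ and $Q$, leaving the rates untouched), but the passage to the limit is genuinely different. The paper splits the integrand of $I$ into five pieces ($Q\log Q$, $-Q\log\mu$, $-Q\log r$, $-Q$, $\mu r$), applies Jensen only to the scalar convex map $u\mapsto u\log u$, and handles the remaining terms by $L^1$ convergence of the mollifications together with the bounds coming from (A3) and from $(\mu,Q)\in\L_1$; this argument never invokes Assumption (A4). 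You instead apply Jensen once to the jointly convex $\Phi$, which is legitimate (e.g.\ via $\Phi(q,p)=\sup_v\{qv-p(e^v-1)\}$), at the cost of creating a time mismatch between the pair $(\mu,Q)$ and the unmollified rate; after the change of variables this becomes the shifted rate $r(y,z;u+s)$, and you remove the mismatch using the a.e.\ continuity of the rates guaranteed by (A4), dominated convergence being justified by the two-sided bounds on $\mu_u(y)r(y,z;u+s)$ from (A3) and $\L_1$ and by the integrability of $\Phi(Q_u,\mu_u r(u))$ when $I(\mu,Q)<\infty$. Your route is shorter, but it consumes (A4) in a step where the paper deliberately avoids it (the paper uses (A4) only in Lemma \ref{fuji}); the paper's term-by-term argument is longer but works under measurability of the rates alone. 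Since (A4) is a standing assumption, both arguments are valid here.
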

The above claim allows to conclude as follows. Let us write $d(\cdot, \cdot)$ for a metric on $\cM_*$ leading to the weak topology on $\cM_*$ (see \eqref{ho_fame}).
Fixed $(\mu,Q)\in \L$, by Item (i) of the above claim, we can find $(\mu_n,Q_n)$ in $\L_1$ with
$d\left( (\mu_n, Q_n), (\mu,Q) \right) \leq 1/n$ and $I(\mu_n,Q_n) \leq I(\mu,Q)+ n^{-1}$. By Item (ii) we can find
$( \mu^*_n,Q^*_n)$ in $\L_0$ with
$d\left( (\mu^*_n, Q^*_n), (\mu_n,Q_n ) \right) \leq 1/n$ and $I(\mu^*_n,Q^*_n) \leq I(\mu_n,Q_n)+ n^{-1}$. Then
$( \mu^*_n,Q^*_n) \to (\mu,Q)$ and $ \varlimsup_{n\to +\infty}I(\mu^*_n,Q^*_n) \leq I(\mu,Q)$.  Using that $I(\mu^*_n,Q^*_n) =J(\mu^*_n,Q^*_n) $ by definition of $J$, we conclude that  $ \varlimsup_{n\to +\infty}J(\mu^*_n,Q^*_n) \leq I(\mu,Q)$.
}

\medskip
\blu{The rest of this section is devoted to the proof of Claim \ref{quadrifoglio}.}

\subsection{\blu{Proof of Item (i) in Claim \ref{quadrifoglio}}}
\blu{Let $(\mu,Q)\in \L$.}
The sequence $(\mu_n,Q_n)$ is defined as
$$
(\mu_n,Q_n):=\frac 1n (\pi,Q^\pi)+\left(1-\frac 1n\right)(\mu,Q)\,.
$$
We point out that $\pi_t(y)$ can be estimated from below by the probability that $\xi_0=y$ and  that the Markov chain does not jump in the time interval $[0,t]$. Hence
\[ \pi_t (y) \geq \pi_0 (y) \exp \{ -\int _0^t r(y;s) ds \}\,.\]
Due to Assumption (A3) (cf. \eqref{1maggio}) and since $\pi_0$ is a positive measure, we conclude that $\min _y \inf_{t \in  [0,T_0]} \pi_t (y) >0$. As a byproduct of this bound and again \eqref{1maggio} we also conclude that  $Q^\pi_t (y,z)= \pi_t (y) r(y,z ;t)$  is bounded from below by a positive constant uniformly in $(y,z)\in E$ and $t \in [0,T_0]$. These observations imply that $( \pi, Q^\pi) \in \L_0$ and therefore that  $(\mu_n,Q_n)\in \L_0$. \blu{Trivially, $(\mu_n,Q_n)\to (\mu,Q)$ in $\cM_*$.}

Since $I$ is convex (cf. Remark \ref{I_convessa}) and $I(\pi,Q^\pi)=0$ we have
$$
\blu{I(\mu_n,Q_n)\leq \left(1-\frac 1n\right)I(\mu,Q)\,,}
$$
\blu{which implies that $\varlimsup_{n\to +\infty}I(\mu_n,Q_n) \leq I(\mu,Q)$.}
%
\subsection{\blu{Proof of Item (i) in Claim \ref{quadrifoglio}}}

\blu{Let $(\mu,Q)\in \L_1$ and let $\e>0$ be
such that   $\mu_t(x)\geq \epsilon$ and $ Q_t(y,z)\geq \epsilon$ for  all $t\in \cS_{T_0}$, $x\in V$ and $(y,z)\in E$.
 To built $(\mu_n,Q_n)$ we fix a sequence of nonnegative $C^\infty$--mollifiers  $\varphi_n$   with  support in $[-1/n,1/n]$ (see \cite{Brezis}).
We write $(\mu_n,Q_n)$ for the element in $\cM_*$ (cf. \eqref{ho_fame}) such that $\mu_n = [\mu_n]_t dt $, $Q_n=[Q_n]_t dt $ and
\begin{align}
&  [\mu_n]_t  (y) := \int _{\bbR} \mu_s(y) \varphi_n(t-s)ds\,,\label{defilare1}\\
& [Q_n]_t(y,z):= \int _{\bbR} Q_s(y,z) \varphi_n(t-s)ds\,.\label{defilare2}
\end{align}
Since the maps $t\mapsto \mu_t(y)$ and $t\mapsto Q_t(y,z)$ are in $L^1(dt):=L^1(\cS_{T_0},dt)$, we have that the mollified
  maps $t\mapsto [\mu_n]_t(y)$ and $t\mapsto [Q_n]_t(y,z)$ are $C^\infty$   and moreover converge
 in $L^1( dt)$, as $n\to \infty$, to  $t\mapsto \mu_t(y)$ and $t\mapsto Q_t(y,z)$  respectively (see e.g. \cite[Chp. 4]{Brezis}).   }

\blu{Let us first prove that $(\mu_n,Q_n) \in \L_0$.   It is simple to check that  $(\mu_n,Q_n) \in \L_1$ since $(\mu,Q)\in \L_1$. Note in particular that they solve the continuity equation and that  $[\mu_n]_t(y) \geq \e$ and $[Q_n]_t(y,z) \geq \e$. On the other hand, as already observed,    $[\mu_n]_t$ and $[Q_n]_t$ depend smoothly  on $t$, hence they are bounded from above. This conclude the proof that   $(\mu_n,Q_n) \in \L_0$}.

\blu{It remains to prove that $\varlimsup_{n\to \infty} I(\mu_n,Q_n ) \leq I(\mu,Q)$.
Since $(\mu, Q)\in \L_1$ and due to Assumption (A3),  we have  that the following $t$--functions
\begin{equation}\label{fuoco}Q_t(y,z) \,,  \qquad   Q_t(y,z) \log \mu_t(y) \,,\qquad   Q_t(y,z) \log r_t(y,z) \,, \qquad \mu_t(y) r(y,z;t)\end{equation}
belong to $L^1(dt)$.  Hence, $I(\mu,Q)$ can be written as  the sum among $(y,z) \in E$ of the following $(y,z)$--parameterized expressions (which are  meaningful since all terms below, with exception of at most one, are finite):
\begin{equation}\label{fuoco1001}
\begin{split}
& \int_{\cS_{T_0}}Q_t(y,z) \log Q_t(y,z)  dt   -\int_{\cS_{T_0}} Q_t(y,z) \log \mu_t(y)  dt \\
&-\int_{\cS_{T_0}} Q_t(y,z) \log r(y,z;t)  dt   -\int_{\cS_{T_0}}Q_t(y,z)   dt +\int_{\cS_{T_0}}\mu_t(y) r(y,z;t) dt\,.
\end{split}
\end{equation}
Since the map $(0,+\infty) \ni u \mapsto u \log u \in \bbR$ is convex and since the mollification is an average, we have
\[
[ Q_n]_t(y,z) \log [Q_n]_t(y,z)  \leq \int _{\bbR} \varphi_n(s)  Q_{t-s}(y,z) \log Q_{t-s}(y,z)ds \,.
\]
Hence,
\begin{equation}\label{abc1}
\begin{split}
 \int_{\cS_{T_0}} [ Q_n]_t(y,z) \log [Q_n]_t(y,z)  dt & \leq \int_{\cS_{T_0}}  dt \int _{\bbR} \varphi_n(s)  Q_{t-s}(y,z) \log Q_{t-s}(y,z)ds\\
 & = \int_{\cS_{T_0}}   Q_t(y,z) \log Q_t(y,z) dt\,.
\end{split}
\end{equation}
On the other hand, due to Assumption (A3) and the properties of mollifiers  stated after  \eqref{defilare2}, we have the  following limits in  $L^1(dt)$:
\begin{align}
& | [ Q_n]_t(y,z)-Q_t(y,z) | \to 0 \,,\label{abc2}\\
& | [ Q_n]_t(y,z)-Q_t(y,z) |  |\log r(y,z;t)|   \to 0 \,,\label{abc3}\\
& | [ \mu_n]_t(y)-\mu_t(y) |  r(y,z;t) \to 0\label{abc4} \,.
\end{align}
Finally,  we estimate
\begin{equation}\label{olivia100}
\begin{split}
& \big{|} [ Q_n]_t(y,z) \log [ \mu_n]_t(y)- Q_t(y,z)\log \mu_t(y)\big{|}
\\
& \leq \big{|} [ Q_n]_t(y,z) - Q_t(y,z)\big{ |}  \cdot \big|\log [ \mu_n]_t(y)\big |+  Q_t(y,z)\big | \log  \mu_t(y) -  \log  [ \mu_n]_t(y)  \big |\\
& \leq  |\log \e|\cdot \big{|} [ Q_n]_t(y,z) - Q_t(y,z)\big{ |}  +  Q_t(y,z)\big | \log \mu_t(y)-  \log [ \mu_n]_t(y) \big |\,.
\end{split}
\end{equation}
We already know that the first term in the r.h.s. goes to zero in $L^1(dt)$ (cf. \eqref{abc2}). On the other hand we can bound
\begin{equation}\label{cocomero}
| Q_t(y,z)\big | \log \mu_t(y)-  \log [ \mu_n]_t(y) \big | \leq   2 Q_t(y,z) |\log \e| \in L_1(dt)
\end{equation}
Since $t\mapsto [ \mu_n]_t(y)$ converges to $t\mapsto  \mu_t(y)$ in $L^1(dt)$, at cost to extract a subsequence we can suppose that the convergence is also Lebesgue almost everywhere. As a byproduct with \eqref{cocomero}, by dominated convergence, we conclude that also the second term  in the r.h.s. of \eqref{olivia100} goes to zero in $L_1(dt)$, thus implying the limit
\begin{equation}\label{abc5}
\big{|} [ Q_n]_t(y,z) \log [ \mu_n]_t(y)- Q_t(y,z)\log \mu_t(y)\big{|}\to 0 \text{ in $L_1(dt)$}\,.
\end{equation}
To conclude we write
$I(Q_n,\mu_n)$ as  the sum among $(y,z)\in E$ of
\begin{equation}\label{fuoco1000}
\begin{split}
& \int_{\cS_{T_0}} [ Q_n]_t(y,z) \log [Q_n]_t(y,z)  dt   -\int_{\cS_{T_0}} [ Q_n]_t(y,z) \log [\mu_n]_t(y)  dt \\
&-\int_{\cS_{T_0}} [ Q_n]_t(y,z) \log r(y,z;t) dt   -\int_{\cS_{T_0}} [ Q_n]_t(y,z)   dt +\int_{\cS_{T_0}} [\mu_n]_t(y) r(y,z;t) dt\,.
\end{split}
\end{equation}
Note that all the above integrals are finite since $(\mu_n,Q_n) \in \L_0$ and due to Assumption (A3). By \eqref{abc1} the limsup of the first addendum in \eqref{fuoco1000} is bounded from above by the first addendum in \eqref{fuoco1001}, while by
 using respectively  \eqref{abc5}, \eqref{abc3}, \eqref{abc2}, \eqref{abc4}, we get that   the limits of the other addenda in \eqref{fuoco1000}  are given by the similar addenda in \eqref{fuoco1001}. This conclude the proof that $\varlimsup_{n \to \infty} I(\mu_n,Q_n) \leq I(\mu,Q)$.}


\section{Proof of Theorem \ref{teo3}}\label{asia}
Recall  the continuous map $\cJ:\cM_+ ( E \times \cS_{T_0}) \to \cM _a ( E_s \times \cS_{T_0}) $
defined as $ \mathcal J(Q)(y,z,A):=Q(y,z,A)-Q(z,y,A)$, with the convention that $Q(y',z',A)=0$ if $(y',z')\not \in E$.
Due to the discussion preceding Definition \ref{deficit} it  only remains to show that the function
\[ \hat I(\mu,J):= \inf_{Q: \mathcal J(Q)=J}I(\mu,Q)\]
\blu{is convex and} equals the r.h.s. of \eqref{piscina}, \blu{and to derive \eqref{cuore}.}

\blu{ Convexity follows from  the convexity of $I$ and the affinity  of $\cJ$. Let us prove that $\hat I(\mu,J)$ equals the r.h.s. of \eqref{piscina}.}
Trivially, if $J=\cJ (Q) $ with $(\mu,Q)\in \L$, then $(\mu,J)\in \L_a$. \blu{On the other hand,  if $(\mu,J)\in \L_a$ then
$J=\cJ (Q) $ where $Q_t(y,z):=\max\{ J_t(y,z), 0\}$, in particular $(\mu,Q)\in \L$}.
 Since $I\equiv +\infty$ on $\L^c$, we conclude that
$\hat I(\mu,J)=+\infty$ if $(\mu,J) \not \in \L_a$, in agreement with the r.h.s. of \eqref{piscina}. Hence, from now on we  restrict to $(\mu,J) \in \L_a$.

\medskip

Given a current $J\in  \cM _a ( E_s \times \cS_{T_0}) $ we can write it uniquely in its Jordan decomposition $J = J^+ - J^-$. We recall that  $J^\pm$  are nonnegative  measures in $\cM_+ ( E_s \times \cS_{T_0}) $  with  disjoint supports. The antisymmetry of $J$ implies that
\[J^+(y,z,A) = J^-(z,y,A) \qquad \forall A\subset \cS_{T_0} \text{ measurable}\,.
\]
\blu{Since we restrict to $(\mu,J)  \in \L_a$, we have $J^+= J^+_t  dt$ and $J^-=J^-_t dt $, where
$J^+_t (y,z):= \max\{ J^+_t (y,z),0\}$ and $J^-_t (y,z):=- \min\{ J^+_t (y,z),0\}$.}
 Note that by property  (v) in Definition \ref{deficit} of $\L_a$, $J^+$ \blu{and $J^-$ have} support included in $E\times \cS_{T_0}$.

   All the flows $Q\in\cM _+ ( E \times \cS_{T_0}) $   such that  $\cJ(Q) = J$ can be characterized by the decomposition  $Q = J^+ +S$, where $S$ is an arbitrary  element of $\cM_+(E\times \cS_{T_0} )$ such that
   \[
\begin{cases}
 S( y,z,A) = S(z,y,A) & \text{ if }\;(y,z)\in E \text{ and } (z,y) \in E\,,\\
 S(y,z,A)=0 & \text{ if }\; (y,z)\in E \text{ and }  (z,y)\not \in E\,.
 \end{cases}
 \]

\begin{definition}\label{renzino}
We denote by  $\cS = \cS(\mu)$ the space of measures $S\in\cM _+ ( E \times \cS_{T_0}) $ such that  $S=S_t dt $, $S_t \in \bbR_+ ^E$,
\[
\begin{cases}
 S_t( y,z) = S_t(z,y) & \text{ if }\;(y,z)\in E \text{ and } (z,y) \in E\,,\\
 S_t(y,z)=0 & \text{ if }\; (y,z)\in E \text{ and }  (z,y)\not \in E\,.
 \end{cases}
 \]
 and, given $(y,z) \in E$, if $\mu_t (y)=0$ then $S_t(y,z)=0$ for a.e. \rosso{$t\in \cS_{T_0}$}.
\end{definition}
Recall that we restrict to $(\mu,J) \in \L_a$.  By the previous observations,
the flows $Q$ such that $(\mu,Q) \in \L$ and $\cJ(Q) = J$ are  characterized by the decomposition  $Q = J^+ +S$, where $S \in \cS$.

 Due to the previous observations we have
\begin{equation}\label{hosonno}
\hat I(\mu,J)= \inf_{S\in \cS }I(\mu,J^++S)
=  \inf_{S\in \cS}  \sum_{(y,z)\in E}  \int_0^{T_0}   \Phi \big( J^+_t(y,z)+S_t(y,z),  \mu_t(y) r_t(y,z)\big) dt  \,,
\end{equation}
where the infimum is among the symmetric elements $S$ as above. Note that  we have set $r_t(y,z):=r(y,z;t)$.
To solve the variational problem
\eqref{hosonno} it is enough to minimize for each $t$ and for each $(y,z)\in E$ the contribution in the r.h.s. of \eqref{hosonno} of  the \rosso{terms} associated to $(y,z)$ and to $(z,y)$ (if $(z,y)\in E$, otherwise one restricts only  to the \rosso{term} associated to $(y,z)$).

To this aim,  given $(v,w)\in E$ we set
\begin{equation}\label{bakeoff}
Q^{J,\mu}_t(v,w):=\frac{J_t(v,w)+\sqrt{J_t^2(v,w)+4\mu_t(v)\mu_t(w)r_t(v,w)r_t(w,v)}}{2}\,.
\end{equation}

\medskip

{\bf Case 1}. For  $ (y,z)\in E $ with $  (z,y)\not \in E$ we know that $S_t(y,z)=0$ \blu{and} $ J^+_t(y,z)=J_t(y,z)$ \blu{(see Definition \ref{deficit}-(v))}. Therefore, for all
$ t\in [0,T_0]$, we have
\begin{equation*}
\Phi \big( J^+_t(y,z)+S_t(y,z),  \mu_t(y) r_t(y,z)\big) = \Phi \big( Q^{J,\mu}_t(y,z)  ,  \mu_t(y) r_t(y,z)\big)\,.
\end{equation*}

{\bf Case 2}. Let us now take $(y,z)\in E $ such that $ (z,y) \in E$.  It is enough to minimize, for each $t\in [0,T_0]$, the contribution
\begin{equation}\label{rubik}
\Phi \big( J^+_t(y,z)+S_t(y,z),  \mu_t(y) r_t(y,z)\big)+\Phi \big( J^+_t(z,y)+S_t(z,y),  \mu_t(z) r_t(z,y)\big)\,,
\end{equation}
when varying the parameter $S_t(y,z)=S_t(z,y)$ in $\bbR_+$.
We define
\[ s_t:= S_t(y,z)= S_t(z,y)\,,\qquad j_t^+:= J^+_t (y,z)\,,\qquad j_t^-:= J_t^-(y,z)=\rosso{J^+(z,y)}\,.
\]

{\bf Subcase 2.a}. Supposing $ \mu_t(y)>0$ and $\mu_t(z)>0$,  by definition of $\Phi$ we have to minimise
(cf. \eqref{rubik})
\begin{equation}\label{lavoro}
\begin{split}
 \inf_{s_t\in \bbR_+ }\Big\{ \left(j^+_t+s_t\right)\log\frac
{\left(j^+_t+s_t\right)}{\mu_t(y)r_t(y,z)}& +\left(j^-_t+s_t\right)\log\frac
{\left(j^-_t+s_t\right)}{\mu_t(z)r_t(z,y)} \\
&+\mu_t(y)r_t(y,z)+\mu_t(z)r_t(z,y)-j^+_t-j^-_t -2 s_t\Big\}\,.
\end{split}
\end{equation}
By simple computations one gets that the minimizer is given by
$$
s_t=\frac{-\left(j_t^+ +j^-_t \right)+\sqrt{ (j_t^+-j_t^-)^2+4\mu_t(y)\mu_t(z)r_t(y,z)r_t(z,y)}}{2}\,.
$$
We point out that $s_t >0 $ since $\min(j_t^+ ,j_t^{-})=0$. It then follows that the infimum in \eqref{lavoro} equals
\begin{equation}
\label{girasole}
\Phi \big( Q^{J,\mu}_t(y,z),  \mu_t(y) r_t(y,z)\big)+\Phi \big( Q^{J,\mu}_t(z,y),  \mu_t(z) r_t(z,y)\big) \,.\end{equation}

{\bf Subcase 2.b}. If   $ \mu_t(y)=0$ and   $\mu_t(z)>0$, then by Property (iv) in Definition \ref{deficit}  and by Definition \ref{renzino} of $\cS$ for a.e. $t$ we have $j_t^+=0=s_t$. In this case, for a.e. $t$ the contribution \eqref{rubik} equals
\begin{equation}
j^-_t \log\frac
{j^-_t}{\mu_t(z)r_t(z,y)} +\mu_t(z)r_t(z,y)-j^-_t\,, \end{equation}
which again equals
 \eqref{girasole}.

{\bf \rosso{Subcases} 2.c, 2.d}. If   $ \mu_t(y)>0$ and   $\mu_t(z)=0$, or  $ \mu_t(y)=0$ and   $\mu_t(z)=0$, one gets that $s_t=0$ and the contribution \eqref{rubik} equals \eqref{girasole} by the same arguments used in Subcase 2.b.

Collecting all the above cases from Case 1 to Case 2.d , we get that
\begin{equation}\label{solare}
\hat I(\mu,J)=\int_0^{T_0}I_t(\mu_t, Q^{J,\mu}_t)\,dt
\end{equation}
\blu{for any $(\mu,J) \in \L_a$. This concludes the proof of \eqref{piscina}.}

Finally, the derivation of \eqref{cuore}  from the above formula can be done as  in \cite{BFG2} (cf. Theorem 6.1 there) by
\rosso{adapting} the conclusion there. Let us give more comments. \blu{Take $(\mu, J) \in \L_a$}. As for \cite[Eq. (6.6)]{BFG2} we have
\begin{multline*}
  \Psi \left( J_t(y,z), J_t^\mu(y,z);a_t ^\mu(y,z) \right)= \\
\Phi \left(
Q^{J,\mu}_t(y,z) , \mu_t (y) r(y,z;t) \right)+ \Phi \left(
Q^{J,\mu}_t(z,y) , \mu_t (z) r(z,y;t) \right)
\end{multline*}
if both $(y,z)$ and $(z,y)$ belong to $E$.  Hence in this case we have
\begin{multline}\label{maurizio}
\frac{1}{2}\left\{   \Psi \left( J_t(y,z), J_t^\mu(y,z);a_t ^\mu(y,z) \right)+  \Psi \left( J_t(z,y), J_t^\mu(z,y);a_t ^\mu(z,y) \right)
\right\}=\\
\Phi \left(
Q^{J,\mu}_t(y,z) , \mu_t (y) r(y,z;t) \right)+ \Phi \left(
Q^{J,\mu}_t(z,y) , \mu_t (z) r(z,y;t) \right)\,.
\end{multline}

Let us now suppose that $(y,z) \in E$ and $(z,y )\not \in E$. Then it must be  $J_t(y,z)  = Q^{J,\mu}_t(y,z)  \geq  0$ and
$ J^\mu_t(y,z)= \mu(y) r_t (y,z)\geq 0 $. Since $a^\mu_t (y,z)=0$ we have
 \[  \Psi \left( J_t(y,z), J_t^\mu(y,z);a_t ^\mu(y,z) \right)=\Phi \left( Q^{J,\mu}_t(y,z) , \mu_t (y) r(y,z;t) \right)\,.\]
 On the other hand,  we have $a^\mu_t (z,y)=0$,  $J_t(z,y)=-J_t(y,z)\leq 0 $ and $ J^\mu_t(z,y)=-J^\mu_t(y,z)\leq 0 $. Hence, by definition of $\Psi$, we have
 \[  \Psi \left( J_t(z,y ), J_t^\mu(z,y);a_t ^\mu(z,y ) \right)= \Psi \left( J_t(y,z), J_t^\mu(y,z);a_t ^\mu(y,z) \right)\,.\]
 Since moreover  $\Phi \bigl(
Q^{J,\mu}_t(z,y) , \mu_t (z) r(z,y;t) \bigr)= \Phi(0,0)=0$, also in this case we have \eqref{maurizio}. By symmetry we conclude that \eqref{maurizio} holds for any $(y,z) \in E_s$.
As a byproduct of the above observation, \eqref{rfq} and  \eqref{solare}, we get \eqref{cuore}.

\medskip

We conclude by discussing goodness and convexity of $\hat I$. Goodness follows from the goodness of $I$ by application of the contraction principle. On the other hand, by \eqref{arancione}, $\hat I(\mu,Q)$ equals the infimum of the convex rate functional $I$ on a suitable affine subspace, thus implying that $\hat I$ itself is convex.

\section{Proof of Theorem \ref{teo_GC1}   }\label{pizza}
In what follows, as done before,  we use the convention $0 \log 0 :=0$.
\subsection{Proof of \eqref{uva1}}
 Since both  $(\mu,Q)$ and $(\theta \mu, \theta Q) $ belong to $\L$   we can write
\begin{equation*}
\begin{split}
 I(\theta \mu,\theta Q;r)- I(\mu,Q;r)
& = \sum_{y,z}  \int_{0}^{T_{0}}ds
 \left[\begin{array}{c}
-Q_{s}(y,z)\log\ \frac{Q_{s}(y,z)}{\mu_{s}(y)r(y,z;s)}\\
+Q_{T_{0}-s}(z,y)\log \frac{Q_{T_{0}-s}(z,y)}{\mu_{T_{0}-s}(y)r (y,z;s)}
\end{array}\right]
\\&
 +\sum_y  \int_{0}^{T_{0}}ds \left[
\begin{array}{c}
\sum_z Q_{s}(y,z) -\mu_{s}(y)r(y;s )\\
-\sum _z Q_{T_{0}-s}  (z,y)+\mu_{T_{0}-s}(y)r(y;s)
\end{array}\right]\\
&
 = \sum_{y,z}  \int_{0}^{T_{0}}ds
 \left[\begin{array}{c}
-Q_{s}(y,z)\log\ \frac{Q_{s}(y,z)}{\mu_{s}(y)r(y,z;s)}\\
+Q_{s}(z,y)\log \frac{Q_{s}(z,y)}{\mu_{s}(y)r (y,z;T_0-s)}
\end{array}\right]
\\&
 +\sum_y  \int_{0}^{T_{0}}ds \left[
\begin{array}{c}
\sum_z Q_{s}(y,z) -\mu_{s}(y)r(y;s )\\
-\sum _z Q_{s} (z,y)+\mu_{s}(y)r(y;T_0-s)
\end{array}\right]\,.
 \end{split}
\end{equation*}
Therefore we have
\begin{multline}\label{pigna1}
 I(\theta \mu,\theta Q;r)- I(\mu,Q;r)
 = \\
  \int_{0}^{T_{0}}ds \Big[\sum_{y,z}
Q_{s}(y,z)\log  \frac{\mu_{s}(y)r (y,z;s )}{\mu_{s}(z)r(z,y;T_{0}-s)}+\sum_{y}\mu_{s}(y)\left(-r(y;s)+r(y;T_{0}-s)\right)\Big]\,.
 \end{multline}
On the other hand we have
\[
\int_{0}^{T_{0}}ds\sum_{y,z}Q_{s}(y,z)\log \frac{\mu_{s}(y)}{\mu_{s}(z)}=\int_{0}^{T_{0}}ds\sum_{y}\log\bigl(\rosso{\mu_{s}(y)}\bigr) \sum_{z}\left(Q_{s}(y,z)-Q_{s}(z,y)\right)\,.
\]
Using now the
continuity equation $\partial_{s}\mu_{s}(y)+\sum_{z}\left[Q_{s}(y,z)-Q_{s}(z,y)\right]=0$,
we obtain
\begin{equation}\label{pigna2}\int_{0}^{T_{0}}ds\sum_{y,z}Q_{s}(y,z)\log \frac{\mu_{s}(y)}{\mu_{s}(z)}
=\rosso{-}\sum_y\int_{0}^{T_{0}}ds\log \bigl(\mu_{s}(y)\bigr) \partial_{s}\mu_{s}(y)=0\,.
\end{equation}
As a byproduct of  \eqref{pigna1}  and \eqref{pigna2} one gets \eqref{uva1}.

\subsection{Proof of \eqref{uva2}}
Since $(\mu,Q)\in \L$ we can write
\begin{multline*}
 I(\theta \mu,\theta Q;r^{\rm R}) - I(\mu,Q;r)
 = \sum_{y,z}  \int_{0}^{T_{0}}ds
 \left[\begin{array}{c}
\begin{array}{c}
-Q_{s}(y,z)\log  \frac{Q_{s}(y,z)}{\mu_{s}(y)r(y,z;s)} \end{array}\\
+Q_{T_{0}-s}(z,y)\log \frac{\rosso{Q_{T_0-s}}(z,y)}{\mu_{T_{0}-s}(y)r (y,z;T_{0}-s)}
\end{array}\right]\\
 +\sum _y \int_{0}^{T_{0}}ds
 \left[\begin{array}{c}
\sum_{z}Q_{s}(y,z)-\mu_{s}(y)r(y;s)\\
\begin{array}{c}
-\sum_{z}Q_{T_{0}-s}(z,y)+\mu_{T_{0}-s}(y)r(y;T_{0}-s)\end{array}
\end{array}\right]\,.
 \end{multline*}
 By a local  change of variable $T_0-s\mapsto s$   the last expression
 in the r.h.s. is zero, while the first expression can be simplified. This leads to
 \begin{equation*}
 I(\theta \mu,\theta Q;r^{\rm R})- I(\mu,Q;r)  =\sum_{y,z} \int_{0}^{T_{0}}ds\,Q_{s}(y,z)\log \frac{\mu_{s}(y)r(y,z;s)}{\mu_{s}(z)r (z,y;s)} \,.
 \end{equation*}
By \eqref{pigna2} we can write the above r.h.s. as $\sum_{y,z} \int_{0}^{T_{0}}ds\,Q_{s}(y,z)\log \frac{r(y,z;s)}{r (z,y;s)}= \blu{S_{\rm tot}(Q;r) }$.

\subsection{Proof of \eqref{uva3}} Since $(\mu,Q) \in \L$ we can write
\begin{equation*}
\begin{split}
 I(\theta \mu, \theta Q; r^{\rm DR})&-I(\mu,Q; r)
 \\ &
 = \sum _{y,z} \int_0^{T_0}ds
\left[\begin{array}{c}
-Q_{s}(y,z)\log \frac{Q_{s}(y,z)}{\mu_{s}(y)r (y,z;s )}\\
+Q_{T_{0}-s}(z,y)\log \frac{Q_{T_{0}-s}(z,y)}{\mu_{T_{0}-s}(y)w_{T_{0}-s}^{-1}(y)r (z,y;T_{0}-s)w_{T_{0}-s}(z)}
\end{array}\right]\\
&+ \sum_y \int_0 ^{T_0} ds
\left[\begin{array}{c}
\sum_{z}Q_{s}(y,z)-\mu_{s}(y)r(y;s)\\
-\sum_{z}Q_{T_{0}-s}(z,y) +\mu_{T_{0}-s}(y)r(y;T_{0}-s)
\end{array}\right]\\
& = \sum _{y,z} \int_0^{T_0}ds
\left[\begin{array}{c}
-Q_{s}(y,z)\log \frac{Q_{s}(y,z)}{\mu_{s}(y)r (y,z;s )}\\
+Q_{s}(z,y)\log \frac{Q_{s}(z,y)}{\mu_{s}(y)w_{s}^{-1}(y)r (z,y;s)w_{ s}(z)}
\end{array}\right]\\
&+ \sum_y \int_0 ^{T_0} ds
\left[\begin{array}{c}
\sum_{z}Q_{s}(y,z)-\mu_{s}(y)r(y;s)\\
-\sum_{z}Q_{s}(z,y) +\mu_{s}(y)r(y;s)
\end{array}\right]\\
&= \int_{0}^{T_{0}}ds \sum_{y,z}
Q_{s}(y,z)\log \frac{\mu_{s}(y)w_{s}^{-1}(y)}{\mu_{s}(z)w_{s}^{-1}(z)}\\
& = \int_{0}^{T_{0}}ds \sum_{y,z}
Q_{s}(y,z)\log \frac{w_{s}(z)}{w_{s}(y)}\,.
\end{split}
\end{equation*}
We point out that  the second identity follows from a local  chance of variable $s\mapsto T_0-s$, while the forth  identity follows from \eqref{pigna2}.

By using the continuity equation $\partial_{s}\mu_{s}(z)=\sum_{y}\left[Q_{s}(y,z)-Q_{s}(z,y)\right]$ and  integrating by parts, we conclude the proof of \eqref{uva3} by observing that
\begin{multline*}
 \int_{0}^{T_{0}}ds \sum_{y,z} Q_{s}(y,z)\log \frac{w_{s}(z)}{w_{s}(y)}  =\int_{0}^{T_{0}}ds\sum_{z}\log \bigl( w_{s}(z)\bigr)\sum_{y}\left(Q_{s}(y,z)-Q_{s}(z,y)\right)\\
  = \sum_{z}\int_{0}^{T_{0}}ds \log \bigl( w_{s}(z)\bigr) \partial_{s}\mu_{s}(z)=
  - \sum_{z}\int_{0}^{T_{0}}ds\mu_{s}(z)
  \partial_{s}  \log \bigl( w_{s}(z)\bigr)
  = S_{\rm ex} (\mu;r)\,.
\end{multline*}

\subsection{Proof of \eqref{rio1}, \eqref{rio2} and \eqref{rio3}}
These last three identities  \rosso{follow} by  minimizing \eqref{uva1}, \eqref{uva2}, \eqref{uva3},  respectively. One needs to  observe that the map
$(\mu, Q) \mapsto (\theta \mu, \theta Q )$ is a bijection \rosso{on} $\L$ and to use  the identities
$S_{\rm naive}(\theta \mu,\theta Q;r)=-S_{\rm naive}(\mu,Q;r)$,  $S_{\rm tot} (\theta Q; r^{\rm R})=- S_{\rm tot} (Q;r )$,
$S_{\rm ex} (\theta \mu; r^{\rm DR} )=-S_{\rm ex} (\mu;r) $.  For the last  identity we observe that the accompanying measure $w^{\rm DR}_s$ associated to the rates $r^{\rm DR}( \cdot, \cdot;s)$ equals $w_{T_0-s}$.

\bigskip

{\bf Acknowledgements}.
\rosso{We thank the  anonymous referees for their  careful reading and suggestions}.
 A. Faggionato and D. Gabrielli  thank   the Institute Henri Poincar\'e   for the kind hospitality and  the support during the trimester  ``Stochastic Dynamics Out of Equilibrium'',  in which  they have  worked on the manuscript.

\bigskip




\end{document}